 \newtheorem{theorem}{Theorem}
 \newtheorem{corollary}[theorem]{Corollary}
 \newtheorem{lemma}[theorem]{Lemma}
 \newtheorem{proposition}[theorem]{Proposition}
\theoremstyle{definition}
 \newtheorem{remark}{Remark}
\newcommand{\N}{\ensuremath{\mathbb N}} 
\newcommand{\R}{\ensuremath{\mathbb R}} 
\newcommand{\norm}[1]{\left\lVert#1\right\rVert}
\newcommand{\E}{\ensuremath{\mathbb E}}
\title[Spectral CLT]{Spectral central limit theorem for additive functionals of isotropic and stationary Gaussian fields}
\author{Leonardo Maini}
\address{Leonardo Maini, Universit\'e du Luxembourg, 
Unit\'e de Recherche en Math\'ematiques,
Maison du Nombre,
6 avenue de la Fonte,
L-4364 Esch-sur-Alzette,
Grand Duch\'e du Luxembourg}
\email{leonardo.maini@uni.lu} 
\thanks{}
\author{Ivan Nourdin}
\address{Ivan Nourdin, Universit\'e du Luxembourg, 
Unit\'e de Recherche en Math\'ematiques,
Maison du Nombre,
6 avenue de la Fonte,
L-4364 Esch-sur-Alzette,
Grand Duch\'e du Luxembourg}
\email{ivan.nourdin@uni.lu} 
\thanks{}
\date{\today}
\begin{document}
\maketitle

\medskip

\begin{abstract}
Let $B=(B_x)_{x\in\R^d}$ be a collection of $N(0,1)$ random variables forming a real-valued continuous stationary Gaussian field on $\R^d$,
and 
set $C(x-y)=\E[B_xB_y]$. Let $\varphi:\R\to\R$ be such that $\E\left[\varphi(N)^2\right]<\infty$ with $N\sim N(0,1)$, let $R$ be the Hermite rank of $\varphi$, and consider
$Y_t = \int_{tD} \varphi(B_x)dx$, $t>0$ with $D\subset \R^d$ compact.

Since the pioneering works from the 80s by Breuer, Dobrushin, Major, Rosenblatt, Taqqu and others, central and noncentral limit theorems for $Y_t$ 
have been constantly refined, extended
and applied to an increasing number of diverse situations, to such an extent that it has become a field of research in its own right.

The common belief, representing the intuition that specialists in the subject have developed over the last four decades, is
that as $t\to\infty$ the fluctuations of $Y_t$ around its mean are, in general (i.e. except possibly in very special cases), Gaussian when $ B $ has short memory,
and non Gaussian when $B$ has long memory and $R\geq 2$.

We show in this paper that this intuition forged over the last forty years can be wrong, and not only marginally or in critical cases.
We will indeed bring to light a variety of situations where $ Y_t $ admits Gaussian fluctuations in a long memory context.

To achieve this goal, we state and prove a spectral central limit theorem, which extends the conclusion of the celebrated Breuer-Major theorem to situations where $C\not\in L^R(\R^d)$. 
Our main mathematical tools are the Malliavin-Stein method and Fourier analysis techniques.\\
\\
{ {\it Keywords: }  Spectral central limit theorem, stationary Gaussian fields, isotropic Gaussian fields, long memory, short memory, Malliavin-Stein method, Hermite rank, Fourier analysis.\\
\\
}
\end{abstract}

\section{Introduction }\label{introduction}

Fix a dimension $d\geq 2$, and consider a real-valued almost surely continuous Gaussian field $(B_x)_{x\in\R^d}$ defined on $\R^d$.
Assume furthermore that $B$ is {\bf stationary}, that is, there is a function $C:\R^d\to\R$ such that 
\begin{equation}\label{C}
{\rm Cov}(B_x,B_y)=C(x-y),\quad x,y\in\R^d,
\end{equation}
and suppose  that $B_x\sim N(0,1)$ for all $x\in\R^d$ or, equivalently, that $\E[B_x]=0$ and $C(0)=1$.

As a second ingredient, consider a measurable function $\varphi:\mathbb{R}\rightarrow\mathbb{R}$ such that
\begin{equation}\label{varphii}
\E\left[\varphi(N)^2\right]<\infty,\quad \mbox{for $N\sim N(0,1)$}.
\end{equation}

Our object of interest in this paper is
\begin{equation}\label{Yt}
Y_t = \int_{tD} \varphi(B_x)dx,\quad t>0,
\end{equation}
where $D\subset \R^d$ is compact with ${\rm Vol}(D)>0$ and $tD:=\{tx|x\in D\}$.
Well-posedness of (\ref{Yt}) as a random variable in $L^2(\Omega)$ is ensured by Proposition \ref{wellposed} and the almost sure continuity of $(B_x)_{x\in\R^d}$.
We also observe that the continuity of $B$, together with its stationarity, implies\footnote{Indeed, since $|B_0(B_{x+h}-B_x)|\le B_0^2+\frac12B_x^2+\frac12B_{x+h}^2$, by applying the generalized dominated convergence theorem we have
$
C(x+h)-C(x)=\E\left[B_0(B_{x+h}-B_x)\right]\rightarrow0$ as $h\rightarrow0$.} the continuity of its covariance function $C$, a property that will be needed to evoke Bochner's theorem later in (\ref{fourier}). 
\medskip

Studying the asymptotic behavior of functionals of the form (\ref{Yt}) dates back from the eighties, with seminal works by
Breuer and Major \cite{BM}, Dobrushin and Major \cite{DM}, Rosenblatt \cite{Rosenblatt} and Taqqu \cite{Taqqu}. 
Since then, limit theorems for (\ref{Yt}) have been constantly investigated, and represent nowadays a central theme in the  modern probability theory.

We note that many interesting geometric quantities associated with the Gaussian field $(B_x)_{x\in\R^d}$ can be represented as functionals of the form (\ref{Yt}). For instance, the
choice $\varphi=\mathbf{1}_{(-\infty,u]}$ (resp. $\varphi=\mathbf{1}_{[u,\infty)}$), $u\in\R$
corresponds to the volume of the lower (resp. upper) level sets of $(B_x)_{x\in\R^d}$.
\medskip

Since (\ref{varphii}) holds, we can decompose $\varphi$ in Hermite polynomials (see, e.g., \cite[Section 1.4]{bluebook}) as
\begin{equation}\label{hermitedecomp}
\varphi =\E[\varphi(N)]+ \sum_{q=R}^\infty a_q H_q,\quad \mbox{with $R\geq 1$ such that $a_R\neq 0$},
\end{equation}
where $H_q$ denotes the $q$th Hermite polynomial and $a_q=a_q(\varphi)=\frac{1}{q!}\mathbb{E}\left[\varphi(N)H_q(N)\right]\in\R$.
The integer $R\geq 1$ is called the {\bf Hermite rank} of $\varphi$. 
We also define the {\bf{second Hermite rank}} $R'\ge2$ of $\varphi$ as the Hermite rank of $\varphi(x)-\E[\varphi(N)]-a_RH_R(x)$ (if $\varphi(x)=\E[\varphi(N)]+a_RH_R(x)$, we set $R'=\infty$).

\medskip 

 In the present paper, we are more specifically  interested in the asymptotic behavior of 
 \begin{equation}\label{normalized}
 	\frac{Y_t-m_t}{\sigma_t},\quad t\to\infty,
 \end{equation}
 where we have $Y_t\in L^2(\Omega)$ for all $t$, and where we note $m_t=\E[Y_t]$ and $\sigma_t = \sqrt{{\rm Var}(Y_t)}>0$. 
 For simplicity, to ensure that $\sigma_t>0$ for all $t$, we will assume for the rest of the paper the existence of some $k\geq 1$ such that $a_{2k}\neq0$\footnote{This comes from the fact that $\sigma_t^2={\rm Var}(Y_t)=\sum_{q=R}^\infty q!a_q^2\int_{(tD)^2}C^q(x-y)dxdy$.}, or equivalently that $\varphi$ is \textbf{not an odd function}. 
As an illustration of what may happen when $\varphi$ is odd, see (\ref{extrank1}).
 
\medskip
Throughout all the paper, for two functions $f,g:\R_+\to\R_+$ we write 
\begin{equation}\label{asymp}
	f(t)\asymp g(t)
\end{equation} 
to indicate that $f(t)=O(g(t))$ and $g(t)=O(f(t))$ as $t\to\infty$.
\subsection{Previous results}
Given its importance in our paper, we start with the celebrated Breuer-Major theorem, stated here in its continuous form.

\begin{theorem}[Breuer, Major \cite{BM}]\label{BM}
Let $B=(B_x)_{x\in\R^d}$ be a real-valued continuous centered Gaussian field on $\R^d$, assumed to be stationary and to have unit-variance.
Let $\varphi:\R\to\R$ be such that $\E[\varphi(N)^2]<\infty$ with $N\sim N(0,1)$, let $R$ be the Hermite rank of $\varphi$,
consider $Y_t$ defined by (\ref{Yt}), and 
recall the definition  (\ref{C}) of the covariance function $C$.

If $\int_{\R^d}|C(x)|^Rdx<\infty$ , then
$
t^{-\frac{d}2}(Y_t-\E[Y_t])\overset{\rm law}{\to} N(0,\sigma^2)$ where 
\begin{equation}
	\label{varBM}\sigma^2={\rm Vol}(D)\sum_{q=R}^\infty q!a_q^2\int_{\R^d}C(z)^qdz\geq 0.
\end{equation}
In particular, if $\varphi$ is not odd, then $\sigma^2>0$, $\sigma_t^2\asymp t^d$ and 
\[
\frac{Y_t-m_t}{\sigma_t}\overset{\rm law}{\to}N(0,1).
\]
\end{theorem}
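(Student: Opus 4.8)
The plan is to combine the Wiener chaos decomposition of $\varphi$ with the Malliavin--Stein (fourth moment) method, together with an elementary rescaling argument for the variance. Let $\mathcal{H}$ be the real separable Hilbert space and $\{B(h):h\in\mathcal{H}\}$ the isonormal Gaussian process canonically associated with $B$, so that there exist $e_x\in\mathcal{H}$ with $\langle e_x,e_y\rangle_{\mathcal{H}}=C(x-y)$, $B_x=B(e_x)$ and $\|e_x\|_{\mathcal{H}}=1$; then $H_q(B_x)=I_q(e_x^{\otimes q})$, where $I_q$ denotes the $q$-th multiple Wiener--Itô integral. Decomposing $\varphi$ as in (\ref{hermitedecomp}) and integrating over $tD$ (using $L^2(\Omega)$-convergence of the Hermite series to exchange sum and integral) yields
\[
Y_t-m_t=\sum_{q\ge R}I_q^{(t)},\qquad I_q^{(t)}:=a_q\int_{tD}H_q(B_x)\,dx=I_q\!\Big(a_q\int_{tD}e_x^{\otimes q}\,dx\Big),
\]
where each $I_q^{(t)}$ lives in the $q$-th Wiener chaos; we set $F_t:=t^{-d/2}(Y_t-m_t)$ and $F_t^{(M)}:=t^{-d/2}\sum_{q=R}^{M}I_q^{(t)}$.

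First I would settle the variance asymptotics. By orthogonality of distinct chaoses $\sigma_t^2=\sum_{q\ge R}q!\,a_q^2\int_{(tD)^2}C(x-y)^q\,dx\,dy$; rewriting the inner integral as $\int_{\R^d}C(z)^q\,{\rm Vol}\big((tD)\cap(tD+z)\big)\,dz$ with ${\rm Vol}\big((tD)\cap(tD+z)\big)=t^d\,{\rm Vol}\big(D\cap(D+z/t)\big)$, which is $\le t^d\,{\rm Vol}(D)$ and converges pointwise in $z$ to $t^d\,{\rm Vol}(D)$, and using $|C|^q\le|C|^R\in L^1(\R^d)$ (since $\|C\|_\infty=C(0)=1$), two applications of dominated convergence (in the variable $z$, then in $q$ against the summable sequence $q!\,a_q^2\,{\rm Vol}(D)\int_{\R^d}|C|^R$) give $t^{-d}\sigma_t^2\to\sigma^2$. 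The same domination shows, uniformly in $t$, that $\E\big[(F_t-F_t^{(M)})^2\big]\le\delta_M:={\rm Vol}(D)\big(\int_{\R^d}|C|^R\big)\sum_{q>M}q!\,a_q^2$, with $\delta_M\to0$ as $M\to\infty$. If $\sigma^2=0$ the theorem is immediate, since then $\E[F_t^2]=t^{-d}\sigma_t^2\to0$; so assume $\sigma^2>0$ from now on.

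Next, fix $M\ge R$. The random variables $t^{-d/2}I_q^{(t)}$, $R\le q\le M$, lie in pairwise distinct chaoses, hence their cross-covariances vanish identically; by the multivariate fourth moment theorem of Peccati--Tudor, the vector $\big(t^{-d/2}I_q^{(t)}\big)_{R\le q\le M}$ converges in law to a centered Gaussian vector with diagonal covariance $(\sigma_q^2)_{R\le q\le M}$, $\sigma_q^2:=q!\,a_q^2\,{\rm Vol}(D)\int_{\R^d}C(z)^q\,dz$, provided that (a) each individual variance converges (the single-$q$ case of the previous step), and (b) for every $q\in\{R,\dots,M\}$ with $q\ge2$ and every $r\in\{1,\dots,q-1\}$ the contraction norm $\big\|f_{q,t}\otimes_r f_{q,t}\big\|_{\mathcal{H}^{\otimes 2(q-r)}}\to0$, where $f_{q,t}:=a_q t^{-d/2}\int_{tD}e_x^{\otimes q}\,dx$ (nothing is needed when $q=1$, as $I_1^{(t)}$ is already Gaussian). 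Granting (b), the continuous mapping theorem gives $F_t^{(M)}\overset{\rm law}{\to}\mathcal N(0,s_M^2)$ with $s_M^2:=\sum_{q=R}^{M}\sigma_q^2\to\sigma^2$; combining this with the uniform bound $\E[(F_t-F_t^{(M)})^2]\le\delta_M$ through a standard approximation argument (let $t\to\infty$ for fixed $M$, then $M\to\infty$) yields $F_t\overset{\rm law}{\to}\mathcal N(0,\sigma^2)$, i.e. $t^{-d/2}(Y_t-m_t)\overset{\rm law}{\to}\mathcal N(0,\sigma^2)$. For the last assertion: if $\varphi$ is not odd there is $k\ge1$ with $a_{2k}\ne0$, and since $C$ is continuous with $C(0)=1$ one has $C^{2k}>0$ on a neighbourhood of the origin, whence $\int_{\R^d}C(z)^{2k}\,dz>0$ and thus $\sigma^2\ge(2k)!\,a_{2k}^2\,{\rm Vol}(D)\int_{\R^d}C^{2k}>0$; therefore $\sigma_t^2\asymp t^d$, and Slutsky's lemma converts the above into $(Y_t-m_t)/\sigma_t=\big(t^{-d/2}(Y_t-m_t)\big)\big/\big(t^{-d/2}\sigma_t\big)\overset{\rm law}{\to}\mathcal N(0,1)$.

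The crux is (b), and it is exactly here that the hypothesis $C\in L^R(\R^d)$ enters. After expanding the norm and bounding by $|C|$ (using $\|C\|_\infty\le1$),
\[
\big\|f_{q,t}\otimes_r f_{q,t}\big\|^2\le a_q^4\,t^{-2d}\int_{(tD)^4}|C(x_1-x_2)|^{r}\,|C(x_3-x_4)|^{r}\,|C(x_1-x_3)|^{q-r}\,|C(x_2-x_4)|^{q-r}\,dx_1dx_2dx_3dx_4 .
\]
Freezing $x_1$ (which yields a factor $t^d\,{\rm Vol}(D)$ and absorbs one power of $t^{-d}$), integrating out $x_2$ by H\"older --- with the key observation that $\int_{\R^d}|C(u)|^{r}|C(u-w)|^{q-r}\,du\le\int_{\R^d}|C|^{q}<\infty$ uniformly in $w$ --- and then controlling the two remaining single integrals over balls of radius $O(t)$ via the elementary bound $\int_{|z|\le\rho}|C(z)|^{s}\,dz\lesssim\rho^{d(R-s)/R}$ for $0<s<R$ (obtained by splitting $\{|C|\le\e\}\cup\{|C|>\e\}$ and optimizing in $\e$), one gets $\big\|f_{q,t}\otimes_r f_{q,t}\big\|^2\lesssim t^{d(R-q)/R}$, which is $o(1)$ for $q>R$. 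The borderline chaos $q=R$ (where this only gives $O(1)$) must be treated more delicately: decompose $C=C\mathbf 1_{\{|\cdot|\le A\}}+C\mathbf 1_{\{|\cdot|>A\}}$ and observe that the purely short-range contribution lives on a diagonal tube of volume $O(t^d)$, hence is $O(t^{-d}A^{3d})\to0$, while every term carrying at least one long-range factor is, by the $A$-truncated refinement of the integrability bound above, $\lesssim\big(\int_{|z|>A}|C|^{R}\big)^{\theta}$ for some $\theta>0$, uniformly in $t$; letting first $t\to\infty$ and then $A\to\infty$ disposes of all of them. This borderline analysis is, I expect, where the real work lies; an alternative avoiding Malliavin calculus is the original diagram/method-of-moments route of Breuer--Major, in which one shows directly that the cumulants of order $\ge3$ of $F_t$ vanish while the second cumulant converges, again bounding the relevant multiple integrals by the same $L^R$-integrability.
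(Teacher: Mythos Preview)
The paper does not prove Theorem~\ref{BM}: it is stated as the classical Breuer--Major theorem with a reference to \cite{BM}, and the modern proof strategy (chaos expansion, Fourth Moment Theorem applied to each $Y_{q,t}$, then passage to the full sum) is only sketched in the paragraph preceding Proposition~\ref{Prop5.4}. Your proposal follows exactly that modern strategy, with references to \cite{NPP} and \cite[Chapter~7]{bluebook} in the paper, so there is essentially nothing to compare against.

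Your argument is correct. A few minor comments. First, the phrase ``freezing $x_1$'' is confusing: what you actually do is integrate $x_2$ by H\"older, then $x_4$ and $x_3$ as ``single integrals'', leaving one genuinely free variable that contributes $t^d{\rm Vol}(D)$; labelling aside, the bound $\|f_{q,t}\otimes_r f_{q,t}\|^2\lesssim t^{d(R-q)/R}$ for $q>R$ is right and your interpolation bound $\int_{|z|\le\rho}|C|^s\lesssim \rho^{d(R-s)/R}$ for $0<s<R$ is obtained exactly as you say. Second, your treatment of the borderline chaos $q=R$ is the genuinely delicate point, and the truncation $C=C\mathbf 1_{\{|\cdot|\le A\}}+C\mathbf 1_{\{|\cdot|>A\}}$ does work: for the all-short-range term the four constraints $|x_1-x_2|,|x_3-x_4|,|x_1-x_3|,|x_2-x_4|\le A$ force all six pairwise distances to be $\le 2A$, giving the $O(t^{-d}A^{3d})$ bound; for a term with a long-range factor in the ``H\"older pair'' $(x_1-x_2,x_2-x_4)$ one extracts $(\int_{|z|>A}|C|^R)^{r/R}$ or $(\int_{|z|>A}|C|^R)^{(R-r)/R}$ directly from the H\"older step, while for a long-range factor in the remaining pair one uses the $A$-truncated refinement $\int_{A<|z|\le\rho}|C|^s\lesssim \rho^{d(R-s)/R}(\int_{|z|>A}|C|^R)^{(R-s)/R}$ (same splitting argument, same optimisation), and the powers of $t$ again cancel exactly when $q=R$. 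So every mixed term is bounded by a positive power of $\int_{|z|>A}|C|^R$, uniformly in $t$, and the $t\to\infty$ then $A\to\infty$ scheme closes.
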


To describe the asymptotic behavior in the case where 
$C\not\in L^R(\R^d)$ (that is, when we cannot apply the Breuer-Major Theorem \ref{BM}),
we have to be more precise on the behavior of $C$ at infinity.
In the papers studying limit theorem for (\ref{normalized}) in a {\it general} framework (i.e. not for a {\it particular} model),
it is often (if not always) assumed that 
\begin{equation}\label{classique}
C(x)= |x|^{-\beta} L(|x|),
\end{equation}
with $\beta\in(0,\infty)$ and $L:\R_+\to\R$ \textbf{slowly varying} (that is, satisfying $L(\lambda r)/L(\lambda)\rightarrow 1$ as $\lambda\rightarrow\infty$, for every fixed $r>0$).
The following three different situations (i)-(ii)-(iii) then occur:
\begin{enumerate}
	\item If $\beta>\frac{d}{R}$ then $C\in L^R(\R^d)$ and we say that we are in the {\bf short-memory case}. We deduce from  Breuer-Major Theorem \ref{BM} that $t^{-d/2}(Y_t-\E[Y_t])\overset{\rm law}{\to}  N(0,\sigma^2)$.
	\item If $\beta=\frac{d}{R}$ we are in a {\bf critical case}. Although $C\not\in L^R(\R^d)$, fluctuations of $Y_t$ around its mean are still asymptotically Gaussian (see, e.g., \cite[Theorem 1']{BM}, \cite[Section 5]{NN} and \cite[Theorem 1]{NNT}).
	\item If $\beta<\frac{d}{R}$ then $C\not\in L^R(\R^d)$ and we say that we are in the {\bf long-memory case}.
	A theorem of Dobrushin and Major \cite{DM} asserts that $t^{-(d-\frac{R\beta}{2})}L^{-R/2}(t)(Y_t-\E[Y_t])\overset{\rm law}{\to} Z$ where, up to a multiplicative constant, $Z$ is the Hermite distribution of order $R$ 
	and self-similarity index $H\in(0,1)$ (with $H$ depending only on $\beta$).
	Since we do not use it in the sequel, we do not give its precise definition here. (The interested reader can
	e.g. consult \cite{tudor-book}.) Let us only stress here that the Hermite distribution of order $R$ belongs to the $R$th Wiener chaos, and so is {\bf not Gaussian} as soon as $R\geq 2$.
\end{enumerate}

\subsection{Motivating examples}
The intuition we can naturally develop from the previous (i)-(ii)-(iii) (and that represents the common intuition forged by the papers written on the subject over the last forty years) is that $Y_t$ defined by $(\ref{Yt})$ displays {\it Gaussian} (resp. {\it non-Gaussian}) fluctuations 
when the Gaussian field $B$ has {\it short} (resp. {\it long}) memory, and this whatever the function $\varphi$ with 
Hermite rank $R\geq 2$. (The case $R=1$ is apart.\footnote{The case where $R=1$ is apart since, whatever the memory, we generally obtain Gaussian fluctuations. But this is for different reasons that we understand better in the functional version: in the short memory case the limit is a standard Brownian motion, while in the long memory case the limit is a fractional Brownian motion.})

As anticipated, we will show in this paper that this intuition can be wrong, and not only marginally or in critical cases.
We will indeed state a central limit theorem (Theorem \ref{thm:main} below) whose conclusion is valid provided that
a certain spectral condition is satisfied. As we will see, this may lead to Gaussian fluctuations in a long memory context, in total contrast with Dobrushin and Major \cite{DM} (see (iii) above).

Recently, {\bf Berry's Random Wave Model} (BRWM) has attracted a lot of attention.
It is defined as follows. Choose the dimension $d=2$ and consider the centered continuous Gaussian field $B$ on $\R^2$
with covariance $\E[B_xB_y]=C(x-y)=J_0(|x-y|)$, with $J_0$ the Bessel function of the first kind of order 0 (see Section \ref{besselstuff} for its definition and some properties);
in particular, we have
\begin{equation}\label{pasclassique}
C(x)= \sqrt{\frac{2}{\pi}}\,|x|^{-\frac12}\,\cos\left(|x|-\frac{\pi}4\right) + O\left(|x|^{-\frac32}\right)\quad \mbox{as $|x|\to\infty$},
\end{equation}
see e.g. \cite[Theorem 4]{krasikov}. This field, called in this way in honor of Berry who introduced it in the seminal paper \cite{Berry}, can be seen as a universal Gaussian field
emerging as the local scaling limit of a number of random fields on two-dimensional manifolds, see e.g. \cite{xxx} and the references therein. 
It is widely regarded as a popular model for the Laplacian eigenfunctions (with large eigenvalue $t^2$) of classically chaotic billiards, hence
its importance in quantum mechanics. Indeed, integrating $\varphi(B_x)$ over $tD$ is the same as integrating $\varphi(B_{tx})$ over the fixed domain $D$, after a change of variable.
As we will see, our Theorem \ref{thm:main} will allow to prove the Gaussian fluctuations of $\int_D\varphi(B_{tx})dx$ for many functionals that were never investigated in the literature, in particular for the cases $R=2$ and $R=1$ in (\ref{rate}), enriching the knowledge on the geometric properties of the Berry's random wave model. Indeed, to the best of our knowledge, so far Gaussian fluctuations were proved only for (\ref{Yt}) under the assumption $R\ge4$ (see e.g. \cite{Notarnicola}), or for nodal set volumes (see \cite{NPR}), where the latter cannot be expressed in the form (\ref{Yt}).\\ 

If we compare (\ref{pasclassique}) with (\ref{classique}), it is like we have $\beta=\frac12$ and that we had replaced the slowly varying function $L$ in (\ref{classique})  by the bounded and oscillatory function $\cos(\cdot-\frac{\pi}4)$.
As we will see, this replacement is all but insignificant in the presence of long memory.
Indeed, taking for example $D$ compact such that {$D=\overline{\mathring{D}}$ and with smooth $\partial D$ and non-vanishing Gaussian curvature}, if we apply our main result (Theorem \ref{thm:main} below) for $R\in \{1,2,4\}$ and Breuer-Major theorem for $R\ge5$, we obtain (recalling that $R$, resp. $R'$, denotes the Hermite rank, resp. the second Hermite rank, of the \textbf{non-odd} function $\varphi$):
\begin{equation}\label{rate}
\sigma_t^2 \asymp \left\{\begin{array}{lll}
t^3&&\mbox{if $R=2$ or  ($R=1$ and $R'=2$)}\\
t^2\log(t)&&\mbox{if $R=4$ or  ($R=1$ and $R'=4$)}\\
t^2&&\mbox{if $R\ge5$ or  ($R=1$ and $R'\ge5$)}
\end{array}
\right.
\end{equation} 
and
\begin{equation}\label{example}
\frac{Y_t-m_t}{\sigma_t}\overset{\rm law}{\to} N(0,1)\quad \mbox{as $t\to\infty$}.
\end{equation}
Indeed, Gaussian fluctuations (\ref{example}) and asymptotics (\ref{rate}) follow directly from (\ref{pasclassique}) and the Breuer-Major Theorem \ref{BM} when $R\geq 5$, because in this case we have $\int_{\R^d}|C(x)|^Rdx<\infty$.
The case $R=4$ is comparable with the situation (ii) in the model (\ref{classique}), since $\beta=\frac12=\frac{d}R$; Gaussian fluctuations (\ref{example}) and asymptotic (\ref{rate}) displaying a logarithmic correction are then not surprising, since 
in agreement with what is usually
observed in critical cases.
In constrast, the fact that we still have Gaussian fluctuations in (\ref{example}) with unconventional rate when $R\in\{{1,2}\}$
is very surprising.
Indeed, since $\beta=\frac12<1\le\frac{d}{R}$ in this case, we are in the long-memory case
where non-Gaussian fluctuations are usually the rule, see (iii). 
This last case turns out to be the most important, since $R=1$ and $R=2$ are the most common values in applications.

As we will see, what we just described (Gaussian fluctuations in a long memory framework) for BRWM is not an isolated phenomenon. In fact, we are going to state and prove Theorem \ref{thm:main} (our main result), which not only explains in a clear way the phenomenon observed for BRWM, but also bring to light an easy-to-check condition on the spectral measure of a general Gaussian field $B$ (see (\ref{spectralcond}) below) to imply Gaussian fluctuations for $Y_t$.

We conclude this section by emphasing that Gaussian fluctuations in presence of long memory have already been observed in the literature in other (non-Euclidean) contexts, in particular for integral functionals of random Laplace eigenfunctions on the sphere $\mathbb{S}^2$, see \cite{MW} for more details.

\subsection{Main result}
In order to state Theorem \ref{thm:main}, we have to introduce a certain number of further quantities and notations.
We continue to let $B$, $\varphi$ and $Y_t$ be as described in (\ref{C}), (\ref{varphii}) and (\ref{Yt}) respectively,
and we  recall the Hermite decomposition (\ref{hermitedecomp}) of $\varphi$ defining its Hermite rank $R$ and its second Hermite rank $R'$.

Fix $t> 0$ and $q\geq R$, 
and  
set 
\begin{equation}\label{yqt}
Y_{q,t} =  \int_{tD} H_q(B_x)dx.
\end{equation} 
Using  (\ref{hermitedecomp}) , we immediately get that
\begin{equation}
	\label{chaoticdecomp}
	Y_t = \E[Y_t]+ \sum_{q=R}^\infty a_q\,Y_{q,t},\quad t\geq 0.
\end{equation}
Moreover,
a direct computation (making use of the isometry properties of Hermite polynomials, see, e.g.,  \cite[Section 1.4]{bluebook}) yields that 
$$
{\rm Var}(Y_{q,t})= q!t^d v_{q,t},$$ where
		\begin{equation*}
			v_{q,t}=\int_{\R^d}C(z)^q\,g_D\left(\frac{z}{t}\right)dz=\int_{\{|z|\le {\rm diam}(D)t\}}C(z)^q\,g_D\left(\frac{z}{t}\right)dz\geq 0,
		\end{equation*}
with $g_D(x)$ the {\bf covariogram} of $D$ at $x\in\R^d$, defined as the Lebesgue measure of $D\cap (x+D)$, and ${\rm diam}(D):=\sup\{|x-y|:x,y\in D\}<\infty$ since $D$ is compact.
When $C\in L^q(\R^d)$, we deduce by dominated convergence (using that $\|g_D\|_\infty\leq {\rm Vol}(D)$ and $g_D(\frac{z}{t})\to {\rm Vol}(D)$ as $t\to\infty$ for all fixed $z\in\R^d$)
that $$v_{q,t}\to {\rm Vol}(D)\int_{\R^d}C(z)^qdz\geq 0.$$

Since our field $B$ is stationary and its covariance function $C$ is continuous,
Bochner's theorem yields the existence of a finite measure $G$ on $\R^d$ such that
\begin{equation}\label{fourier}
C(x) = \int_{\R^d} e^{i\langle \lambda,x\rangle} G(d\lambda),
\end{equation}
with $\langle\cdot,\cdot\rangle$ the usual scalar product on $\R^d$.
The measure $G$ is called the {\bf spectral measure} of $B$, and it will be our gateway  towards the Fourier analysis techniques
developed in the sequel.

At this stage, let us make a further assumption on $B$, by supposing that it is also {\bf isotropic}.
In our framework, this is equivalent to suppose that the quantity $C(x)$ only depends on the norm $|x|$, namely that there is a function $\rho:\R_+\to\R$ such that 
$$C(x)=\rho(| x|),\quad x\in\R^d.$$

Now, set 
\begin{equation}\label{mu}
\mu(s)=G(\{|x|\le s\}), \quad s\in (0,\infty).
\end{equation}
Since $\mu$ is increasing and bounded, it defines a finite measure on $\R_+$, called the {\bf isotropic spectral measure} of $B$.
Because $C(x)$ only depends of $|x|$, we can write, with $w_d=\int_{S^{d-1}}d\xi$ ($S^{d-1}$ being the unit sphere of $\R^d$): 
\begin{eqnarray}
\rho(r)&=&\frac1{w_d}\int_{S^{d-1}} C(r\xi)d\xi = \frac1{w_d}\int_{S^{d-1}} d\xi \int_{\R^d}e^{i\langle\lambda,r\xi\rangle}G(d\lambda)\notag\\
&=& \int_{\R^d} b_d(r|\lambda|) G(d\lambda) = \int_0^\infty b_d(rs)\mu(ds),\label{rho}
\end{eqnarray}
where
\begin{equation}\label{bd}
b_d(|\lambda|) = \frac1{w_d}\int_{S^{d-1}} e^{i\langle\lambda,\xi\rangle}d\xi.
\end{equation}
By $d$-dimensional polar coordinates, we readily find (see \cite[page 815]{Schoenberg}) that
\begin{equation}\label{bd2}
b_d(r) = c_d\,r^{1-\frac{d}2}J_{\frac{d}2-1}(r), \quad r> 0,
\end{equation}
with $J_\nu$ denoting the Bessel function of the first kind of order $\nu$ (see Section \ref{besselstuff}) and $c_d> 0$ a constant depending only of $d$.

\medskip

We are at last in a position to state our main result, which we have decided to call the 
{\bf spectral central limit theorem},
because it leads to Gaussian fluctuations on the one hand (hence `central') and the main assumption we have to check is the spectral condition (\ref{spectralcond}) on the other hand (hence `spectral').

\begin{theorem}[Spectral CLT]\label{thm:main}
	Fix $d\geq 2$, let $B=(B_x)_{x\in\R^d}$ be a real-valued continuous centered Gaussian field  on $\R^d$, and
	assume that $B$ is stationary, isotropic and has unit-variance.
	Let $\varphi:\R\to\R$ be \textbf{not odd} and such that $\E[\varphi(N)^2]<\infty$ with $N\sim N(0,1)$, let $R$ (resp. $R'$) be the Hermite rank (resp. second Hermite rank) of $\varphi$, where $(R,R')\notin\{(1,3)\}\cup\{(2k+1,n):k\ge1, n\in\N\}$.
	Consider $Y_t$ defined by (\ref{Yt}), where $D\subset\R^d$  compact and ${\rm Vol}(D)>0$. Set $m_t=\E[Y_t]$ and $\sigma_t=\sqrt{{\rm Var}(Y_t)}>0$, and 
	recall the definition  (\ref{mu}) of the isotropic spectral measure $\mu$.
	Set
	\begin{equation}
		\label{var chaos bis}
		w_{q,t}=\int_{\{|z|\le t\}}C(z)^q dz,
	\end{equation}
	and assume that the following spectral condition holds
	\begin{equation}\label{spectralcond}
		\int_0^\infty s^{-\frac{d}{R}}\mu(ds)<\infty.
	\end{equation}
	Finally, when $R=2$ assume that $\left|\mathcal{F}[\mathbf{1}_{D}](x)\right|=O\left(\frac{1}{|x|^{d/2}}\right)$ as $|x|\rightarrow\infty$, and when $R=1$ assume that $|\mathcal{F}[\mathbf{1}_{D}](x)|=o\left(\frac{1}{|x|^{d/2}}\right)$ as $|x|\rightarrow\infty$, with $\mathcal{F}$ the Fourier transform (since we assumed $D$ to be compact, these two assumptions are satisfied for example when $D=\overline{\mathring{D}}$ and $\partial D$ is smooth with non-vanishing Gaussian curvature, see e.g. \cite{Brandolini}). Then we have:
	\begin{equation*}
		\sigma_t^2 \asymp \left\{\begin{array}{lll}
			t^dw_{R,t}&&\mbox{if $R$ even}\\
			t^dw_{R',t}&&\mbox{if $R=1$ and $R'\in\{2,4\}$}\\
			t^d&&\mbox{if $R=1$ and $R'\ge5$}
		\end{array}
		\right.
	\end{equation*} 
	and
	\begin{equation*}
		\frac{Y_t-m_t}{\sigma_t}\overset{\rm law}{\to} N(0,1)\quad \mbox{as $t\to\infty$}.
	\end{equation*}
\end{theorem}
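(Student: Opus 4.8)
The plan is to use the Malliavin--Stein method on the chaotic decomposition (\ref{chaoticdecomp}) of $Y_t$. Write $F_t=(Y_t-m_t)/\sigma_t=\sigma_t^{-1}\sum_{q\ge R}a_q Y_{q,t}$, where $Y_{q,t}=I_q(f_{q,t})$ is the $q$th multiple Wiener--It\^o integral of an explicit kernel $f_{q,t}$ built from $\mathbf 1_{tD}$ and the spectral representation of $B$. By the fourth-moment/Malliavin--Stein theorem, it suffices to show three things: (a) the variance splits off a leading chaos, i.e. $\mathrm{Var}(Y_t)\asymp c_q^2\,\mathrm{Var}(Y_{q,t})$ for the relevant $q$ (namely $q=R$ when $R$ is even, and $q=R'$ when $R=1$, $R'\in\{2,4\}$, or $q=2$ — the first even index — when $R=1$, $R'\ge 5$, since the Breuer--Major regime then applies to that chaos); (b) each normalized chaotic projection $\sigma_t^{-1}a_q Y_{q,t}$ converges in $L^2$ to a Gaussian (equivalently, its contractions $f_{q,t}\otimes_r f_{q,t}$, $1\le r\le q-1$, vanish in norm after normalization); and (c) the tail $\sum_{q>Q} a_q^2\,\mathrm{Var}(Y_{q,t})$ is negligible compared with $\sigma_t^2$ uniformly in $t$, so that it can be dropped.

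First I would carry out the variance asymptotics. Using $\mathrm{Var}(Y_{q,t})=q!\,t^d v_{q,t}$ with $v_{q,t}=\int_{\{|z|\le \mathrm{diam}(D)t\}}C(z)^q g_D(z/t)\,dz$, I would compare $v_{q,t}$ with $w_{q,t}=\int_{\{|z|\le t\}}C(z)^q\,dz$ via the sandwich $0\le g_D\le \mathrm{Vol}(D)$ and $g_D(z/t)\to\mathrm{Vol}(D)$; the spectral condition (\ref{spectralcond}) is the device that controls $w_{q,t}$ and, crucially, its growth rate — plugging (\ref{rho}) into $w_{q,t}$ and expanding $C^q$ via $q$-fold products of $b_d(rs_j)$, the decay $b_d(r)=O(r^{(1-d)/2})$ combined with $\int s^{-d/R}\mu(ds)<\infty$ yields that the $R$-fold (or $R'$-fold) term dominates and grows like $t^d w_{R,t}$ (resp. $t^d w_{R',t}$), while higher chaoses contribute $O(t^d)$. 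This simultaneously gives the claimed $\sigma_t^2$ rates and isolates the dominant chaos, establishing (a) and (c).

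The heart of the argument, and the step I expect to be the main obstacle, is (b): showing the contractions of the dominant kernel vanish after normalization. In Fourier coordinates $\|f_{q,t}\otimes_r f_{q,t}\|^2$ becomes a multiple integral over $(\R^d)^{2q-2}$ of products of $\widehat{\mathbf 1_{tD}}$ evaluated at sums of spectral variables, integrated against $\mu^{\otimes(2q-2)}$. The spectral condition (\ref{spectralcond}) again governs the radial part, but the angular/oscillatory part — which is exactly where the isotropic Bessel asymptotics (\ref{bd2}) and the decay hypotheses on $\mathcal F[\mathbf 1_D]$ enter — requires a careful stationary-phase/dispersive estimate: the oscillation of $J_{d/2-1}$ and of $\widehat{\mathbf 1_D}$ (with the non-vanishing curvature of $\partial D$ giving $|\widehat{\mathbf 1_D}(x)|\asymp |x|^{-(d+1)/2}$, hence within the assumed $O(|x|^{-d/2})$, resp. $o(\cdot)$) must be shown to produce enough cancellation that the normalized contraction norms tend to $0$. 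The borderline cases $R=2$ (where only $O$, not $o$, is available, so one extracts a genuinely vanishing — not merely bounded — contraction by a more delicate splitting) and $R=4$ (the critical $\beta=d/R$ analogue, where the logarithmic factor in $w_{4,t}$ appears) are the ones demanding the most care; the exclusion of $(R,R')\in\{(1,3)\}\cup\{(2k+1,n)\}$ is precisely what guarantees that the dominant chaos is even and not the odd rank-$R$ chaos (whose variance would be of lower order or whose fluctuations would behave differently), so that the comparison in (a) is legitimate. Once (a), (b), (c) are in hand, the Malliavin--Stein bound on $d_{\mathrm{TV}}(F_t,N(0,1))$ in terms of the contraction norms and the fourth cumulant goes to $0$, yielding (\ref{example}).
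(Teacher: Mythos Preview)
Your overall architecture---chaotic expansion, reduction to a dominant even chaos, Fourth Moment/contraction criterion---matches the paper's. For steps (a) and (c) the paper's mechanism is an iterated Cauchy--Schwarz (Proposition \ref{Prop5.4}) showing $v_{q,t}/v_{R,t}\to 0$ for all $q>R$, rather than a spectral expansion of $C^q$; this works once one knows $C\in L^M$ for some $M>R$, which follows from the pointwise bound $|C(x)|\le c|x|^{-(d-1)/2}$ obtained from (\ref{spectralcond}) via Lemma \ref{lm3.1}. For $R\ge 4$ even, the same lemma gives the stronger $|C(x)|\le c|x|^{-d/R}$ (since then $d/R\le (d-1)/2$), and this also dispatches the contractions directly, via the elementary inequality $u^r v^{R-r}\le u^R+v^R$---no Fourier analysis is needed there.

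The genuine gap is step (b) for $R=2$. Here $d/R>(d-1)/2$, so Lemma \ref{lm3.1} only yields $|C(x)|\le c|x|^{-(d-1)/2}$, and the direct contraction estimate \emph{diverges}. The paper's solution is not stationary phase but a specific rewriting (Lemma \ref{suitexp}) of $\|f_t\otimes_1 f_t\|^2$ as an integral against $G(dx)\,dy$, followed by the elementary split ``either $|x-y|\ge |x|/2$ or $|y|\ge |x|/2$''. On the first region (Lemma \ref{boundI}) one bounds $|\mathcal F[C_t](x-y)|\le c\,t^{(d-1)/2}w_{2,t}^{1/2}|x|^{-1/2}$ and introduces the auxiliary covariance $\bar C(u)=\int |x|^{-1/2}e^{i\langle x,u\rangle}G(dx)$, whose isotropic spectral measure is $s^{-1/2}\mu(ds)$; applying Lemma \ref{lm3.1} to $\bar C$ gains exactly the missing half-power. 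On the second region (Lemma \ref{boundII}) one uses the decay hypothesis on $\mathcal F[\mathbf 1_D]$ together with Cauchy--Schwarz and Young's convolution inequality. Both pieces are $O(t^{2d}w_{2,t}^{3/2})$, whence the normalized contraction is $O(w_{2,t}^{-1/2})\to 0$. Your proposal does not contain this split or the $\bar C$ device, and the generic description ``stationary-phase/dispersive estimate'' does not identify a mechanism that closes the estimate.

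A smaller correction: for $R=1$, $R'\ge 5$, there is no second-chaos term ($a_2=0$ by definition of $R'$), so ``$q=2$, the first even index'' is not available. The paper instead shows, combining the $o(|x|^{-d/2})$ hypothesis with (\ref{extrank1}) and dominated convergence, that $\mathrm{Var}(Y_{1,t})=o(t^d)$; it then removes the first chaos and applies the already-proved $R'$-case (Breuer--Major if $R'\ge 5$, Propositions \ref{prop:step1}--\ref{prop:step2} if $R'\in\{2,4\}$) to $\widehat\varphi=\varphi-a_0-a_1 x$.
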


\medskip

By a simple Fubini argument, we observe that the spectral condition (\ref{spectralcond})
is equivalent to
\begin{equation}\label{spectralcond2}
	\int_0^\infty s^{-1-\frac{d}{R}}\mu(s)ds<\infty.
\end{equation}
When $\rho(r)=r^{-\beta}L(r)$ with $\beta\in(0,\frac{d}{R})$ and $L$ slowly varying as $r\to\infty$,
we have
$\mu(s)\sim s^{\beta}L(s^{-1})$ as $s\to 0$, see \cite[Thm 1.4.3]{leonenko}.
We deduce that (\ref{spectralcond})-(\ref{spectralcond2}) is
not satisfied, which is of course perfectly consistent with the conclusion of the Dobrushin-Major noncentral limit theorem \cite{DM}.

Also, let us observe that the assumption "$\varphi$ is not odd" is equivalent to "$\exists$ $k\ge1$ such that $a_{2k}\neq0$", and so is needed only in the case where $R=1$ and $R'\ge5$. 
Moreover, the cases not covered by our Theorem \ref{thm:main} are $R\ge 3$ odd, $(R,R')=(1,3)$ and $\varphi$ odd. In these situations, very peculiar things can happen.
 Consider for example $\varphi(x)=x$, which gives $Y_t=\int_{tD}B_x dx$. 
Using  (\ref{var chaos}), (\ref{fourier}) and Fubini we have, with $\mathcal{F}$ the Fourier transform,
\begin{equation*}
	{\rm Var}(Y_t)=\int_{\R^d}C(z)g_{tD}(z)dz=\int_{\mathbb{R}^d}\mathcal{F}[g_{tD}](\lambda)G(d\lambda).
\end{equation*}
Using that $g_{tD}=\mathbf{1}_{tD}\ast\mathbf{1}_{-tD}$ we obtain
\begin{equation}\label{extrank1}
	{\rm Var}(Y_t)={\rm const}\int_{\mathbb{R}^d}\frac{t^{d}}{|\lambda|^d}|t\lambda|^d|\mathcal{F}[\mathbf{1}_{D}](t\lambda)|^2 G(d\lambda) .
\end{equation}
In particular, for $D=\{|z|\le1\}\subset \R^d$ and Berry's Random Wave Model $B=(B_x)_{x\in\R^2}$ we get (using (\ref{foufou}))
$$\sigma_t^2={\rm Var(Y_t)}={\rm const} \, t^2 J^2_{1}(t);$$
although $Y_t$ is Gaussian the conclusion of Theorem \ref{thm:main} cannot hold in this case, the
limit of $\frac{Y_t-m_t}{\sigma_t}$ being ill-defined due to the fact that ${\rm Card}\{t:\,\sigma_t=0\}\cap [T,\infty)=+\infty$ for all $T>0$.
\medskip

To understand the significance of our spectral CLT, let us go back to BRWM and explain
how Theorem \ref{thm:main} together with Breuer-Major theorem allow to prove (\ref{example}).
Since $C(x)=\rho(|x|)=J_0(|x|)$, it follows immediately from the representation (\ref{rho})
that the isotropic spectral measure associated with BRWM $B=(B_x)_{x\in\R^2}$ is $\mu= \delta_1$, with $\delta_1$ 
the Dirac mass at 1. 
In particular, the spectral condition (\ref{spectralcond}) is obviously satisfied, whatever the value of $R$.
The convergence (\ref{example}) thus follows from the Breuer-Major Theorem \ref{BM} (see also (\ref{pasclassique})) if $R\ge5$ and from Theorem \ref{thm:main} in all the other cases in (\ref{rate}).

\subsection{Possible natural extensions of Theorem \ref{thm:main}}

As a natural extension of the present work, it would be interesting to study the \textit{joint convergence} associated with Theorem \ref{thm:main}. More precisely, taking $D_1,\dots,D_n$ compact domains in $\R^d$, 
is it possible to identify conditions that resemble those in Theorem \ref{thm:main}
ensuring that the random vector $$\left(\int_{tD_1}\varphi(B_x)dx, \dots, \int_{tD_n}\varphi(B_x)dx\right)$$ converges, after proper normalization, to a Gaussian random vector? In other words, can we prove a {\it multivariate} spectral central limit theorem?
Such an extension is not immediate, because the spectral condition (\ref{spectralcond}) alone looks too general to capture the asymptotic behavior of the covariances between the components of the random vector. A partial answer to this question (covering also Berry's model) will be given in the forthcoming work \cite{M} by the first author.
		Note that a multivariate result was obtained in the particular case of the nodal length of Berry's model restricted to a finite collection of smooth compact domains $D_1,\ldots,D_n$ of the plane
		in the recent paper \cite{PV}. 
		
	A further stronger generalization of Theorem \ref{thm:main} would be to prove a {\it functional} spectral central limit theorem. 
	This problem has been investigated in the particular case of nodal set volumes in \cite{NPV} by ad hoc techniques. 
	In our general framework, 
	we believe however that proving such a functional extension would require novel ideas that go beyond the scope of the present paper. 

\subsection{Plan of the paper}
Apart from Section \ref{Examples-sec} that illustrates a use of Theorem \ref{thm:main}, the rest of the paper is fully devoted to the proof of this latter. More precisely, after a few needed preliminaries given in Section \ref{preliminaries}, 
\begin{enumerate}
\item in Section \ref{step1} we will first consider the situation where the Hermite rank $R$ of $\varphi$ is even and bigger or equal than $4$.
As we will see, in this case we have that $\int_{tD} |C(x)|^Rdx = O(\log t)$, meaning that we are somehow in the `domain of attraction' of the Breuer-Major theorem \ref{BM}.;
\item we will then deal with the remaining cases, namely $R=2$ in Section \ref{step2} and ($R=1$, $\varphi$ non-odd and $R'\neq3$) in Section \ref{rank1section}.
We regard this part as the most important contribution of this paper, especially since among the Hermite ranks, $R=1$ and $R=2$ are the most common values encountered in practice. 
They turn out to be also the most difficult cases. To deal with them, we will have to introduce novel ideas, by making heavy use of Fourier techniques in a way that, to the best of our knowledge, has never been introduced before this work.
\end{enumerate}

\section{A few preliminaries for the proof of Theorem \ref{thm:main}}\label{preliminaries}

\subsection{Well-posedness of $Y_t$}
 
 The following proposition explains why the random variable $Y_t$ defined by (\ref{Yt}) is well-defined for all $t$.

\begin{proposition}\label{wellposed}
Let $B=(B_x)_{x\in\R^d}$ be a real-valued continuous centered Gaussian field on $\R^d$, assumed to be stationary and to have unit-variance.
Let $D\subset \R^d$ be compact with ${\rm Vol}(D)>0$ and let $\varphi:\R\to\R$ be such that $\E[\varphi(N)^2]<\infty$ with $N\sim N(0,1)$.
Consider the Hermite expansion of $\varphi$ given by (\ref{hermitedecomp}), and set
$$\varphi_n = \mathbb{E}[\varphi(N)] + \sum_{q=R}^n a_qH_q,\quad n\geq R.$$
For each fixed $t>0$, the sequence $\int_{tD}\varphi_n(B_x)dx$, $n\geq R$, is a.s. well-defined and converges in $L^2(\Omega)$. The limit is noted $Y_t$ and we write, possibly with a slight abuse of notation:
$$
Y_t=\int_{tD}\varphi(B_x)dx.
$$
\end{proposition}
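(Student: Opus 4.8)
The plan is to prove that, for each fixed $t>0$, the sequence $\big(\int_{tD}\varphi_n(B_x)\,dx\big)_{n\ge R}$ is Cauchy in $L^2(\Omega)$, using only two ingredients: the orthogonality relation for Hermite polynomials against a bivariate Gaussian, and the elementary bound $|C|\le 1$. First, since $\varphi_n$ is a polynomial and $x\mapsto B_x$ is a.s. continuous, the map $x\mapsto\varphi_n(B_x)$ is a.s. continuous on the compact set $tD$; joint measurability of $(x,\omega)\mapsto B_x(\omega)$ (a consequence of continuity in $x$ together with measurability in $\omega$) ensures that $\int_{tD}\varphi_n(B_x)\,dx$ is a genuine random variable. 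This settles the ``a.s. well-defined'' part.

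Next, for $R\le m<n$ write $\varphi_n-\varphi_m=\sum_{q=m+1}^n a_qH_q$ and compute, by Fubini,
\[
\E\left[\left(\int_{tD}(\varphi_n-\varphi_m)(B_x)\,dx\right)^{2}\right]=\int_{tD}\int_{tD}\E\big[(\varphi_n-\varphi_m)(B_x)\,(\varphi_n-\varphi_m)(B_y)\big]\,dx\,dy .
\]
The application of Fubini is legitimate because, by Cauchy--Schwarz and stationarity, the double integral of the absolute value of the integrand is at most ${\rm Vol}(tD)^2\sum_{q\ge1}q!\,a_q^2<\infty$. Invoking the identity $\E[H_p(B_x)H_q(B_y)]=\delta_{pq}\,q!\,C(x-y)^q$, valid since $(B_x,B_y)$ is a standard Gaussian vector with correlation $C(x-y)$, the right-hand side equals $\sum_{q=m+1}^n q!\,a_q^2\int_{tD}\int_{tD}C(x-y)^q\,dx\,dy$.

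Now unit variance forces $|C(z)|\le C(0)=1$ for all $z$ (Cauchy--Schwarz), and every index appearing satisfies $q\ge m+1\ge R+1\ge 2$, so $0\le C(x-y)^q\le C(x-y)^2\le 1$ pointwise; hence the last display is bounded by ${\rm Vol}(tD)^2\sum_{q=m+1}^n q!\,a_q^2$, which tends to $0$ as $m,n\to\infty$ because $\sum_{q\ge0}q!\,a_q^2=\E[\varphi(N)^2]<\infty$ by Parseval's identity for the Hermite expansion. Thus the sequence is Cauchy in $L^2(\Omega)$ and converges; its limit is defined to be $Y_t$. The argument is essentially routine; the only points requiring a little care are the justification of Fubini (handled by the Cauchy--Schwarz/stationarity bound above) and the observation that, since we work with differences of partial sums starting at index $m\ge R$, all Hermite orders involved are $\ge 2$, which is precisely what makes the crude bound $|C|^q\le C^2$ available and thereby avoids any integrability hypothesis on $C$ itself.
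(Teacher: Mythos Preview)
Your argument is essentially the paper's own: well-definedness via continuity and compactness, then a Cauchy estimate in $L^2(\Omega)$ using Hermite orthogonality and the bound $|C|\le 1$ together with $\sum_q q!\,a_q^2<\infty$; the only cosmetic difference is that you bound $\E[(\int(\varphi_n-\varphi_m))^2]$ directly, whereas the paper shows that $\E[\int\varphi_n\cdot\int\varphi_m]$ converges. One small slip: the pointwise chain $0\le C(x-y)^q\le C(x-y)^2$ is false for odd $q$ when $C(x-y)<0$, and the remark that $q\ge 2$ is ``precisely what makes the crude bound available'' is unnecessary---all you actually use (and all that is needed) is $|C(x-y)^q|\le 1$, valid for every $q\ge 0$.
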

\noindent
{\it Proof}. 
That $\int_{tD}\varphi_n(B_x)dx$ is a.s. well-defined is because $tD$ is compact, $\varphi_n$ is a polynomial and $B$ is continuous.
For any $n,m\geq R$ and $q\in\N$, we have (since $|C(z)|\le C(0)=1$, $a_0=\E[\varphi(N)]$ and $a_q=0$ if $q\in\{1,\dots, R-1\}$)
\[
\left|\mathbf{1}_{[0,n\wedge m]}(q)\int_{(tD)^2}q!a_q^2 C(x-y)^qdxdy\right|\le {\rm Vol}(tD)^2 q!a_q^2,
\]
with $\sum_{q=0}^\infty{\rm Vol}(tD)^2 q!a_q^2={\rm Vol}(tD)^2 \E[\varphi(N)^2]<\infty$ by assumption. Then, by dominated convergence we obtain as $n,m\rightarrow\infty$
\begin{eqnarray*}
&&\E\left[
\int_{tD}\varphi_n(B_x)dx \int_{tD}\varphi_m(B_y)dy
\right]
=\sum_{q=0}^{n\wedge m}q!a_q^2 \int_{(tD)^2}C(x-y)^qdxdy\\
&\to&\mathbb{E}[\varphi(N)]^2{\rm Vol}(tD)^2+\sum_{q=R}^\infty q!a_q^2 \int_{(tD)^2} C(x-y)^qdxdy.
\end{eqnarray*}
That is, $\left\{ \int_{tD}\varphi_n(B_x)dx\right\}_{n\geq R}$ is an $L^2(\Omega)$-Cauchy sequence, and the
desired conclusion follows.
\qed

\subsection{Bessel functions and Fourier transform of the indicator of the unit ball}\label{besselstuff}

The Bessel function $J_\nu$, $\nu\geq 0$, is defined by the series
$$
J_\nu(t)=\left(\frac{t}{2}\right)^\nu\,\,\sum_{j=0}^\infty (-1)^j \frac{(t^2/4)^j}{j!\Gamma(j+\nu+1)},\quad t\in\R,
$$
and it is a solution to the ODE
$$
t^2 J''_\nu(t) + t J'_\nu(t) + (t^2-\nu^2)J_\nu(t)=0.
$$

It satisfies the classical Schl\"afli's representation
\begin{equation}\label{schl}
J_\nu(t) = \frac1\pi \int_0^\pi \cos(\nu\theta-t\sin\theta)d\theta - \frac{\sin(\nu\pi)}{\pi}\int_0^\infty e^{-\nu\theta - t\sinh \theta}d\theta,
\end{equation}
as well as the
Mehler-Sonine formula
\begin{equation}\label{mehler}
J_\nu(t) = \frac{(t/2)^\nu}{\sqrt{\pi}\,\Gamma(\nu+\frac12)}\int_{-1}^1 e^{its}(1-s^2)^{\nu-\frac12}ds.
\end{equation}

Schl\"afli's representation (\ref{schl}) immediately implies that $J_\nu$ is bounded on $\R$. This, combined with an inequality found e.g. in \cite{krasikov} (see also the references therein), leads to
\begin{equation}\label{boundBessel}
\sup_{t\in\R_+} \sqrt{t}|J_\nu(t)|<\infty\quad\mbox{for all $\nu\geq 0$},
\end{equation}
a simple property that we will use several times in the forthcoming proof of Theorem \ref{thm:main}.

\medskip

The Fourier transform of the indicator of the unit ball is given by
$$
\mathcal{F}\left[{\bf 1}_{\{|x|\le1\}}\right](y)=\int_{\{|x|\le1\}} e^{i\langle x,y\rangle}dx.
$$
Since $\mathcal{F}\left[{\bf 1}_{\{|x|\le1\}}\right] $ is rotationally symmetric, we can write, with $\alpha_d$ the volume of the unit ball $\{|x|\le1\}\subseteq \R^d$, 
\begin{eqnarray}
	\mathcal{F}\left[{\bf 1}_{\{|x|\le1\}}\right](y) &=&\mathcal{F}\left[{\bf 1}_{\{|x|\le1\}}\right]((0,\ldots,0,|y|))\notag\\
	&=& \int_{-1}^1 (1-x_d^2)^{\frac{d-1}{2}} \alpha_{d-1} e^{ix_d|y|}dx_d\notag\\
	&=& \alpha_{d-1}\sqrt{\pi}\,\Gamma\left(\frac{d+1}2\right)2^\nu\,|y|^{-\frac{d}2}J_{d/2}(|y|),\label{foufou}
\end{eqnarray}
the last equality being a consequence of (\ref{mehler}). 

\subsection{Reduction to the $R$th chaos}

In the short memory case, that is when $C\in L^R(\R^d)$, the Breuer-Major theorem \ref{BM} yields Gaussian fluctuations for $Y_t$.
In its modern proof given by \cite{NPP} (see also \cite[Chapter 7]{bluebook}) the chaotic expansion of $Y_t-m_t$ is considered, namely
\begin{equation}\label{eur}
Y_t-m_t = \sum_{q=R}^\infty a_q Y_{q,t},
\end{equation}
and the proof goes as follows. 
It is first shown that $t^{-d}\sigma_t^2\to \sigma^2$ in (\ref{varBM}) by means of the isometry property of Hermite polynomials.
Then, it is proved using the Fourth Moment Theorem (see \cite[Theorem 5.2.7]{bluebook}) that 
$t^{-d/2}Y_{q,t}\overset{\rm law}{\to} N(0,\sigma^2_q)$ for {\it all} $q\geq R$
from which it is deduced that $t^{-\frac d2}(Y_t-m_t)\overset{\rm law}{\to} N(0,\sum_{q=R}^\infty a_q^2\sigma_q^2)$ thanks to \cite[Theorem 5.2.7]{bluebook}.
In particular, we observe that no term is asymptotically dominant in (\ref{eur}): they all contribute to the limit.

As the following result will show, the situation is totally opposite in the critical and long memory cases (when $R$ is even): here, it is the term $Y_{R,t}$ alone which is
responsible of the limit.

\begin{proposition}[Reduction to the $R$th chaos]\label{Prop5.4}
	Let $B=(B_x)_{x\in\R^d}$ be a real-valued continuous centered Gaussian field  on $\R^d$, and
	assume that $B$ is stationary and has unit-variance (note that $d\ge2$ and isotropy are not required here).
	Let $\varphi:\R\to\R$ be such that $\E[\varphi(N)^2]<\infty$ (with $N\sim N(0,1)$) and have Hermite decomposition (\ref{hermitedecomp}) and Hermite rank $R$,
	and consider $Y_t$ defined by (\ref{Yt}), where $D\subset\R^d$  compact and ${\rm Vol}(D)>0$. Set $m_t=\E[Y_t]$, $\sigma_t=\sqrt{{\rm Var}(Y_t)}>0$,  
	and recall the definition (\ref{C}) of $C(x)$ and the definition (\ref{yqt}) of $Y_{q,t}$.
	
	If  $R$ is \textbf{even},  if $t^{-d}{\rm Var}(Y_{R,t})\to\infty$ and if $\int_{\R^d}|C(x)|^M dx<\infty$ for some $M\geq R+1$
	then, as $t\to\infty$, we have $\sigma_t^2  \asymp {\rm Var}(Y_{R,t})$ and
	\begin{equation}\label{iff}
		Y_{R,t}/\sqrt{{\rm Var}(Y_{R,t})}\overset{\rm law}{\to} N(0,1)\quad \Longrightarrow\quad\frac{Y_t-m_t}{\sigma_t}\overset{\rm law}{\to} N(0,1).
	\end{equation}
\end{proposition}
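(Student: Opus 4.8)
The plan is to control the higher-order chaoses $Y_{q,t}$ for $q>R$ by a combination of variance estimates and the Fourth Moment Theorem, showing that after normalization by $\sqrt{\mathrm{Var}(Y_{R,t})}$ they all vanish, so that only the $R$th chaos survives. Concretely, write $Y_t-m_t = a_R Y_{R,t} + \sum_{q\ge R+1} a_q Y_{q,t}$, and recall from the preliminaries that $\mathrm{Var}(Y_{q,t}) = q!\,t^d v_{q,t}$ with $v_{q,t}=\int_{\mathbb{R}^d} C(z)^q g_D(z/t)\,dz$. The first step is to estimate $v_{q,t}$ for $q\ge R+1$. Since $\|g_D\|_\infty \le \mathrm{Vol}(D)$ and $|C(z)|\le 1$, for $q\ge M$ we have $|v_{q,t}| \le \mathrm{Vol}(D)\int_{\mathbb{R}^d}|C(z)|^M\,dz =: K_M < \infty$, uniformly in $t$; for $R+1\le q < M$ one interpolates, e.g. $\int |C(z)|^q\,dz \le \int_{|C|\le\epsilon} \epsilon^{q-M}|C|^M dz + \int_{|C|>\epsilon}\dots$, or more simply uses $|C(z)|^q \le |C(z)|^{R+1}$ is false in general but $\int_{\{|z|\le \mathrm{diam}(D)t\}}|C(z)|^q dz$ can be bounded by a quantity that is $O(t^d)$ trivially and also, via splitting at a large radius, $o(t^d \cdot t^{-d}\mathrm{Var}(Y_{R,t}))$; the cleanest route is: since $\int|C|^M<\infty$, for every $q\ge R+1$, $\int_{\{|z|\le \mathrm{diam}(D)t\}}|C(z)|^q dz = O(\log t)$ when $q$ is in the critical range and $O(1)$ when $q>M$-type threshold, in all cases $\mathrm{Var}(Y_{q,t}) = O(t^d \log t)$, hence $\mathrm{Var}(Y_{q,t})/\mathrm{Var}(Y_{R,t}) = O(t^d\log t)/\mathrm{Var}(Y_{R,t}) \to 0$ since $t^{-d}\mathrm{Var}(Y_{R,t})\to\infty$ only gives $o(1)$ against $t^d$, so one needs the slightly sharper claim that $\mathrm{Var}(Y_{q,t}) = O(t^d)$ for $q\ge R+1$ under $\int|C|^M<\infty$ — this follows because $\int_{\mathbb R^d}|C|^q < \infty$ for all $q \ge M$ and for $R+1 \le q \le M-1$ one has $|C|^q \le |C|^{R+1}\mathbf 1_{|C|\le 1}$ only bounds against $|C|^{R+1}$ which need not be integrable, so instead bound $\int_{|z|\le \mathrm{diam}(D)t}|C|^q\,dz$: this is where the logarithmic factor genuinely can appear, and then one must check $t^d\log t = o(\mathrm{Var}(Y_{R,t}))$, i.e. use that in the relevant applications $\mathrm{Var}(Y_{R,t})$ grows strictly faster than $t^d\log t$. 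I would state the estimate as $\mathrm{Var}(Y_{q,t}) = o(\mathrm{Var}(Y_{R,t}))$ for each fixed $q\ge R+1$ and verify it from $t^{-d}\mathrm{Var}(Y_{R,t})\to\infty$ together with the integrability hypothesis, handling the borderline $q$ carefully.

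The second step is the $L^2$ tail: since $\sum_{q\ge R+1} a_q^2 q! \mathrm{Vol}(tD)^2 < \infty$ is too crude, instead note $\mathrm{Var}\big(\sum_{q\ge R+1} a_q Y_{q,t}\big) = \sum_{q\ge R+1} a_q^2 q!\,t^d v_{q,t} \le t^d \sum_{q\ge R+1} a_q^2 q! \cdot (\text{bound on } v_{q,t})$, and split the sum at a large index $Q$: the tail $q\ge Q$ contributes $\le t^d \mathrm{Vol}(D)\sum_{q\ge Q}a_q^2 q! \cdot 1$ which is small relative to $t^d$ hence relative to $\mathrm{Var}(Y_{R,t})$ after choosing $Q$ large (using $\sum a_q^2 q!<\infty$ and $|v_{q,t}|\le \mathrm{Vol}(D)$), while the finite head $R+1\le q<Q$ is handled term-by-term by Step 1. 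This shows $\mathrm{Var}\big(\sum_{q\ge R+1}a_q Y_{q,t}\big) = o(\mathrm{Var}(Y_{R,t}))$, which gives simultaneously that $\sigma_t^2 = a_R^2 \mathrm{Var}(Y_{R,t})(1+o(1)) \asymp \mathrm{Var}(Y_{R,t})$ and that $(Y_t - m_t - a_R Y_{R,t})/\sigma_t \to 0$ in $L^2$.

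The final step is to assemble: assuming $Y_{R,t}/\sqrt{\mathrm{Var}(Y_{R,t})}\overset{\mathrm{law}}{\to} N(0,1)$, Slutsky's lemma combined with $\sigma_t \sim |a_R|\sqrt{\mathrm{Var}(Y_{R,t})}$ gives $a_R Y_{R,t}/\sigma_t \to N(0,1)$ (the sign of $a_R$ being absorbed since the limit is symmetric), and then the $L^2$-negligibility of the remainder from Step 2 upgrades this to $(Y_t-m_t)/\sigma_t \to N(0,1)$ via another Slutsky argument. I expect the main obstacle to be Step 1, specifically the borderline values $q$ with $\int_{\mathbb R^d}|C|^q = +\infty$ (which can occur for $R+1 \le q \le M-1$): one must show that even though $\mathrm{Var}(Y_{q,t})$ may carry a logarithmic or mild polynomial factor, it is still $o(\mathrm{Var}(Y_{R,t}))$, and this requires extracting a quantitative lower bound on the growth of $\mathrm{Var}(Y_{R,t})$ from the hypothesis $t^{-d}\mathrm{Var}(Y_{R,t})\to\infty$ — possibly one needs to first prove a clean asymptotic like $\mathrm{Var}(Y_{R,t}) \asymp t^d w_{R,t}$ and compare $w_{R,t}$ with $\int_{|z|\le \mathrm{diam}(D)t}|C|^q dz$ for $q>R$ using that $|C|\le 1$.
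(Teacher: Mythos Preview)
Your overall architecture (show the remainder $\sum_{q>R}a_qY_{q,t}$ is $o(\sqrt{\mathrm{Var}(Y_{R,t})})$ in $L^2$, deduce $\sigma_t^2\sim a_R^2\mathrm{Var}(Y_{R,t})$, then Slutsky) matches the paper's, and your Step 2 and final step are fine. The genuine gap is exactly where you flag it: Step 1, the bound on $v_{q,t}/v_{R,t}$ for the intermediate indices $R+1\le q\le M-1$.

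Your proposed fixes do not close the gap. Using $|C|\le 1$ only yields $\widetilde v_{q,t}\le v_{R,t}$, which is $O$, not $o$. Your heuristic $\mathrm{Var}(Y_{q,t})=O(t^d\log t)$ presupposes a polynomial decay bound on $C$ that is \emph{not} among the hypotheses of the proposition; and even if it held, the assumption $t^{-d}\mathrm{Var}(Y_{R,t})\to\infty$ gives no rate, so $\mathrm{Var}(Y_{R,t})$ could grow like $t^d\log\log t$, defeating the comparison. The missing idea is an \emph{iterated Cauchy--Schwarz}: write $|C|^q=|C|^{q-R/2}|C|^{R/2}$ and apply Cauchy--Schwarz with weight $g_D(z/t)$ to get
\[
\frac{\widetilde v_{q,t}}{v_{R,t}}\le\Bigl(\frac{\widetilde v_{2q-R,t}}{v_{R,t}}\Bigr)^{1/2},
\]
and iterate. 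The map $q\mapsto 2q-R$ gives $q\mapsto R+2^n(q-R)$ after $n$ steps; since $q\ge R+1$, a \emph{fixed} $n$ (depending only on $M-R$) pushes the index past $M$ for \emph{all} $q>R$ simultaneously, whence $\widetilde v_{R+2^n(q-R),t}\le\mathrm{Vol}(D)\int|C|^M<\infty$ and
\[
\frac{|v_{q,t}|}{v_{R,t}}\le C\,v_{R,t}^{-1/2^n}\to 0,
\]
uniformly in $q>R$. This uniformity also lets you sum against $\sum_{q>R}a_q^2 q!$ directly, so the splitting at $Q$ in your Step 2 becomes unnecessary.
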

\noindent
{\it Proof}. We have
\[
{\rm Var}(Y_{q,t})=\int_{(tD)^2}\mathbb{E}[H_q(B_y)H_q(B_x)]dxdy = q!\int_{(tD)^2}C(x-y)^q dxdy.
\]
Applying the change of variable $z=y-x$ and then Fubini, we obtain that
$${\rm Var}(Y_{q,t}) = q!t^d v_{q,t},$$ where
\begin{equation}
	\label{var chaos}
	v_{q,t}=\int_{\{|z|\le {\rm diam}(D)t\}}C(z)^q\,g_D\left(\frac{z}{t}\right)dz,
\end{equation}
where ${\rm diam}(D)=\sup{\{|x-y|:x,y\in D\}}<\infty$ (because $D$ is compact) and $g_D(x)$ is the covariogram of $D$ at $x\in\R^d$, that is, $g_D(x)$ is the Lebesgue measure of $D\cap (x+D)$.
Also, set
\begin{equation}
	\label{var chaos2}
	\widetilde{v}_{q,t}=\int_{\{|z|\le {\rm diam}(D)t\}}|C(z)|^q\,g_D\left(\frac{z}{t}\right)dz,
\end{equation}
and observe that $v_{R,t}=\widetilde{v}_{R,t}>0$ since $R$ is even.

For any $q>R$, we have 
\begin{equation}
	\label{var ratio}
	\frac{{\rm Var}(Y_{q,t})}{{\rm Var}(Y_{R,t})}=\frac{q!v_{q,t}}{R!v_{R,t}}.
\end{equation}
Applying Cauchy-Schwarz inequality we obtain, for $q>R$,
\begin{eqnarray*}
	\frac{|v_{q,t}|}{v_{R,t}}&&\leq \frac{\int_{\{|z|\le {\rm diam}(D)t\}}|C(z)|^qg_D(\frac{z}{t})dz}{v_{R,t}}\\
	&&=\frac{\int_{\{|z|\le {\rm diam}(D)t\}}|C(z)|^{q-\frac{R}{2}}|C(z)|^{\frac{R}{2}}g_D\left(\frac{z}{t}\right)dz}{v_{R,t}} \\
	&&\le \left(\frac{\int_{\{|z|\le {\rm diam}(D)t\}}|C(z)|^{2q-R}g_D\left(\frac{z}{t}\right)dz}{v_{R,t}}\right)^{1/2} =\left(\frac{\widetilde{v}_{2q-R,t}}{v_{R,t}}\right)^{1/2}.
\end{eqnarray*}
Applying Cauchy-Schwarz again, but this time with $2q-R>R$ instead of $q>R$, we obtain 
\[
\left(\frac{\widetilde{v}_{2q-R,t}}{v_{R,t}}\right)^{1/2}\le\left(\frac{\widetilde{v}_{4q-3R,t}}{v_{R,t}}\right)^{\frac{1}{4}}.
\]
By iterating the process, we get,  for every $n\ge3$
\[
\frac{|v_{q,t}|}{v_{R,t}}\le\left(\frac{\widetilde{v}_{2q-R,t}}{v_{R,t}}\right)^{1/2}\le\left(\frac{\widetilde{v}_{4q-3R,t}}{v_{R,t}}\right)^{\frac{1}{4}}\le \dots\le \left(\frac{\widetilde{v}_{R+2^n(q-R),t}}{v_{R,t}}\right)^{\frac{1}{2^n}}.
\]
When $q>R$, we have $R+2^n(q-R)\geq 2^n$, so we may and will choose $n$ large enough so that $R+2^n(q-R)\geq M$  for all $q>R$ (recall from the statement of Proposition \ref{Prop5.4} that $M$ is an integer supposed to be such that $\int_{\R^d}|C(x)|^Mdx<\infty$). 
Using $|C(x)|\le C(0)=1$ and $g_D(x)\leq {\rm Vol}(D)$ for all $x\in\mathbb{R}^d$, we deduce that $\widetilde{v}_{R+2^{n}(q-R),t}\le {\rm Vol}(D)\int_{\R^d}\big|C(x)\big|^Mdx$. Combining all these facts together,
we finally get that
\begin{equation}\label{kivabien2}
	\frac{|v_{q,t}|}{v_{R,t}}\leq \left({\rm Vol}(D)\int_{\R^d}|C(x)|^Mdx\right)^{\frac{1}{2^n}}\, v_{R,t}^{-\frac{1}{2^n}}\quad\mbox{for all $q>R$}.
\end{equation}

Since ${\rm Var}(Y_t)=\sum_{q=R}^\infty a_q^2\,{\rm Var}(Y_{q,t})$, we deduce
\begin{equation}\label{kivabien}
	\left|\frac{{\rm Var}(Y_t)}{{\rm Var}(Y_{R,t})}-a^2_R\right|\leq \frac{1}{R!}\,\left({\rm Vol}(D)\int_{\R^d}|C(x)|^Mdx\right)^{\frac{1}{2^n}}\,\left(\sum_{q=R+1}^\infty a_q^2q!\right)\,v_{R,t}^{-\frac{1}{2^n}},
\end{equation}
from which it comes that ${\rm Var}(Y_t)\sim a^2_R {\rm Var}(Y_{R,t})$ as $t\to\infty$, since $v_{R,t}=\frac1{R!}t^{-d}{\rm Var}(Y_{R,t})\to\infty$ by assumption.

To conclude the proof of Proposition \ref{Prop5.4}, 
it remains to prove 
(\ref{iff}). 
Using the decomposition
\begin{eqnarray*}
	&&\frac{Y_t-m_t}{\sqrt{{\rm Var}(Y_t)}} - {\rm sgn}(a_R)\frac{Y_{R,t}}{\sqrt{{\rm Var}(Y_{R,t})}}\\
	&=&\frac{{\rm sgn}(a_R)(Y_t-m_t-a_RY_{R,t})}{a_R\sqrt{{\rm Var}(Y_{R,t})}} + \frac{Y_{t}-m_t}{\sqrt{{\rm Var}(Y_{t})}}\left\{1-
	\frac{1}{|a_R|}\sqrt{\frac{{\rm Var}(Y_{t})}{{\rm Var}(Y_{R,t})}}
	\right\}
\end{eqnarray*}
we get that
\begin{eqnarray}\notag
	&&\mathbb{E}\left[\left(\frac{Y_t-m_t}{\sqrt{{\rm Var}(Y_t)}} - {\rm sgn}(a_R)\frac{Y_{R,t}}{\sqrt{{\rm Var}(Y_{R,t})}}\right)^2\right]\\
	&\leq &2\frac{\mathbb{E}[(Y_t-m_t-a_RY_{R,t})^2]}{a_R^2{\rm Var}(Y_{R,t}) }+ 2\,\left(1-
	\frac{1}{|a_R|}\,\sqrt{\frac{{\rm Var}(Y_{t})}{{\rm Var}(Y_{R,t})}}
	\right)^2\label{rrhs}.
\end{eqnarray}
Since $\mathbb{E}\left[\left({Y_t}-m_t-a_R Y_{R,t}\right)^2\right]=\sum_{q=R+1}^\infty a^2_q\,{{\rm Var}(Y_{q,t})}$, we deduce from (\ref{var ratio}) and  (\ref{kivabien2}) that
\begin{eqnarray*}
	&&
	\frac{\mathbb{E}[(Y_t-m_t-a_RY_{R,t})^2]}{a_R^2{\rm Var}(Y_{R,t}) }\\
	&
	\leq& 
	\frac{1}{R!a^2_R} \left({\rm Vol}(D)\int_{\R^d}|C(x)|^Mdx\right)^{\frac{1}{2^n}}\,\left(\sum_{q=R+1}^\infty q!a^2_q \right) v_{R,t}^{-\frac{1}{2^n}},
\end{eqnarray*}
and this tends to zero as $t\to\infty$.
By plugging this into (\ref{rrhs}) and taking into account that (\ref{kivabien}) holds, we deduce that 
$$
\frac{Y_t}{\sqrt{{\rm Var}(Y_t)}} - {\rm sgn}(a_R)\frac{Y_{R,t}}{\sqrt{{\rm Var}(Y_{R,t}})}\overset{L^2(\Omega)}{\to} 0\quad\mbox{as $t\to\infty$},
$$
from which the implication (\ref{iff}) now follows easily.\qed

\subsection{Elements of Malliavin calculus and the Fourth Moment theorem}\label{fmt}
To obtain the $N(0,1)$ distribution in the limit in Theorem \ref{thm:main}, we rely on the Fourth Moment Theorem of Nualart and Peccati
(see \cite[Theorem 5.2.7]{bluebook}). Before stating and reformulating it in our framework, we start with some notions on Malliavin calculus. 
For all the missing details we refer to \cite{bluebook} and \cite{N}.

\subsubsection{The Wiener-It\^o integral}Let $B=(B_x)_{x\in\R^d}$ be a real-valued continuous centered Gaussian field  on $\R^d$, and
assume that $B$ is stationary with unit-variance.
Define  
\[
\mathcal{H}=\overline{{\rm span}{\{B_x, \,x\in\R^d\}}}^{L^2(\Omega)}.
\]
Since $\mathcal{H}$ is a real, separable Hilbert space, there is an isometry $\Phi: \mathcal{H} \rightarrow L^2(\R_+)$. If we set $e_x:=\Phi(B_x)$ for every $x\in\R^d$, we have 
\[
\E[B_xB_y]=C(x-y)=\langle e_x,e_y\rangle_{L^2(\R_+)}.
\]
Consider now the Gaussian noise $W=\{W(h), h\in L^2(\R_+)\}$, i.e. a family of centered Gaussian random variables with covariance given by
\[
\E[W(h)W(g)]=\langle h,g \rangle_{L^2(\R_+)}.
\]
Since in this paper we are only interested in distributions, we can assume without loss of generality that $(B_x)_{x\in\R^d}=(W(e_x))_{x\in\R^d}$. 

For $q\ge1$, we also define the $q$th Wiener chaos as the linear subspace of $L^2(\Omega)$ generated by $\{H_q(W(h)), h\in L^2(\R_+)\}$. For every $h\in L^2(\R_+)$ with $\norm{h}_{L^2(\R_+)}=1$, we define the $q$th \textbf{Wiener-It\^o integral}
\[
I_q(h^{\otimes q})=H_q(W(h)),
\]
where $h^{\otimes q}:\R_+^q\rightarrow \R$ is defined by $$h^{\otimes q}(x_1,\dots,x_q)=\prod_{r=1}^{q}h(x_r).$$
Note that the definition of $I_q$ can be extended to every function in the space $L^2_s(\R_+^q)$ of symmetric functions in $L^2(\R_+^q)$ so that $I_q:L^2_s(\R_+^q)\rightarrow L^2(\Omega)$ is a linear map, because ${\rm span}\{h^{\otimes q}, h\in L^2(\R_+)\}$ is dense in $L^2_s(\R_+^q)$ (see, e.g., \cite{Floret}).

\subsubsection{Contractions and the Fourth Moment Theorem}
For $q\in\N$, $r\in\{1,\dots,q-1\}$ and $h_1,h_2$ symmetric functions with unit norm in $L^2(\R_+)$, we can define the $r$th \textbf{contraction} of $h_1^{\otimes q}$ and $h_2^{\otimes q}$ as the (non-symmetric) element of $L^2(\R_+^{2q-2r})$ given by
\[
h_1^{\otimes q}\otimes_r h_2^{\otimes q}=\langle h_1,h_2 \rangle_{L^2(\R_+)}^r \, h_1^{\otimes q-r}\otimes h_2^{\otimes q-r}.
\]  
Here again this definition can be extended (taking the closure in $L^2(\R_+^q)$) to every $h_1,h_2$ in $L^2_s(\R_+^q)$. We will denote the norm in the space $L^2(\R_+^{q})$ as $\norm{ \cdot}_q$. We can finally state the celebrated \textbf{Fourth Moment Theorem}, proved in \cite{NualartPeccati} by Nualart and Peccati.
\begin{theorem}[Fourth Moment Theorem]
	Fix $q\ge2$, consider $(h_t)_{t>0}\subset L^2_s(\R_+^q)$ and assume that $\E[I_q(h_t)^2]\rightarrow 1$ as $t\rightarrow\infty$. Then the following assertions are equivalent:
	\begin{itemize}
		\item $I_q(h_t)$ converges in distribution to a standard Gaussian $N\sim N(0,1)$.
		\item $\E[I_q(h_t)^4]\rightarrow 3=\E[N^4]$, where $N\sim N(0,1)$.
		\item $\norm{h_t\otimes_rh_t}_{2q-2r}\rightarrow 0$ as $t\rightarrow\infty$, for all $r=1,\dots,q-1$.
	\end{itemize}
\end{theorem}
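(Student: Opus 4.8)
The statement to be proved is the equivalence of the three conditions in the Fourth Moment Theorem, for a family $F_t:=I_q(h_t)$ living in the fixed $q$th Wiener chaos and satisfying $\E[I_q(h_t)^2]\to 1$. Label the three assertions $(\mathrm{i})$ (convergence in law to $N\sim N(0,1)$), $(\mathrm{ii})$ ($\E[F_t^4]\to 3$) and $(\mathrm{iii})$ (vanishing of all contractions). The plan is to reduce everything to two algebraic identities coming from the multiplication formula for multiple integrals and then to close the loop $(\mathrm{iii})\Rightarrow(\mathrm{i})\Rightarrow(\mathrm{ii})\Rightarrow(\mathrm{iii})$, using the Malliavin--Stein method for the first arrow, hypercontractivity for the second, and a direct moment computation for the third. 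I write $\otimes_r$ for the contraction and $\tilde\otimes_r$ for its symmetrization, and I would record at the outset the elementary comparisons $\norm{h_t\tilde\otimes_r h_t}_{2q-2r}\le \norm{h_t\otimes_r h_t}_{2q-2r}$ and $\norm{h_t\otimes_r h_t}_{2q-2r}=\norm{h_t\otimes_{q-r} h_t}_{2r}$, so that condition $(\mathrm{iii})$ is insensitive to symmetrization and it suffices to control $r=1,\dots,\lfloor q/2\rfloor$.

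The algebraic core is the following. Squaring via the multiplication formula $I_q(h_t)^2=\sum_{r=0}^q r!\binom{q}{r}^2 I_{2q-2r}(h_t\tilde\otimes_r h_t)$ and then applying the chaos isometry (orthogonality of chaoses of distinct orders together with $\E[I_m(f)I_m(g)]=m!\langle \tilde f,\tilde g\rangle$) produces a closed expression for $\E[F_t^4]$ as a positive weighted sum of the norms $\norm{h_t\tilde\otimes_r h_t}^2_{2q-2r}$, $r=0,\dots,q$; the extreme terms $r=0$ and $r=q$ reassemble into $3(\E[F_t^2])^2$, leaving an identity
$$\E[F_t^4]-3(\E[F_t^2])^2=\sum_{r=1}^{q-1}c_{q,r}\,\norm{h_t\tilde\otimes_r h_t}^2_{2q-2r}$$
with explicit constants $c_{q,r}>0$. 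This single identity yields the nonnegativity of the fourth cumulant and the equivalence $(\mathrm{ii})\Leftrightarrow(\mathrm{iii})$: since each summand is nonnegative and dominated by the left-hand side, $\E[F_t^4]-3(\E[F_t^2])^2\to 0$ forces every $\norm{h_t\tilde\otimes_r h_t}\to 0$ and hence, by the comparison above, every $\norm{h_t\otimes_r h_t}\to 0$, while the converse is immediate. The implication $(\mathrm{i})\Rightarrow(\mathrm{ii})$ is then a soft uniform-integrability argument: by hypercontractivity of the Ornstein--Uhlenbeck semigroup all $L^p(\Omega)$-norms are equivalent on the $q$th chaos, so $\sup_t\E[F_t^{4+\delta}]$ is controlled by powers of $\sup_t\E[F_t^2]<\infty$; thus $\{F_t^4\}_t$ is bounded in $L^{1+\delta/4}$ and uniformly integrable, and convergence in distribution upgrades to $\E[F_t^4]\to\E[N^4]=3$.

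The decisive implication is $(\mathrm{iii})\Rightarrow(\mathrm{i})$, which I would establish quantitatively. Let $d_W$ denote the Wasserstein distance. Stein's characterization gives $d_W(F_t,N(0,1))\le \sup_f |\E[f'(F_t)-F_tf(F_t)]|$ over the class of Stein solutions, which have uniformly bounded first derivative. Malliavin integration by parts writes $\E[F_tf(F_t)]=\E[f'(F_t)\langle DF_t,-DL^{-1}F_t\rangle_{\mathcal H}]$, and since $F_t$ lives in the $q$th chaos one has $-L^{-1}F_t=\tfrac1q F_t$, whence $\langle DF_t,-DL^{-1}F_t\rangle_{\mathcal H}=\tfrac1q\norm{DF_t}^2_{\mathcal H}$. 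Using $\E[\tfrac1q\norm{DF_t}^2_{\mathcal H}]=\E[F_t^2]$ one obtains, with a universal constant $C$,
$$d_W(F_t,N(0,1))\le C\,\E\Big|1-\tfrac1q\norm{DF_t}^2_{\mathcal H}\Big|\le C\Big(\big|1-\E[F_t^2]\big|+\sqrt{\mathrm{Var}\big(\tfrac1q\norm{DF_t}^2_{\mathcal H}\big)}\Big).$$
Expanding $DF_t=qI_{q-1}(h_t(\cdot,\ast))$ and contracting, the variable $\tfrac1q\norm{DF_t}^2_{\mathcal H}$ has a finite chaos expansion whose nonconstant components are precisely multiple integrals of the contractions $h_t\otimes_r h_t$; computing its variance via the isometry gives a second identity $\mathrm{Var}(\tfrac1q\norm{DF_t}^2_{\mathcal H})=\sum_{r=1}^{q-1}c'_{q,r}\norm{h_t\tilde\otimes_r h_t}^2_{2q-2r}$ with $c'_{q,r}>0$. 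Under $(\mathrm{iii})$ this variance tends to $0$, and since $\E[F_t^2]\to 1$ by hypothesis, the bound forces $d_W(F_t,N(0,1))\to 0$, i.e. $(\mathrm{i})$.

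The main obstacle is exactly this last step: extracting asymptotic normality from the purely analytic vanishing of contractions. Everything rests on the two identities above, and the conceptual point making the theorem true is that the \emph{same} contraction norms govern both the fourth cumulant and $\mathrm{Var}(\tfrac1q\norm{DF_t}^2_{\mathcal H})$ — this is what renders $(\mathrm{i})$, $(\mathrm{ii})$ and $(\mathrm{iii})$ interchangeable. The technical heart is therefore the careful derivation of the variance identity through the product formula and the isometry, keeping scrupulous track of symmetrizations; I would isolate once and for all the comparison inequalities between $\otimes_r$ and $\tilde\otimes_r$ and the reflection $r\leftrightarrow q-r$, so that the finitely many implications can be carried out uniformly in $r$ without combinatorial bookkeeping obscuring the argument.
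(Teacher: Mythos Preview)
The paper does not prove this theorem: it is stated as a known result with a citation to Nualart and Peccati \cite{NualartPeccati} (see also \cite[Theorem 5.2.7]{bluebook}), and is then used as a black box in Theorem \ref{fourth moment theorem} and in Sections \ref{step1}--\ref{step2}. There is therefore no proof in the paper to compare your proposal against.

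For what it is worth, your sketch is a correct outline of the Malliavin--Stein proof as presented in \cite[Chapter 5]{bluebook}, which is the route the paper implicitly has in mind when invoking the contraction criterion. Note that this is \emph{not} the original argument of \cite{NualartPeccati}, which proceeded via the Dambis--Dubins--Schwarz representation and stochastic calculus rather than Stein's method; the Malliavin--Stein route you describe is more recent and has the advantage of being quantitative. One small point to watch in your write-up: in the step $(\mathrm{ii})\Rightarrow(\mathrm{iii})$ you deduce the vanishing of the \emph{symmetrized} contraction norms from the fourth-cumulant identity and then assert that the unsymmetrized norms vanish ``by the comparison above''; but the inequality $\norm{h_t\tilde\otimes_r h_t}_{2q-2r}\le\norm{h_t\otimes_r h_t}_{2q-2r}$ goes the wrong way for this. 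The standard fix is to expand the $r=0$ term $(2q)!\norm{h_t\tilde\otimes_0 h_t}^2_{2q}$ combinatorially, which produces the unsymmetrized norms $\norm{h_t\otimes_r h_t}^2$ directly in the fourth-cumulant identity (see \cite[Lemma 5.2.4]{bluebook}); you allude to this when you say one must keep ``scrupulous track of symmetrizations'', but the dependence should be made explicit.
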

An important consequence of the previous result (in our framework) is the following.
\begin{theorem}
	\label{fourth moment theorem}
	Let $B=(B_x)_{x\in\R^d}$ be a real-valued continuous centered Gaussian field  on $\R^d$, and
	assume that $B$ is stationary and has unit-variance. Assume that $D\subset \R^d$ is compact with ${\rm Vol}(D)>0$.
	Recall the definition (\ref{C}) of $C$.
	Fix also $q\geq 2$, 
	recall the definition (\ref{yqt}) of $Y_{q,t}$ and
	assume ${\rm Var}(Y_{q,t})>0$ for all $t$ large enough.
	If we have, for any $r\in\{1,\dots,q-1\}$, that
	\begin{equation}
		\label{contractions}
		\frac{t^d}{{\rm Var}^2(Y_{q,t})}\int_{\{|u|\le diam(D)t\}^3}|C(x)|^r|C(y)|^r|C(z)|^{q-r}|C(x+y+z)|^{q-r}dxdy dz
	\end{equation}
	converges to $0$ as $t\to\infty$, 
	then $Y_{q,t}/\sqrt{{\rm Var}(Y_{q,t})}\overset{law}{\rightarrow}N(0,1)$ as $t\rightarrow\infty$.
\end{theorem}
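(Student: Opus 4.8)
The plan is to reduce the claimed CLT for $Y_{q,t}/\sqrt{{\rm Var}(Y_{q,t})}$ to an application of the Fourth Moment Theorem of Nualart and Peccati, by identifying $Y_{q,t}$ with a multiple Wiener-It\^o integral and then estimating the $L^2$-norms of its contractions. First I would write $H_q(B_x) = I_q(e_x^{\otimes q})$, where $e_x = \Phi(B_x)\in L^2(\R_+)$ satisfies $\langle e_x,e_y\rangle_{L^2(\R_+)} = C(x-y)$ and $\|e_x\|_{L^2(\R_+)}=1$ (using the identification $(B_x)_x = (W(e_x))_x$). Since $I_q$ is linear and continuous, the (deterministic, Bochner) integral over the compact set $tD$ commutes with $I_q$, giving
\[
Y_{q,t} = \int_{tD} H_q(B_x)\,dx = I_q\!\left(\int_{tD} e_x^{\otimes q}\,dx\right) =: I_q(h_t),
\qquad h_t := \int_{tD} e_x^{\otimes q}\,dx \in L^2_s(\R_+^q).
\]
After normalizing by $\sqrt{{\rm Var}(Y_{q,t})}$ (which is positive for $t$ large by hypothesis), we are exactly in the setting of the Fourth Moment Theorem: $\E[I_q(\widetilde h_t)^2]=1$ with $\widetilde h_t = h_t/\sqrt{{\rm Var}(Y_{q,t})}$, and it suffices to show $\|\widetilde h_t \otimes_r \widetilde h_t\|_{2q-2r}\to 0$ for every $r\in\{1,\dots,q-1\}$.

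Next I would compute the contraction norm explicitly. Using $\langle e_x, e_{x'}\rangle^r\,\langle e_y,e_{y'}\rangle^r\,\langle e_x,e_y\rangle^{q-r}\,\langle e_{x'},e_{y'}\rangle^{q-r} = C(x-x')^r C(y-y')^r C(x-y)^{q-r} C(x'-y')^{q-r}$ and Fubini,
\[
\|h_t\otimes_r h_t\|_{2q-2r}^2 = \int_{(tD)^4} C(x-x')^r C(y-y')^r C(x-y)^{q-r} C(x'-y')^{q-r}\,dx\,dx'\,dy\,dy'.
\]
Bounding $|C|^{\bullet}$ by $|C|^{\bullet}$ pointwise and performing the change of variables to the three difference vectors — e.g.\ $u = x-x'$, and then differences relating $y-y'$, $x-y$, $x'-y'$ (which are not independent: their signed sum telescopes, so one of them is determined as $\pm(x-x')\mp(\text{others})$) — together with $|C|\le 1$ and $g_D \le {\rm Vol}(D)$, collapses one of the four integrations into a factor $t^d\,{\rm Vol}(D)$ (from the "free" translation of the whole configuration inside $tD$, up to boundary effects controlled by compactness), leaving precisely
\[
\|h_t\otimes_r h_t\|_{2q-2r}^2 \;\le\; {\rm const}\cdot t^d \int_{\{|u|\le {\rm diam}(D)t\}^3}|C(x)|^r|C(y)|^r|C(z)|^{q-r}|C(x+y+z)|^{q-r}\,dx\,dy\,dz,
\]
where the relation $x+y+z$ among the three difference variables encodes the telescoping identity among $x-x'$, $y-y'$, $x-y$, $x'-y'$ within the domain $tD$. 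Dividing by ${\rm Var}^2(Y_{q,t})$ gives exactly the quantity in (\ref{contractions}), which is assumed to converge to $0$; hence $\|\widetilde h_t\otimes_r\widetilde h_t\|_{2q-2r}\to 0$ for all $r$, and the Fourth Moment Theorem yields $Y_{q,t}/\sqrt{{\rm Var}(Y_{q,t})}\overset{\rm law}{\to}N(0,1)$.

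\textbf{Main obstacle.} The delicate point is the bookkeeping in the change of variables on the bounded domains $(tD)^4$: one must verify that, after passing to the three independent difference variables, (i) the sign pattern in the last covariance factor is genuinely of the form $C(x+y+z)$ (as opposed to some other signed combination) — this follows from the telescoping identity $(x-x') - (y-y') - (x-y) + (x'-y') = 0$, up to relabeling — and (ii) the "fourth" integration really contributes only an $O(t^d)$ factor, not more, which is where compactness of $D$ (finite diameter, $g_D$ bounded by ${\rm Vol}(D)$) is used to dominate the domain $(tD)^4$ by $t^d\,{\rm Vol}(D)$ times a product over difference-variable balls of radius ${\rm diam}(D)t$. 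Both are routine once set up carefully, but the combinatorics of which difference vector is eliminated must be handled without loss of generality (the bound is symmetric enough that any choice works). No new analytic input beyond the isometry identity for Hermite polynomials and the Fourth Moment Theorem is required.
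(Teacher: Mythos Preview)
Your proposal is correct and follows the same route as the paper: represent $Y_{q,t}=I_q(h_t)$ with $h_t=\int_{tD}e_x^{\otimes q}dx$, expand $\|h_t\otimes_r h_t\|_{2q-2r}^2$ as a four-fold integral of products of powers of $C$ over $(tD)^4$, pass to difference variables so that one integration yields a factor $\le {\rm Vol}(D)t^d$, and conclude via the Fourth Moment Theorem. The only place where the paper is more explicit than you is the change of variables: it takes $x=x_3-x_1$, $y=x_2-x_4$, $z=x_1-x_2$, $a=x_4$ (Jacobian $1$), so that $x_3-x_4=x+y+z$ directly and the remaining integral in $a$ over $tD$ gives the ${\rm Vol}(D)t^d$ factor, with the evenness of $C$ fixing the signs---this removes the ``bookkeeping'' obstacle you flag.
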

\begin{proof}
	Let us first write $Y_{q,t}$ as a $q$-th multiple Wiener-It\^o integral with respect to $B$:
	$$Y_{q,t}=\int_{tD}H_q(B_x)dx=\int_{tD}H_q(W(e_x))dx=\int_{tD}I_q(e_x^{\otimes q})dx=I_q(f_{t,q}),$$ 
	where
	\[
	f_{t,q}=\int_{tD}e_x^{\otimes q}dx.
	\] 
	Since by definition
	\[
	f_{t,q}\otimes_rf_{t,q}=\int_{tD}\int_{tD}C^r(x-y)e_x^{\otimes q-r}e_y^{\otimes q-r}dxdy,
	\]
	we obtain
	\begin{eqnarray*}
		&&\norm{f_{t,q}\otimes_r f_{t,q}}_{2q-2r}^2\\		
		&=&\int_{(tD)^4}C(x_1-x_3)^rC(x_2-x_4)^r C(x_1-x_2)^{q-r}\\
		&&\hskip4cm \times C(x_3-x_4)^{q-r}dx_1dx_2dx_3 dx_4.
	\end{eqnarray*}
	Applying the change of variable $x=x_3-x_1$, $y=x_2-x_4$, $z=x_1-x_2$, $a=x_4$ (whose  Jacobian is equal to 1) and using the symmetry of $C$, we get that
	\begin{eqnarray*}
		&&\norm{f_{t,q}\otimes_r f_{t,q}}_{2q-2r}^2\le {\rm Vol}(D)\\ &&t^d\int_{\{|u|\le {\rm diam}(D)t\}^3}\!\!\!\!\!\!\!|C(x)|^r|C(y)|^r |C(z)|^{q-r}|C(x+y+z)|^{q-r}dxdydz.
	\end{eqnarray*}
	The Fourth Moment Theorem asserts
	that $Y_{q,t}/\sqrt{{\rm Var}(Y_{q,t})}$ converges in distribution to  $N(0,1)$ if (and only if)
	$
	\norm{f_{t,q}\otimes_r f_{t,q}}_{2q-2r}/{\rm Var}(Y_{q,t})\rightarrow 0
	$ for all $r=1,\dots,q-1$.
	The desired conclusion thus follows from (\ref{contractions}) and the previous bound for $\norm{f_{t,q}\otimes_r f_{t,q}}_{2q-2r}^2$. 
\end{proof}

\section{Proof of Theorem \ref{thm:main} when $R\geq 4$ even}\label{step1}

This section is devoted to the proof of Theorem \ref{thm:main} when $R\geq 4$ is even, which is equivalent to say that $\frac{d}{R}\leq \frac{d-1}2$ and $R$ even.
It represents the `easy' part of Theorem \ref{thm:main}.

To ease the exposition, we write in the following proposition the statement obtained when, in 
Theorem \ref{thm:main}, we suppose that $\frac{d}{R}\leq \frac{d-1}2$ and $R$ even.

\begin{proposition}\label{prop:step1}
	Fix $d\geq 2$, let $B=(B_x)_{x\in\R^d}$ be a real-valued continuous centered Gaussian field  on $\R^d$, and
	assume that $B$ is stationary, isotropic and has unit-variance.
	Let $\varphi:\R\to\R$ be such that $\E[\varphi(N)^2]<\infty$ with $N\sim N(0,1)$, let $R$ be the Hermite rank of $\varphi$
	and consider $Y_t$ defined by (\ref{Yt}), where $D$ compact and ${\rm Vol}(D)>0$. Set $m_t=\E[Y_t]$ and $\sigma_t=\sqrt{{\rm Var}(Y_t)}>0$, and 
	recall the definition  (\ref{mu}) of the isotropic spectral measure $\mu$ and the definition (\ref{var chaos bis}) of $w_{R,t}$:
	\[
	w_{R,t}=\int_{\{|z|\le t\}}C^R(z)dz.
	\]
	If (\ref{spectralcond}) holds, if  $R$ even and $\frac{d}{R}\leq \frac{d-1}2$ (i.e. $R\ge4$ and $R$ even), then $\sigma_t^2\asymp t^dw_{R,t}$ and 
	$$
	\frac{Y_t-m_t}{\sigma_t}\overset{\rm law}{\to} N(0,1)\quad \mbox{as $t\to\infty$}.
	$$
\end{proposition}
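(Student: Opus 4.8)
The plan is to extract from the spectral representation (\ref{rho})--(\ref{bd2}) a sharp pointwise decay estimate on $C$ and then to feed it into the two reduction results already available, namely Proposition \ref{Prop5.4} and Theorem \ref{fourth moment theorem}. As a preliminary remark: if $C\in L^R(\R^d)$ we are already in the scope of the Breuer--Major Theorem \ref{BM}, which gives $\sigma_t^2\asymp t^d$; since $R$ is even and $C(0)=1$, in that case $w_{R,t}\to\int_{\R^d}C^R(z)\,dz\in(0,\infty)$, so $\sigma_t^2\asymp t^dw_{R,t}$ as well, and the CLT is the Breuer--Major conclusion. So I may and will assume from now on that $C\notin L^R(\R^d)$, which --- $R$ being even --- is the same as $w_{R,t}\uparrow+\infty$.

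The crucial step is to establish the decay estimate
\[
|C(z)|\ \le\ \kappa\,\min\!\bigl(1,\,|z|^{-d/R}\bigr),\qquad z\in\R^d,
\]
for some constant $\kappa>0$. To this end I would first combine (\ref{bd2}) with the Bessel bound (\ref{boundBessel}) to get $|b_d(r)|\le\kappa_d\min(1,r^{-(d-1)/2})$ for all $r>0$, then insert this into (\ref{rho}) and split the integral against $\mu$ at $s=1/|z|$. On $\{s\le 1/|z|\}$ one has $1\le(s|z|)^{-d/R}$, so that part is bounded by $|z|^{-d/R}\int_0^\infty s^{-d/R}\mu(ds)$, which is finite thanks to the spectral condition (\ref{spectralcond}) (and which, incidentally, forces $\mu(\{0\})=0$). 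On $\{s>1/|z|\}$, the standing hypothesis $\frac dR\le\frac{d-1}2$ is exactly what allows the bound $s^{-(d-1)/2}\le s^{-d/R}\,|z|^{(d-1)/2-d/R}$, whence that part is $\le |z|^{-(d-1)/2}\cdot|z|^{(d-1)/2-d/R}\int_0^\infty s^{-d/R}\mu(ds)\lesssim|z|^{-d/R}$ too. Writing $A_p(t):=\int_{\{|z|\le t\}}|C(z)|^p\,dz$, the estimate yields at once: $C\in L^p(\R^d)$, hence $A_p(t)=O(1)$, for every $p>R$ (in particular $\int_{\R^d}|C|^{R+1}<\infty$); $A_p(t)\asymp t^{d(1-p/R)}$ for $p<R$; and $A_R(t)=w_{R,t}=O(\log t)$, with moreover $w_{R,\lambda t}\sim w_{R,t}$ for each fixed $\lambda>0$, since $w_{R,\lambda t}-w_{R,t}$ stays bounded while $w_{R,t}\to\infty$.

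Next I would reduce to the $R$th chaos. Since $g_D=\mathbf 1_D\ast\mathbf 1_{-D}$ is continuous with $g_D(0)={\rm Vol}(D)>0$ and $\|g_D\|_\infty={\rm Vol}(D)$, and since $C^R\ge0$ ($R$ even), squeezing $g_D(z/t)$ between a positive constant on a small ball $\{|z|\le r_0 t\}$ and $\|g_D\|_\infty$ and using $w_{R,\lambda t}\asymp w_{R,t}$ gives $v_{R,t}\asymp w_{R,t}\to\infty$, hence $t^{-d}{\rm Var}(Y_{R,t})=R!\,v_{R,t}\to\infty$. Combined with $\int_{\R^d}|C|^{R+1}<\infty$, Proposition \ref{Prop5.4} applied with $M=R+1$ then gives $\sigma_t^2\asymp{\rm Var}(Y_{R,t})\asymp t^dw_{R,t}$ and reduces the CLT for $(Y_t-m_t)/\sigma_t$ to the CLT for $Y_{R,t}/\sqrt{{\rm Var}(Y_{R,t})}$.

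Finally I would invoke Theorem \ref{fourth moment theorem} with $q=R$: it is enough that $t^dI_r(t)/{\rm Var}^2(Y_{R,t})\to0$ for each $r\in\{1,\dots,R-1\}$, where $I_r(t)$ is the triple integral in (\ref{contractions}). Carrying out the $z$-integration and exploiting that the domain of integration is compact, $I_r(t)$ is controlled by convolutions of $f_r:=|C|^r\mathbf 1_{\{|z|\le ct\}}$ and $g_r:=|C|^{R-r}\mathbf 1_{\{|z|\le ct\}}$ (with $c={\rm diam}(D)$), which yields both $I_r(t)\lesssim\|f_r\|_2^2\|g_r\|_1^2=A_{2r}(t)A_{R-r}(t)^2$ and, symmetrically, $I_r(t)\lesssim\|f_r\|_1^2\|g_r\|_2^2=A_r(t)^2A_{2(R-r)}(t)$. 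Using the first bound when $r\le R/2$ and the second when $r\ge R/2$, the exponents recorded above give $I_r(t)\lesssim t^d$ when $r\neq R/2$ and $I_r(t)\lesssim t^dw_{R,t}$ when $r=R/2$; since ${\rm Var}^2(Y_{R,t})\asymp t^{2d}w_{R,t}^2$ and $w_{R,t}\to\infty$, in every case $t^dI_r(t)/{\rm Var}^2(Y_{R,t})=O(1/w_{R,t})\to0$. Theorem \ref{fourth moment theorem} then gives the chaos-$R$ CLT, and the previous paragraph concludes. The heart of the proof --- and the step I expect to be most delicate --- is the decay estimate: it is there, and essentially only there, that the hypotheses $\frac dR\le\frac{d-1}2$ and (\ref{spectralcond}) are used together, which is precisely why the cases $R=2$ and $R=1$, where this estimate fails, require the separate Fourier-analytic treatment of Sections \ref{step2} and \ref{rank1section}.
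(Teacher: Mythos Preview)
Your proof is correct and follows the same architecture as the paper's: derive the pointwise decay $|C(z)|\lesssim|z|^{-d/R}$ from the spectral condition via (\ref{rho})--(\ref{bd2}) and (\ref{boundBessel}) (this is the content of Lemma~\ref{lm3.1}), then feed it into Proposition~\ref{Prop5.4} and Theorem~\ref{fourth moment theorem} (packaged in the paper as Proposition~\ref{partial}).

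A few implementation differences are worth noting. For the decay estimate the paper takes the shorter route of writing $r^{\beta}\rho(r)=\int (rs)^{\beta}b_d(rs)\,s^{-\beta}\mu(ds)$ and observing that $u\mapsto u^{\beta}b_d(u)$ is bounded whenever $\beta\le\frac{d-1}{2}$; your splitting at $s=1/|z|$ works but is slightly heavier. Conversely, for $v_{R,t}\asymp w_{R,t}$ the paper invokes the external doubling result of Gorbachev--Tikhonov, whereas you obtain $w_{R,\lambda t}-w_{R,t}=O(1)$ directly from $C^R(z)\lesssim|z|^{-d}$, which is a cleaner and self-contained argument here. For the contractions the paper uses the elementary inequality $u^rv^{R-r}\le u^R+v^R$ to reach $\widetilde w_{r,t}w_{R,t}\widetilde w_{R-r,t}$, while you go through Young's inequality to obtain $A_{2r}(t)A_{R-r}(t)^2$ (or its mirror); both give the same $O(1/w_{R,t})$ rate. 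One small slip: the claim $A_p(t)\asymp t^{d(1-p/R)}$ for $p<R$ is only an $O$, since you have no lower bound on $|C|$; fortunately only the upper bound is used downstream.
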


The goal of this Section \ref{step1} is to prove Proposition \ref{prop:step1}.
As we will see, it will be a direct consequence of Lemma \ref{lm3.1} and Proposition \ref{partial} below.

\medskip

We start with Lemma \ref{lm3.1}.

\begin{lemma}\label{lm3.1}
	Let $\rho$ and $\mu$ be associated as in (\ref{rho}) and consider an exponent $\beta\in (0,\frac{d-1}2]$.
	If $\int_0^\infty s^{-\beta}\mu(ds)<\infty$, then $\sup_{r\in\R_+}r^\beta|\rho(r)|<\infty$.
\end{lemma}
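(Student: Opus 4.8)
The plan is to express $\rho(r)$ via (\ref{rho}) as $\rho(r)=\int_0^\infty b_d(rs)\,\mu(ds)$ and then bound $r^\beta|\rho(r)|$ by splitting the integral according to whether $rs\le 1$ or $rs>1$. Recall from (\ref{bd2}) that $b_d(r)=c_d\,r^{1-d/2}J_{d/2-1}(r)$. Two elementary facts about $b_d$ will drive the estimate: first, $b_d$ is bounded on $\R_+$ (indeed $b_d(0)=1$ by (\ref{bd}), and $b_d$ is continuous), so $|b_d(u)|\le K_1$ for all $u\ge 0$; second, by the decay bound (\ref{boundBessel}), $\sup_{u}\sqrt u\,|J_{d/2-1}(u)|<\infty$, hence $|b_d(u)|=c_d\,u^{1-d/2}|J_{d/2-1}(u)|\le K_2\,u^{1/2-d/2}=K_2\,u^{-(d-1)/2}$ for $u>0$. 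Combining, for any exponent $\gamma\in[0,(d-1)/2]$ we get the single interpolated bound $|b_d(u)|\le K\,u^{-\gamma}$ for all $u>0$ (when $u\le1$ use boundedness together with $u^{-\gamma}\ge1$; when $u\ge1$ use the decay bound together with $u^{-\gamma}\ge u^{-(d-1)/2}$).

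Now apply this with $\gamma=\beta$, which is legitimate precisely because the hypothesis is $\beta\in(0,(d-1)/2]$. Then
\[
r^\beta|\rho(r)|\le r^\beta\int_0^\infty |b_d(rs)|\,\mu(ds)\le r^\beta\int_0^\infty K\,(rs)^{-\beta}\,\mu(ds)=K\int_0^\infty s^{-\beta}\,\mu(ds),
\]
which is finite by assumption and, crucially, independent of $r$. Taking the supremum over $r\in\R_+$ gives $\sup_{r}r^\beta|\rho(r)|\le K\int_0^\infty s^{-\beta}\mu(ds)<\infty$, as desired. (One should note separately that the integrand is also controlled near $s=0$ for fixed $r$ since $b_d$ is bounded there and $\mu$ is finite, so there is no issue of the integral defining $\rho(r)$ being divergent; the substantive point is uniformity in $r$.)

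I do not expect a genuine obstacle here: the only point requiring a little care is justifying the interpolated pointwise bound $|b_d(u)|\le K u^{-\beta}$ uniformly for $u\in(0,\infty)$, i.e. handling the small-$u$ and large-$u$ regimes separately and checking that $\beta\le(d-1)/2$ is exactly what makes the large-$u$ (Bessel decay) regime give the right power. Everything else is Fubini/Tonelli (all integrands are nonnegative, so this is unproblematic) and the change of variable $u=rs$ inside the bound. The finiteness of $\int_0^\infty s^{-\beta}\mu(ds)$ is hypothesized, so no further work is needed there.
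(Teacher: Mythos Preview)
Your proof is correct and follows essentially the same route as the paper: both arguments reduce to the pointwise bound $\sup_{u>0}u^\beta|b_d(u)|<\infty$ (equivalently, $|b_d(u)|\le K u^{-\beta}$), obtained by combining the boundedness of $b_d$ with the Bessel decay $|b_d(u)|\lesssim u^{-(d-1)/2}$ and the hypothesis $\beta\le(d-1)/2$, and then insert this into the spectral representation (\ref{rho}). The only cosmetic difference is that the paper handles small $r$ separately via $|\rho|\le 1$, whereas your interpolated bound covers all $r$ uniformly in one stroke.
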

\noindent
{\it Proof}. Since $\rho(|x|)=\mathbb{E}[B_0B_x]$ for all $x\in\R^d$, we deduce from Cauchy-Schwarz that $|\rho|$ is bounded by 1,
and thus  $\sup_{r\in [0,T]}r^\beta|\rho(r)|<\infty$ for all fixed $T>0$.

On the other hand, using the representation (\ref{rho}), we can write
$$r^{\beta}\rho(r) = \int_0^\infty (rs)^\beta b_d(rs) \frac{\mu(ds)}{s^\beta}. $$
But $J_{\frac{d}2-1}$ is bounded and satisfies $J_{\frac{d}2-1}(u)=O(u^{-1/2})$ as $u\to\infty$ (see (\ref{boundBessel})).
So $\sup_{u\in\R_+}u^\beta |b_d(u)|<\infty$, and the desired conclusion follows.\qed

\bigskip

Now, let us state and prove Proposition \ref{partial}, which may be of independent interest.

\begin{proposition}\label{partial}
	Let $B=(B_x)_{x\in\R^d}$ be a real-valued continuous centered Gaussian field  on $\R^d$, and
	assume that $B$ is stationary and has unit-variance (note that we did not assume isotropy and $d\ge2$).
	Let $\varphi:\R\to\R$ be such that $\E[\varphi(N)^2]<\infty$ with $N\sim N(0,1)$, let $R$ be the Hermite rank of $\varphi$
	and consider $Y_t$ defined by (\ref{Yt}), where $D$ compact and ${\rm Vol}(D)>0$. Set $m_t=\E[Y_t]$ and $\sigma_t=\sqrt{{\rm Var}(Y_t)}>0$, and
	recall the definition  (\ref{C}) of the covariance $C$ and the definition (\ref{var chaos bis}) of $w_{R,t}$.
	
	If $R$ is even, if $\lim_{t\to\infty}w_{R,t}=\infty$ and if 
	\begin{equation}\label{bounded}
		\sup_{x\in\R^d} |x|^{d/R}\big|C(x)\big|<\infty,
	\end{equation}
	then $\sigma_t^2 \asymp t^dw_{R,t} $ and
	\begin{equation}\label{todopartial}
		\frac{Y_t-m_t}{\sigma_t}\overset{\rm law}{\to} N(0,1)\quad \mbox{as $t\to\infty$}.
	\end{equation}
\end{proposition}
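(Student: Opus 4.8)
The plan is to use the ``Reduction to the $R$th chaos'' result (Proposition \ref{Prop5.4}) to reduce the whole statement to proving a CLT for the single chaos $Y_{R,t}$, and then to apply the reformulated Fourth Moment Theorem (Theorem \ref{fourth moment theorem}) to $Y_{R,t}$. First I would check the hypotheses of Proposition \ref{Prop5.4}: $R$ is even by assumption, $t^{-d}{\rm Var}(Y_{R,t}) = R!\,v_{R,t}$ and since $R$ is even we have $v_{R,t} = \widetilde v_{R,t}$, which grows like $w_{R,t}$ up to constants because $g_D(z/t)$ is bounded above by ${\rm Vol}(D)$ and bounded below by a positive constant on a ball of radius $\epsilon t$ for small $\epsilon$ (using ${\rm Vol}(D)>0$); hence $t^{-d}{\rm Var}(Y_{R,t})\to\infty$ by the assumption $w_{R,t}\to\infty$, and moreover $\sigma_t^2\asymp t^d w_{R,t}$ will follow once we also know the integrability $\int_{\R^d}|C(x)|^M\,dx<\infty$ for some $M\ge R+1$. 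That integrability is exactly where the pointwise bound (\ref{bounded}) enters: $|C(x)|^M \le {\rm const}\,|x|^{-Md/R}$ for large $|x|$, and choosing $M$ large enough (any $M\ge R+1$ with $Md/R > d$, e.g.\ $M=R+1$ works since $(R+1)d/R>d$) makes this integrable at infinity, while near the origin $|C|\le 1$ is harmless. So Proposition \ref{Prop5.4} applies and it remains only to prove $Y_{R,t}/\sqrt{{\rm Var}(Y_{R,t})}\overset{\rm law}{\to}N(0,1)$.

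For the latter I would invoke Theorem \ref{fourth moment theorem} with $q=R$: it suffices to show that, for each $r\in\{1,\dots,R-1\}$,
\[
\frac{t^d}{{\rm Var}^2(Y_{R,t})}\int_{\{|u|\le {\rm diam}(D)t\}^3}|C(x)|^r|C(y)|^r|C(z)|^{R-r}|C(x+y+z)|^{R-r}\,dx\,dy\,dz \longrightarrow 0.
\]
Since ${\rm Var}(Y_{R,t}) \asymp t^d w_{R,t}$, the prefactor is $\asymp t^{-d} w_{R,t}^{-2}$, so I must show the triple integral is $o(t^d w_{R,t}^2)$. The strategy is to bound the integrand using (\ref{bounded}): each factor $|C(\cdot)|$ is at most $\min(1,{\rm const}\,|\cdot|^{-d/R})$. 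The key observation is that $r + (R-r) = R$ factors involve the three independent variables and one more factor involves $x+y+z$. I would integrate out the $x+y+z$ dependence or, more cleanly, split: by Hölder/Young-type arguments one reduces the four-fold product structure to comparisons with powers of $w_{R,t}=\int_{\{|z|\le t\}}C^R(z)\,dz$ and with $t^d$. Concretely, one expects to bound the integral by something like $t^d \cdot (\int_{\{|z|\le {\rm diam}(D)t\}}|C(z)|^{R}\,dz)\cdot(\text{a subcritical factor that is } o(w_{R,t}))$, the point being that the exponents $r$ and $R-r$ are each strictly between $0$ and $R$, so powers like $\int |C|^{2r}$ and $\int |C|^{2(R-r)}$ over the ball of radius $t$ are controlled by the subcriticality from (\ref{bounded}): since $|C(x)|^p \le {\rm const}\,|x|^{-pd/R}$, such an integral over $\{|x|\le t\}$ is $O(t^{d-pd/R})$ when $p<R$ and $O(\log t)$ or $O(1)$ when $p\ge R$, and in all cases it is $o(t^d)$ away from $p$ small — one has to track that the combined bookkeeping yields a genuine little-$o$ against $t^d w_{R,t}^2$.

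The main obstacle I anticipate is precisely this last combinatorial/analytic estimate of the contraction integral: one must carefully distribute the four factors $|C(x)|^r$, $|C(y)|^r$, $|C(z)|^{R-r}$, $|C(x+y+z)|^{R-r}$ among the integration variables — for instance using the substitution $w=x+y+z$ and integrating in $x,y$ first with two of the four factors, then in $z$ (or $w$) — so that each partial integral is a convergent or mildly divergent power of $t$, and the product beats $t^d w_{R,t}^2$. The subcriticality $d/R$ in (\ref{bounded}) is what guarantees there is ``room to spare'': when $r$ is close to $0$ one factor is nearly constant but then $R-r$ is close to $R$ and that factor's integral is essentially $w_{R,t}$, so the worst cases balance out. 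I would handle this by a case split on whether each of $2r$, $2(R-r)$ is $<R$, $=R$, or $>R$ and assemble the bounds; in every case the exponent of $t$ coming out is strictly smaller than the exponent $d$ appearing in $t^d w_{R,t}^2$ (after accounting for the at-most-logarithmic growth of $w_{R,t}$, which is what the hypothesis $\frac dR\le\frac{d-1}2$, i.e.\ $R\ge4$, secretly ensures via $w_{R,t}=O(\log t)$ or even $O(1)$ when $R\ge 5$). This is the heart of the ``easy'' part and the only place where real work is needed.
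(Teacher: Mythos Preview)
Your overall architecture is exactly the paper's: verify the hypotheses of Proposition~\ref{Prop5.4} (using that $R$ even gives $v_{R,t}=\widetilde v_{R,t}$, and that (\ref{bounded}) with $M=R+1$ gives $C\in L^{R+1}$), then feed $Y_{R,t}$ into Theorem~\ref{fourth moment theorem}. Two small points and one real gap.

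\medskip

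\emph{Small points.} First, Proposition~\ref{partial} does \emph{not} assume $R\ge 4$; you smuggle that in at the end. The statement must work for any even $R$, and the paper's argument does. Second, to get $v_{R,t}\asymp w_{R,t}$ you compare integrals over balls of radii $\delta_D t$, $t$, and ${\rm diam}(D)t$; passing between these needs the doubling condition for nonnegative positive-definite functions (applied to $C^R$), which the paper cites from \cite{Gorbachev}.

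\medskip

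\emph{The real gap.} Your contraction estimate does not close. Take $R=4$, $r=1$. Your Cauchy--Schwarz route produces factors like $\widetilde w_{1,t}^2\,\widetilde w_{2(R-r),t}$, and with (\ref{bounded}) one has $\widetilde w_{1,t}=O(t^{3d/4})$, $\widetilde w_{6,t}=O(1)$, so the triple integral is bounded by $O(t^{3d/2})$. But you need $o(t^d w_{R,t}^2)$, and (\ref{bounded}) only forces $w_{4,t}=O(\log t)$; clearly $t^{3d/2}\not=o(t^d(\log t)^2)$. No symmetric pairing rescues this: whenever $r$ is close to $1$ or $R-1$ the Cauchy--Schwarz loss is polynomial in $t$, which the merely-divergent $w_{R,t}$ cannot absorb.

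The paper's fix is a single elementary inequality that you are missing: for $u,v\ge 0$ and $0<r<R$,
\[
u^r v^{R-r}\le u^R+v^R.
\]
Apply it with $u=|C(y)|$, $v=|C(z)|$ (and use the $y\leftrightarrow z$ symmetry of the domain) to replace $|C(y)|^r|C(z)|^{R-r}$ by $2|C(y)|^R$. After the change of variables $a=x+y+z$ the integral decouples into $\widetilde w_{r,t}\cdot w_{R,t}\cdot \widetilde w_{R-r,t}$. Now (\ref{bounded}) gives $\widetilde w_{q,t}=O(t^{d-qd/R})$ for $q<R$, and the two side exponents add \emph{exactly} to $d$, so the triple integral is $O(t^d w_{R,t})$. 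Hence the ratio (\ref{contractions}) is $O(1/w_{R,t})\to 0$, with no case split and no appeal to $R\ge 4$.
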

\noindent
{\it Proof}. The proof is divided into several steps.
Recall the definition (\ref{yqt}) of $Y_{q,t}$.

\bigskip

{\it Step 1}. We claim that if $R$ even, then
\begin{equation}\label{**}
	v_{R,t}\asymp w_{R,t},
\end{equation} 
where $v_{R,t}$ is defined by (\ref{var chaos}).
To prove it, below we let $c>0$ denote a constant independent of $t$ whose value can change from one instance to another.
Using (\ref{var chaos}), we have on the one hand
\begin{eqnarray*}
	v_{R,t}&&=\frac{1}{R!}t^{-d}{\rm Var}(Y_{R,t}) = \int_{\{|z|\le{\rm diam}(D)t\}} C(z)^R g_D\left(\frac{z}{t}\right)dz\\&&\le {\rm Vol}(D)w_{R,{\rm diam}(D)t}\le c \hspace{0.5mm} w_{R,t}
\end{eqnarray*}
where the last equality follows from the so-called doubling conditions at the origin for non-negative positive definite functions (see \cite{Gorbachev}).

On the other hand, $g_D$ is uniformly continuous (in particular continuous in $0$) according to \cite{Galerne}. 
We deduce that $g_D\left(\frac{z}{t}\right)\geq {\rm Vol}(D)-\frac{1}{2}{\rm Vol}(D)=\frac{1}{2}{\rm Vol}(D)$ for all $z\in\{|z|\le \delta_{D}t\}$ for some $\delta_D>0$ depending only on $D$ and for every  $t>0$.
As a result,
\begin{eqnarray*}
	v_{R,t}=\frac{1}{R!}t^{-d} {\rm Var}(Y_{R,t}) &\geq& \int_{\{|z|\le\delta_D t\}} C(z)^R g_D\left(\frac{z}{t}\right)dz\\
	&\geq & \frac12 {\rm Vol}(D) w_{R,\delta_D t} \geq cw_{R,t},
\end{eqnarray*}
where the last inequality follows again from the doubling conditions proved in \cite{Gorbachev}. The announced claim (\ref{**}) follows.
\bigskip

{\it Step 2}. Since $w_{R,t}\to\infty$, it follows from Step 1 that $t^{-d}{\rm Var}(Y_{R,t})\to\infty$.
Moreover, since (\ref{bounded}) holds, we have that $\int_{\R^d}|C(x)|^{R+1}dx<\infty$.
Applying Proposition \ref{Prop5.4}, we obtain that $\sigma_t^2 \asymp t^dw_{R,t} $ and 
also that (\ref{todopartial}) will follow if we prove that $Y_{R,t}/\sqrt{{\rm Var}(Y_{R,t})}\to N(0,1)$.

\bigskip

{\it Step 3}. In this last step, we prove that $Y_{R,t}/\sqrt{{\rm Var}(Y_{R,t})}\to N(0,1)$, which will complete the proof of Proposition \ref{partial},
see the conclusion of Step 2. 
To do this, we use Theorem \ref{fourth moment theorem}, following the same approach as in \cite[Lemma 8.1]{NPR}. 
Fix a contraction index $r\in\{1,\dots,R-1\}$.
Using the inequality $u^rv^{R-r}\le u^{R}+v^{R}$ for $u,v\in\mathbb{R}_+$, we have
\begin{eqnarray*}
	&&\int_{\{|u|\le {\rm diam}(D)t\}^3}|C(x)|^r|C(y)|^r|C(z)|^{R-r}|C(x+y+z)|^{R-r}dxdydz \\
	&\leq&2\int_{\{|u|\le {\rm diam}(D)t\}^3}|C(x)|^r|C(y)|^R|C(x+y+z)|^{R-r}dxdydz\\
	&\leq&c \widetilde{w}_{r,t}w_{R,t}\widetilde{w}_{R-r,t}
\end{eqnarray*}
where the last inequality follows from the change of variable $a=x+y+z$ and doubling conditions in \cite{Gorbachev}, and 
\[
\widetilde{w}_{q,t}=\int_{\{|z|\le t\}}|C(z)|^qdz.
\]
We deduce that (\ref{contractions}) is bounded by
\begin{eqnarray*}
	\frac{c}{t^d}\,\frac{\widetilde{w}_{r,t}w_{R,t}\widetilde{w}_{R-r,t}}{v_{R,t}^2} = O\left(t^{-d}\frac{\widetilde{w}_{r,t}\widetilde{w}_{R-r,t}}{w_{R,t}}\right) ,
\end{eqnarray*}
where the big O comes from (\ref{**}).
We deduce from (\ref{bounded}) that $\widetilde{w}_{q,t}= O\left(t^{d-d\frac{q}{R}}\right)$ for $q<R$. Using this last fact, we get
$$
O\left(t^{-d}\frac{\widetilde{w}_{r,t}\widetilde{w}_{R-r,t}}{w_{R,t}}\right) = O\left(t^{-d}\frac{t^{d-d\frac{r}{R}}t^{d-d\frac{R-r}{R}}}{w_{R,t}}\right)= O\left(\frac{1}{w_{R,t}}\right)
$$
and then (\ref{contractions}) converges to $0$ as $t\to\infty$. Therefore, the convergence $Y_{R,t}/\sqrt{{\rm Var}(Y_{R,t})}\to N(0,1)$ follows from Theorem \ref{fourth moment theorem}, and completes the proof of Proposition \ref{partial}.\qed

\bigskip

We can now proceed with the proof of Proposition \ref{prop:step1}, that is, 
the proof of Theorem \ref{thm:main} when $R\geq 4$ and $R$ even (or equivalently, $\frac{d}{R}\leq \frac{d-1}2$ and $R$ even).

\bigskip
\noindent
{\it Proof of Proposition \ref{prop:step1}}.
If $w_{R,t}$ is convergent, then the result follows applying the Breuer-Major theorem \ref{BM}. So we can assume that $w_{R,t}\rightarrow\infty$. Now, by comparing the statements of Proposition  \ref{prop:step1} and Proposition \ref{partial}, we see that we are left to check that, if (\ref{spectralcond}) holds with $\frac dR\le \frac{d-1}{2}$, then (\ref{bounded}) holds.
Since this is a mere application of Lemma \ref{lm3.1} with $\beta= \frac{d}{R}$, the proof of Proposition \ref{prop:step1} is complete.\qed

\section{Proof of Theorem \ref{thm:main} when $R=2$}\label{step2}

This section is devoted to the proof of Theorem \ref{thm:main} when $R=2$.
It requires the introduction of novel ideas with respect to the existing literature, mainly Fourier arguments. 

To ease the exposition, we write in the following proposition the statement obtained when, in 
Theorem \ref{thm:main}, we additionally suppose that $R=2$ and $\left|\mathcal{F}[\mathbf{1}_{D}](x)\right|=O\left(\frac{1}{|x|^{d/2}}\right)$.

\begin{proposition}\label{prop:step2}
	Fix $d\geq 2$, let $B=(B_x)_{x\in\R^d}$ be a real-valued continuous centered Gaussian field  on $\R^d$, and
	assume that $B$ is stationary, isotropic and has unit-variance.
	Let $\varphi:\R\to\R$ be such that $\E[\varphi(N)^2]<\infty$ with $N\sim N(0,1)$, and assume that $\varphi$ has Hermite rank $R=2$.
	Consider $Y_t$ defined by (\ref{Yt}), where $D$ compact and ${\rm Vol}(D)>0$. Set $m_t=\E[Y_t]$ and $\sigma_t=\sqrt{{\rm Var}(Y_t)}>0$, and 
	recall the definition  (\ref{mu}) of the isotropic spectral measure $\mu$ and the definition (\ref{var chaos bis}) of $w_{q,t}$.
	
	If $|\mathcal{F}[\mathbf{1}_{D}](x)|=O\left(\frac{1}{|x|^{d/2}}\right)$ as $|x|\rightarrow\infty$\footnote{Since we assumed $D$ to be compact, this happens for example when $D=\overline{\mathring{D}}$ and $\partial D$ is smooth with non-vanishing Gaussian curvature, see e.g. \cite{Brandolini}.}, with $\mathcal{F}$ the Fourier transform, 
	and if the spectral condition holds
	\begin{equation}\label{spectralcondR=2}
		\int_0^\infty s^{-\frac{d}{2}}\mu(ds)<\infty 
	\end{equation}
	then $\sigma_t^2 \asymp t^dw_{2,t}$ and
	$$
	\frac{Y_t-m_t}{\sigma_t}\overset{\rm law}{\to} N(0,1)\quad \mbox{as $t\to\infty$}.
	$$
\end{proposition}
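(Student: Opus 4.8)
\textbf{Proof plan for Proposition \ref{prop:step2}.} The strategy parallels the $R\ge 4$ even case: reduce to the second chaos via Proposition \ref{Prop5.4}, then verify the Fourth Moment criterion (Theorem \ref{fourth moment theorem}) for $Y_{2,t}$, but now the spectral condition (\ref{spectralcondR=2}) is exactly the borderline exponent $\beta=\frac d2=\frac{d-1}2+\frac12$, so Lemma \ref{lm3.1} only gives $\sup_r r^{(d-1)/2}|\rho(r)|<\infty$, \emph{not} the pointwise bound $|C(x)|\lesssim |x|^{-d/2}$ needed to invoke Proposition \ref{partial} directly. The point of this section is precisely that the missing half-power of decay is recovered \emph{on average} thanks to Bessel oscillation and the geometry of $D$.

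First I would set up the Fourier machinery. Using (\ref{fourier}) and the explicit form (\ref{rho})--(\ref{bd2}) of the covariance of an isotropic field, write $C(z)^2=\rho(|z|)^2$ and express the relevant integrals $w_{2,t}=\int_{\{|z|\le t\}}\rho(|z|)^2\,dz$ and $v_{2,t}$ using the isotropic spectral measure $\mu$; the product $\rho(rs)\rho(rs')$ becomes a double integral in $(s,s')$ against $b_d(rs)b_d(rs')$, and integrating in $r$ produces a Bessel-type kernel $K_d(s,s')$. The spectral condition (\ref{spectralcondR=2}) is exactly what makes the resulting double integral $\int\!\int s^{-d/2}(s')^{-d/2}\mu(ds)\mu(ds')$-type expressions finite (this is where one proves $w_{2,t}\to\infty$ or controls its growth, and establishes $v_{2,t}\asymp w_{2,t}$ as in Step 1 of Proposition \ref{partial}, via doubling conditions). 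Then I would estimate, for $q=2+1=3$ down to $M=3$ say, $\int_{\R^d}|C(x)|^3dx<\infty$: here $|C(x)|^3=|\rho(|x|)|^3\le |\rho(|x|)|\cdot|\rho(|x|)|^2\lesssim |x|^{-(d-1)/2}\cdot|\rho(|x|)|^2$ and integrating in polar coordinates one needs $\int_1^\infty r^{d-1}\cdot r^{-(d-1)/2}\rho(r)^2\,dr<\infty$; this should again follow from (\ref{spectralcondR=2}) after a Fubini/Bessel computation. This secures the hypotheses of Proposition \ref{Prop5.4} (with $M=3$), reducing everything to $Y_{2,t}/\sqrt{\mathrm{Var}(Y_{2,t})}\overset{\rm law}{\to}N(0,1)$.

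The heart of the proof is the Fourth Moment bound: I must show that for the single contraction index $r=1$ (the only one available when $q=2$), the quantity (\ref{contractions}),
\[
\frac{t^d}{\mathrm{Var}^2(Y_{2,t})}\int_{\{|u|\le {\rm diam}(D)t\}^3}|C(x)|\,|C(y)|\,|C(z)|\,|C(x+y+z)|\,dx\,dy\,dz,
\]
tends to $0$. Since $\mathrm{Var}(Y_{2,t})\asymp t^d w_{2,t}$ with $w_{2,t}$ possibly $o(t^{\text{something}})$, the naive bound $|C|\le |x|^{-(d-1)/2}$ is \emph{not} enough — one would get $t^d(t^{d/2})^2 t^d/(t^d w_{2,t})^2$ which need not vanish. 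Instead I would open up the convolution structure: the integral over $(x,y,z)$ with the translate $x+y+z$ is, after introducing Fourier transforms, controlled by $\int |\widehat{(\mathbf 1_{tD})}|^2$-type factors against $G\otimes G\otimes G\otimes G$. The key gain is the hypothesis $|\mathcal F[\mathbf 1_D](x)|=O(|x|^{-d/2})$: rescaling, $|\mathcal F[\mathbf 1_{tD}](\xi)|=t^d|\mathcal F[\mathbf 1_D](t\xi)|\lesssim t^{d/2}|\xi|^{-d/2}$, which provides the extra decay that compensates exactly for the half-power lost in Lemma \ref{lm3.1}. Concretely I would bound (\ref{contractions}) by writing each $|C|$ via Bochner and each region $\{|u|\le{\rm diam}(D)t\}$ by a smooth cutoff comparable to $\mathbf 1_{tD}\ast\mathbf 1_{-tD}$, collapsing the six-fold spectral integral and using (\ref{spectralcondR=2}) together with the Fourier decay of $\mathbf 1_D$ to get a bound of order $t^{-d}\cdot(\text{integrable in }\mu)\cdot w_{2,t}/w_{2,t}^2=O(1/w_{2,t})\to 0$.

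\textbf{Main obstacle.} The delicate step is the last one: turning the $L^1$-type bound on the contraction integral — where absolute values destroy the Bessel oscillation — into a genuinely convergent estimate. One cannot simply use $|C(x)|\lesssim|x|^{-(d-1)/2}$ pointwise; instead one must exploit that the \emph{$L^2$} average $w_{2,t}=\int_{\{|z|\le t\}}C(z)^2dz$ (which keeps $\rho^2\ge 0$, no oscillation loss) is the natural normalizing scale, and that the extra $\mathbf 1_D$-Fourier decay is precisely matched to (\ref{spectralcondR=2}). Carefully tracking the interplay of these three ingredients — spectral finiteness at exponent $d/2$, the $|x|^{-d/2}$ decay of $\widehat{\mathbf 1_D}$, and the self-improving Cauchy--Schwarz/doubling structure of $w_{2,t}$ versus $\widetilde w_{1,t}$ — is where the real work lies, and it is the reason this case needs a separate and more intricate argument than the $R\ge4$ even case.
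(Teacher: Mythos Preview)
Your overall architecture is right --- reduce to $Y_{2,t}$ via Proposition \ref{Prop5.4}, then apply the Fourth Moment Theorem --- and your diagnosis of \emph{why} this case is delicate (Lemma \ref{lm3.1} only delivers the exponent $(d-1)/2$, one half short of $d/2$) matches the paper exactly. A minor correction: your argument for $\int|C|^3<\infty$ fails when $d\in\{2,3\}$ (for $d=2$ the bound $|\rho(r)|\lesssim r^{-1/2}$ gives $r^{d-1}|\rho|^3\lesssim r^{-1/2}$, not integrable at infinity); this is harmless, since one may simply take any $M>2d/(d-1)$ in Proposition \ref{Prop5.4} using the pointwise bound from Lemma \ref{lm3.1}.

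The real gap is in the contraction estimate. You propose to bound the quantity (\ref{contractions}), which already carries four factors of $|C|$, and then to ``write each $|C|$ via Bochner''. This step is not available: Bochner represents $C$, not $|C|$, and once the absolute values are taken the Bessel oscillation you correctly identify as essential is irretrievably lost. The bound (\ref{contractions}) is in fact too crude in this regime: with only $|C(x)|\lesssim|x|^{-(d-1)/2}$ the triple integral there grows like $t^{3(d+1)/2}$, which does not beat $\mathrm{Var}(Y_{2,t})^2/t^d\asymp t^dw_{2,t}^2$ (e.g.\ for Berry's model in $d=2$ one has $w_{2,t}\asymp t$ and the ratio stays bounded away from zero).

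What the paper does instead is go back \emph{before} the absolute-value step of Theorem \ref{fourth moment theorem} and work with the exact contraction norm
\[
\|f_t\otimes_1 f_t\|_2^2=\int_{(tD)^4}C(x_1-x_3)C(x_2-x_4)C(x_1-x_2)C(x_3-x_4)\,dx_1\cdots dx_4
\]
itself. After a change of variables, only \emph{one} factor $C(u+v+w)$ is opened via Bochner (introducing a single spectral variable $x$), while the remaining three are kept as truncated covariances $C_t$ and Fourier-transformed in the spatial variables; this produces a representation (Lemma \ref{suitexp}) in which the exact domain constraint enters through $\mathcal F[\mathbf 1_{D_t(-w)}]$ and $\mathcal F[\mathbf 1_{tD}]$. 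The resulting $dy$-integral is then split according to whether $|x-y|\ge|x|/2$ or $|y|\ge|x|/2$: on the first region one exploits the $|x|^{-1/2}$ Bessel decay of $\mathcal F[C_t]$ combined with the spectral condition (\ref{spectralcondR=2}) (Lemma \ref{boundI}); on the second region one exploits the hypothesis $|\mathcal F[\mathbf 1_D](x)|=O(|x|^{-d/2})$ (Lemma \ref{boundII}). Both pieces come out $\le\mathrm{const}\cdot t^{2d}w_{2,t}^{3/2}$, so dividing by $\mathrm{Var}(Y_{2,t})^2\asymp t^{2d}w_{2,t}^2$ gives $O(w_{2,t}^{-1/2})\to0$. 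The missing idea in your plan is precisely this: retain the \emph{signed} contraction, apply Bochner to a single factor only, and let the geometry of $D$ enter through genuine Fourier transforms of indicators rather than through a box cutoff on which absolute values have already destroyed the cancellation.
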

Before proving Proposition \ref{prop:step2} (which is the goal of this section), let us recall the definition (\ref{yqt}) of $Y_{2,t}$ and let us state and prove some preliminary results.
We start with Lemma \ref{suitexp}, reformulating in a spectral form the norm of the contractions introduced in section \ref{fmt}. Note that the following result has an analogous version for a general $r$th contraction in the case $Y_{q,t}$, but we skip this unnecessary extension for the sake of brevity.
\begin{lemma}\label{suitexp}
	Let $B=(B_x)_{x\in\R^d}$ be a real-valued continuous centered Gaussian field  on $\R^d$, and
	assume that $B$ is stationary and has unit-variance (note that we did not assume isotropy and $d\ge2$).
	Assume that $D\subset \R^d$ is compact with ${\rm Vol}(D)>0$, recall the notions introduced in section \ref{fmt} and write $$Y_{2,t}=\int_{tD}H_2(B_x)dx=I_2\left(f_{t}\right),$$  
	where $$f_{t}=\int_{tD}e_x^{\otimes 2}dx.$$Recall the definition (\ref{C}) of $C$ and define 
	\begin{equation}\label{Ct}
		C_t(u):=C(u)\mathbf{1}_{\{|u|\le {\rm diam}(D)t\}}(u), \quad t>0, \quad u\in\R^d,
	\end{equation}
and for $D\subset \R^d$ compact
\begin{equation}\label{tDu}
	D_t(u):=tD\cap (tD+u), \quad u\in\R^d.
\end{equation}
Finally, recall the definition (\ref{fourier}) of $G$. Then
\begin{eqnarray}
	\norm{f_{t}\otimes_1 f_{t}}_2^2&=&\int_{\mathbb{R}^d}G(dx)
	\int_{\mathbb{R}^d}dy\mathcal{F}[C_t](x-y)\left|\int_{\R^d} C_t(w) e^{i\langle x, w\rangle}, \mathcal{F}\left[\mathbf{1}_{D_t(-w)}\right](y) dw\right|^2 \notag\\
\label{30}
\end{eqnarray}
where $\mathcal{F}$ is the Fourier transform, and 
\begin{align}
	\int_{\R^d} C_t(w) e^{i\langle x, w\rangle} \mathcal{F}\left[\mathbf{1}_{D_t(-w)}\right](y) dw
	=\int_{\mathbb{R}^d} \mathcal{F}[C_t](x-z) \mathcal{F}[\mathbf{1}_{tD}](y-z)\mathcal{F}[\mathbf{1}_{tD}](z)dz.
	\label{31}
\end{align}
\end{lemma}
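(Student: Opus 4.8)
\textbf{Proof plan for Lemma~\ref{suitexp}.}
The starting point is the formula for $\norm{f_t\otimes_1 f_t}_2^2$ already derived in the proof of Theorem~\ref{fourth moment theorem}: writing $f_t=\int_{tD}e_x^{\otimes 2}dx$, one has $f_t\otimes_1 f_t=\int_{(tD)^2}C(x-y)\,e_x\otimes e_y\,dxdy$, whence
\[
\norm{f_t\otimes_1 f_t}_2^2=\int_{(tD)^4}C(x_1-x_3)C(x_2-x_4)C(x_1-x_2)C(x_3-x_4)\,dx_1dx_2dx_3dx_4.
\]
The plan is to massage this quadruple integral into the claimed spectral shape by (i) changing variables so that two of the four $C$-factors become $C_t$ (the truncated covariance, legitimate because every difference of two points of $tD$ has norm at most ${\rm diam}(D)t$), and (ii) replacing the remaining two $C$-factors via the Bochner representation \eqref{fourier} $C(u)=\int_{\R^d}e^{i\langle\lambda,u\rangle}G(d\lambda)$. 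Concretely, I would substitute $w=x_1-x_2$, keep $x_3,x_4$ (or suitable combinations) as the integration variables subject to the domain constraints, and recognize the constraint that a shifted copy of $tD$ meets $tD$ as exactly $\mathbf{1}_{D_t(\cdot)}$ in the notation \eqref{tDu}; the Lebesgue measure of $tD\cap(tD+u)$ is $\mathcal{F}[\mathbf{1}_{D_t(u)}]$ evaluated at $0$, but under the extra oscillatory factors coming from Bochner it is the full Fourier transform $\mathcal{F}[\mathbf{1}_{D_t(-w)}](y)$ that appears. Applying Fubini to pull the $G(dx)$ integral outside and completing the square in the two inner spatial integrals produces the modulus-squared in \eqref{30}; one should double-check that the use of Fubini is justified, which follows from $|C|\le 1$, the compactness of $tD$ and the finiteness of $G$, so that every integrand in sight is absolutely integrable.

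For the second identity \eqref{31}, the idea is simply to expand $\mathbf{1}_{D_t(-w)}=\mathbf{1}_{tD\cap(tD-w)}=\mathbf{1}_{tD}\cdot(\mathbf{1}_{tD}(\cdot+w))$ as a product, so that $\mathcal{F}[\mathbf{1}_{D_t(-w)}](y)$ becomes a convolution in Fourier variables: $\mathcal{F}[\mathbf{1}_{D_t(-w)}](y)=\text{const}\int_{\R^d}\mathcal{F}[\mathbf{1}_{tD}](y-\eta)\,\widehat{\mathbf{1}_{tD}(\cdot+w)}(\eta)\,d\eta$, and the shift $w$ contributes a factor $e^{i\langle\eta,w\rangle}$ (up to sign/normalization conventions). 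Plugging this into $\int_{\R^d}C_t(w)e^{i\langle x,w\rangle}\mathcal{F}[\mathbf{1}_{D_t(-w)}](y)\,dw$ and carrying out the $w$-integral first collapses the two exponentials $e^{i\langle x,w\rangle}e^{i\langle\eta,w\rangle}$ against $C_t(w)$ into $\mathcal{F}[C_t](x+\eta)$ (or $x-z$ after renaming $z=-\eta$), leaving precisely the stated convolution $\int_{\R^d}\mathcal{F}[C_t](x-z)\mathcal{F}[\mathbf{1}_{tD}](y-z)\mathcal{F}[\mathbf{1}_{tD}](z)\,dz$. Again every interchange of integrals is licit because $C_t$ has compact support and $\mathbf{1}_{tD}$ is integrable, so all the relevant Fourier transforms are bounded continuous functions and the iterated integrals are absolutely convergent.

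The only genuinely delicate point is bookkeeping: getting the affine changes of variables, the placement of minus signs in the arguments of the Fourier transforms, and the normalization constants of $\mathcal{F}$ all mutually consistent, so that the two displayed formulas come out exactly as written rather than with spurious constants or reflected arguments. I would therefore fix once and for all the convention $\mathcal{F}[h](\xi)=\int e^{i\langle\xi,x\rangle}h(x)dx$ used elsewhere in the paper (cf. \eqref{fourier}, \eqref{foufou}), track the Jacobian (which is $1$ for the volume-preserving substitutions involved, exactly as in the proof of Theorem~\ref{fourth moment theorem}), and verify the end result by a sanity check at $x=0$, where \eqref{30} must reduce to the real, nonnegative quantity $\int_{(tD)^4}C(x_1-x_2)C(x_3-x_4)\,g$-type expression and \eqref{31} must reduce to $g_{tD}*(\text{something})$; this pins down all signs. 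No new analytic input beyond Bochner's theorem, Fubini, and the compact support of $C_t$ is needed.
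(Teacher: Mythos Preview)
Your plan is essentially the paper's proof: same quadruple integral, an affine change of variables (the paper takes $u=x_1-x_3$, $v=x_3-x_4$, $w=x_4-x_2$, $z=x_2$, Jacobian $1$), truncation to $C_t$, Bochner, then Fourier/Fubini to produce the modulus-squared; your derivation of \eqref{31} via $\mathbf{1}_{D_t(-w)}=\mathbf{1}_{tD}\cdot\mathbf{1}_{tD-w}$ and the convolution rule is exactly what the paper does.

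One correction to your bookkeeping, though: Bochner is applied to only \emph{one} of the four $C$-factors (namely $C(u+v+w)$, which furnishes the single $G(dx)$ in \eqref{30}), while the other \emph{three} truncate to $C_t$. Of those three, the $C_t(v)$-integral is what gets transformed into $\mathcal{F}[C_t](x-y)$ via the product/convolution law, and the remaining $u$- and $w$-integrals are complex conjugates of each other (use that $C_t$ is even and $\overline{\mathcal{F}[\mathbf{1}_A]}=\mathcal{F}[\mathbf{1}_{-A}]$, so that the $u$-integral with $\mathcal{F}[\mathbf{1}_{-D_t(u)}]$ conjugates to the $w$-integral with $\mathcal{F}[\mathbf{1}_{D_t(-w)}]$), which is precisely how the $|\cdot|^2$ appears. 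If you inserted Bochner twice as you suggest, you would land on a $G(dx)\,G(dy)$ structure rather than the $G(dx)\,dy$ shape of \eqref{30}.
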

\noindent
{\it Proof}.
	Proceeding exactly as in the proof of Theorem \ref{fourth moment theorem}, we have
	\begin{eqnarray*}
		\norm{f_{t}\otimes_1 f_{t}}_{2}^2
		&=&\int_{(tD)^4}C(x_1-x_3)C(x_2-x_4)C(x_1-x_2)\\
		&&\hskip4cm \times C(x_3-x_4)dx_1dx_2dx_3 dx_4.
	\end{eqnarray*}
	Applying the change of variable $u=x_1-x_3$, $v=x_3-x_4$, $w=x_4-x_2$, $z=x_2$ we have: $x_2=z$, $x_4=w+x_2=w+z$, $x_3=v+x_4=v+w+z$ and $x_1=u+x_3=u+v+w+z$. Then 
	\begin{eqnarray*}
		&&\norm{f_{t}\otimes_1 f_{t}}_{2}^2\\
		&=&\int_{\mathbb{R}^{3d}} C(u)C(v) C(w)C(u+v+w)\\&&\times \left(\int_{\mathbb{R}^d}\mathbf{1}_{tD}(z)\mathbf{1}_{tD}(w+z)\mathbf{1}_{tD}(v+w+z)\mathbf{1}_{tD}(u+v+w+z)dz\right)du dv dw\\
		&=&\int_{\mathbb{R}^{3d}} C(u)C(v) C(w)C(u+v+w)\\&&\times \left(\int_{\mathbb{R}^d}\mathbf{1}_{tD}(z)\mathbf{1}_{tD}(-w+z)\mathbf{1}_{tD}(-v-w+z)\mathbf{1}_{tD}(-u-v-w+z)dz\right)du dv dw\\
		&=&\int_{\mathbb{R}^{3d}} C(u)C(v) C(w)C(u+v+w)\\&&\times \left(\int_{\mathbb{R}^d}\mathbf{1}_{tD\cap (tD+w)\cap (tD+w+v)\cap(tD+u+v+w)}(z)dz\right)du dv dw\\
		&=&\int_{\R^{3d}} C_t(u)C_t(v) C_t(w)C(u+v+w)\\&&\times {\rm Vol}((tD-w)\cap tD \cap (tD+v)\cap(tD+u+v))du dv dw,
	\end{eqnarray*}
	where in the last equality we used the translation invariance of the Lebesgue measure (subtracting $w$) and the definition (\ref{Ct}), justified by the fact that $tD\cap (tD+a)$ is empty when $|a|>{\rm diam}(D)t$.
	Now recall the definition (\ref{tDu}).
	We have
	\begin{eqnarray*}
		&&\norm{f_{t}\otimes_1 f_{t}}_{2}^2\\
		&=& \int_{\R^{3d}} C_t(u)C_t(v) C_t(w)C(u+v+w){\rm Vol}\left(D_t(-w) \cap (D_t(u)+v)\right)du dv dw\\
		&=& \int_{\R^{3d}}C_t(u)C_t(v) C_t(w)C(u+v+w)\left(\mathbf{1}_{D_t(-w)}\ast\mathbf{1}_{-D_t(u)}\right)(v)du dv dw ,
	\end{eqnarray*}
	where in the last expression we used that 
	\begin{eqnarray*}
		&&{\rm Vol}\left(D_t(-w) \cap (D_t(u)+v)\right)\\
		&=&\int_{\mathbb{R}^d}\mathbf{1}_{D_t(-w)}(z) \mathbf{1}_{D_t(u)}(z-v)dz=\left(\mathbf{1}_{D_t(-w)}\ast\mathbf{1}_{-D_t(u)}\right)(v).
	\end{eqnarray*}
	Now, using the spectral representation (\ref{fourier}) \[
	C(u+v+w)=\int_{\mathbb{R}^d}e^{i\langle x, u+v+w\rangle}G(dx),
	\]
	we have that 
	\begin{align}
		&\norm{f_{t}\otimes_1 f_{t}}_{2}^2\label{28}\\
		=&\int_{\mathbb{R}^d}G(dx)\int_{\R^{2d}}du dw C_t(u)C_t(w) e^{i\langle x, u+w\rangle} \int_{\R^d}e^{i\langle x, v\rangle}C_t(v)\notag\\
		&\times\left(\mathbf{1}_{D_t(-w)}\ast\mathbf{1}_{-D_t(u)}\right)(v)dv.\notag
	\end{align}
	Fix $x$, $u$ and $w$, and let us focus in (\ref{28}) on the integral with respect to $v$.
	Using the properties of the Fourier transform $\mathcal{F}$ with respect to convolution and products, we can write
	\begin{eqnarray*}
		&&\int_{\R^d}e^{i\langle x, v\rangle}C_t(v)(\mathbf{1}_{D_t(-w)}\ast\mathbf{1}_{-D_t(u)})(v)dv=\mathcal{F}[C_t\cdot(\mathbf{1}_{D_t(-w)}\ast\mathbf{1}_{-D_t(u)})](x)\\
		&=&\left(\mathcal{F}[C_t]\ast\mathcal{F}[\mathbf{1}_{D_t(-w)}\ast\mathbf{1}_{-D_t(u)}]\right)(x)=\left(\mathcal{F}[C_t]\ast\left(\mathcal{F}[\mathbf{1}_{D_t(-w)}]\mathcal{F}[\mathbf{1}_{-D_t(u)}]\right)\right)(x)\\
		&=&\int_{\mathbb{R}^d}\mathcal{F}[C_t](x-y)\mathcal{F}[\mathbf{1}_{D_t(-w)}](y)\mathcal{F}[\mathbf{1}_{-D_t(u)}](y)dy.
	\end{eqnarray*}
	Putting everything in (\ref{28}), we get
	\begin{align}
		\norm{f_{t}\otimes_1 f_{t}}_{2}^2\label{29}
		=&\int_{\mathbb{R}^d}G(dx) \int_{\R^{2d}}du dw C_t(u)C_t(w) e^{i\langle x, u+w\rangle} \\
		&\times
		\int_{\mathbb{R}^d}dy\mathcal{F}[C_t](x-y)\mathcal{F}[\mathbf{1}_{D_t(-w)}](y)\mathcal{F}[\mathbf{1}_{-D_t(u)}](y).\notag
	\end{align}
	Exchanging integrals in (\ref{29}) yields
	\begin{align*}
		&\norm{f_{t}\otimes_1 f_{t}}_{2}^2\notag\\
		=&\int_{\mathbb{R}^d}G(dx)
		\int_{\mathbb{R}^d}dy\mathcal{F}[C_t](x-y) \notag\\
		&\times\left(\int_{\R^d}dw C_t(w) e^{i\langle x, w\rangle} \mathcal{F}[\mathbf{1}_{D_t(-w)}](y) \right)\left(\int_{\R^d}du C_t(u) e^{i\langle x, u\rangle} \mathcal{F}[\mathbf{1}_{-D_t(u)}](y)\right)\notag\\
		=&\int_{\mathbb{R}^d}G(dx)
		\int_{\mathbb{R}^d}dy\mathcal{F}[C_t](x-y)\left|\int_{\R^d} C_t(w) e^{i\langle x, w\rangle} \mathcal{F}[\mathbf{1}_{D_t(-w)}](y) dw\right|^2,\notag\\
	\end{align*}
	which is exactly (\ref{30}).
	Now, let us focus on the term
	$
	\int_{\R^d} C_t(w) e^{i\langle x, w\rangle} \mathcal{F}[\mathbf{1}_{D_t(-w)}](y) dw$ in (\ref{30}).
	Using again basic  Fourier analysis and Fubini theorem, we obtain
	\begin{align*}
		&\int_{\R^d} C_t(w) e^{i\langle x, w\rangle} \mathcal{F}[\mathbf{1}_{D_t(-w)}](y) dw= \int_{\R^d} C_t(w) e^{i\langle x, w\rangle} \mathcal{F}[\mathbf{1}_{tD}\mathbf{1}_{tD-w}](y)dw\notag\\
		=&\int_{\R^d} C_t(w) e^{i\langle x, w\rangle} \left( \int_{\mathbb{R}^d} \mathcal{F}[\mathbf{1}_{tD}](y-z)\mathcal{F}[\mathbf{1}_{tD-w}](z)dz\right)dw\notag\\
		=&\int_{\R^d} C_t(w) e^{i\langle x, w\rangle} \left( \int_{\mathbb{R}^d} \mathcal{F}[\mathbf{1}_{tD}](y-z)\mathcal{F}[\mathbf{1}_{tD}](z)e^{i\langle -w,z\rangle}dz\right)dw\notag\\
		=&\int_{\mathbb{R}^d} \mathcal{F}[\mathbf{1}_{tD}](y-z)\mathcal{F}[\mathbf{1}_{tD}](z)\left(\int_{\R^d} C_t(w) e^{i\langle x-z, w\rangle}dw\right)dz\notag\\
		=&\int_{\mathbb{R}^d} \mathcal{F}[C_t](x-z) \mathcal{F}[\mathbf{1}_{tD}](y-z)\mathcal{F}[\mathbf{1}_{tD}](z)dz,
	\end{align*}
which is exactly (\ref{31}). \qed\\

\begin{lemma}\label{boundI}
	Fix $d\ge2$, let $B=(B_x)_{x\in\R^d}$ be a real-valued continuous centered Gaussian field  on $\R^d$, and
	assume that $B$ is isotropic, stationary and has unit-variance.
	Assume that $D\subset\R^d$ is compact with ${\rm Vol}(D)>0$. Recall the definition (\ref{C}) of $C$, the definition (\ref{fourier}) of $G$, the definition (\ref{var chaos bis}) of $w_{2,t}$, the definition (\ref{Ct}) of $C_t$ and the definition (\ref{tDu}) of $D_t(u)$.
	If the spectral condition (\ref{spectralcondR=2}) holds, then  
	\begin{eqnarray*}
		&&\int_{\mathbb{R}^d}G(dx)
		\int_{|x-y|\ge|x|/2}dy|\mathcal{F}[C_t](x-y)|\left|\int_{\R^d} C_t(w) e^{i\langle x, w\rangle} \mathcal{F}[\mathbf{1}_{D_t(-w)}](y) dw\right|^2\\
		&&\le {\rm const}\, w_{2,t}^{3/2}\,  t^{2d}.
	\end{eqnarray*}
\end{lemma}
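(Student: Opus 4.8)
The goal is to bound the ``far'' part of the contraction norm, i.e. the contribution where $|x-y|\ge|x|/2$. Starting from the spectral representation \eqref{30}--\eqref{31}, the plan is to estimate the inner integral $I(x,y):=\int_{\R^d} C_t(w)e^{i\langle x,w\rangle}\mathcal{F}[\mathbf{1}_{D_t(-w)}](y)\,dw$ via \eqref{31}, using the three available ingredients: the decay $|\mathcal{F}[\mathbf{1}_{tD}](z)|=O(t^d(1+t|z|)^{-d/2})$ coming from the hypothesis $|\mathcal{F}[\mathbf{1}_D](x)|=O(|x|^{-d/2})$ together with the scaling $\mathcal{F}[\mathbf{1}_{tD}](z)=t^d\mathcal{F}[\mathbf{1}_D](tz)$; the bound $\int_{\R^d}|\mathcal{F}[C_t](\xi)|\,d\xi$ and $L^2$-estimates on $\mathcal F[C_t]$ controlled by $w_{2,t}$ (note $\|C_t\|_{L^2}^2=\int_{\{|z|\le {\rm diam}(D)t\}}C(z)^2dz\le {\rm const}\,w_{2,t}$ by the doubling conditions in \cite{Gorbachev}, and $\mathcal{F}[C_t]$ is a finite measure convolved with $\mathcal F[\mathbf 1_{\{|w|\le {\rm diam}(D)t\}}]$, so $\|\mathcal{F}[C_t]\|_{L^1}$ is tractable); and the fact that $G$ is a finite measure with the spectral condition \eqref{spectralcondR=2} giving integrability of $|x|^{-d/2}$ near the origin against $G$.

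\textbf{Key steps.} First I would insert \eqref{31} into the left-hand side and expand the square, producing a $G(dx)$-integral of an integral over $y$ (with $|x-y|\ge|x|/2$) of $|\mathcal{F}[C_t](x-y)|$ times a double $z,z'$-integral of $\mathcal{F}[C_t](x-z)\overline{\mathcal{F}[C_t](x-z')}\mathcal{F}[\mathbf 1_{tD}](y-z)\mathcal F[\mathbf 1_{tD}](z)\overline{\mathcal F[\mathbf 1_{tD}](y-z')}\,\overline{\mathcal F[\mathbf 1_{tD}](z')}$. Next, I would bound the moduli crudely and perform the $y$-integral first: on the region $|x-y|\ge|x|/2$ one can use $|\mathcal F[\mathbf 1_{tD}](y-z)|\le {\rm const}\, t^d(1+t|y-z|)^{-d/2}$ and integrate in $y$; since $d/2<d$ this integral over $y\in\R^d$ of a single factor $(1+t|y-z|)^{-d/2}$ diverges, so the key is that we keep the factor $|\mathcal F[C_t](x-y)|$, whose $L^1$ norm is finite, and combine it (Young/Cauchy--Schwarz) with one copy of $\mathcal F[\mathbf 1_{tD}](y-z)$ or $\mathcal F[\mathbf 1_{tD}](y-z')$. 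Concretely I would use Cauchy--Schwarz in $y$ to split $|\mathcal F[C_t](x-y)|=|\mathcal F[C_t](x-y)|^{1/2}\cdot|\mathcal F[C_t](x-y)|^{1/2}$, pairing one half with $|\mathcal F[\mathbf 1_{tD}](y-z)|$ and the other with $|\mathcal F[\mathbf 1_{tD}](y-z')|$; this produces $\big(\int|\mathcal F[C_t](x-y)|\,|\mathcal F[\mathbf 1_{tD}](y-z)|^2dy\big)^{1/2}$ type factors. Then, after the $y$-integral, I would estimate the remaining $z,z'$ integrals against $\|\mathcal F[\mathbf 1_{tD}]\|_{L^2}^2={\rm const}\,t^d$, $\|\mathcal F[C_t]\|_{L^1}={\rm const}\,w_{2,t}$ (or a power of it), and $\|\mathcal F[\mathbf 1_{tD}]\|_{L^1}={\rm const}\,t^d$, collecting powers to reach $w_{2,t}^{3/2}t^{2d}$. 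Finally, I would integrate over $G(dx)$: the $x$-dependence that survives all previous estimates should be of the form $\min(1,(t|x|)^{-d/2})$ or similar, uniformly integrable against the finite measure $G$ precisely because of \eqref{spectralcondR=2} (equivalently \eqref{spectralcond2}); the $t^{-d/2}$ pieces should not accumulate, yielding a constant independent of $t$ and the stated bound.

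\textbf{Main obstacle.} The delicate point is the bookkeeping of the powers of $w_{2,t}$ and $t^d$ so that exactly $w_{2,t}^{3/2}t^{2d}$ comes out, not more. The exponent $3/2$ on $w_{2,t}$ (rather than the naive $2$) is the crux: it must come from the fact that one of the four factors $\mathcal F[\mathbf 1_{tD}]$ ``absorbs'' part of a $C_t$-factor through the off-diagonal decay, so that one effectively replaces an $L^1$-norm of $\mathcal F[C_t]$ by an $L^2$-type quantity (which scales like $w_{2,t}^{1/2}$) against one extra copy of $\mathcal F[\mathbf 1_{tD}]$. Getting this saving requires carefully choosing where to apply Cauchy--Schwarz and exploiting the assumed $|x|^{-d/2}$ decay of $\mathcal F[\mathbf 1_D]$ in full strength — this is exactly the place where the hypothesis on $\partial D$ (smoothness and non-vanishing curvature) enters. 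A secondary technical issue is justifying all the Fubini exchanges and that $\mathcal F[C_t]$ — the Fourier transform of a truncated, merely continuous covariance — is a genuine integrable function (or at worst a finite measure that we can still integrate $|\mathcal F[\mathbf 1_{tD}]|^2$ against); here one uses that $C_t=C\cdot\mathbf 1_{\{|w|\le {\rm diam}(D)t\}}$ and $C$ has finite spectral measure $G$, so $\mathcal F[C_t]=G\ast\mathcal F[\mathbf 1_{\{|w|\le {\rm diam}(D)t\}}]$ up to constants, which is continuous and decays like $|\xi|^{-(d+1)/2}$ at infinity, hence integrable.
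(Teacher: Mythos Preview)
Your proposal has a fundamental gap: you are invoking the hypothesis $|\mathcal{F}[\mathbf{1}_D](x)|=O(|x|^{-d/2})$, but this is \emph{not} a hypothesis of Lemma~\ref{boundI}. That decay assumption appears only in Lemma~\ref{boundII}. The only special hypothesis in Lemma~\ref{boundI} (besides the spectral condition~\eqref{spectralcondR=2}) is \emph{isotropy} of $B$ --- and you never use it. Your entire scheme for extracting the saving is built on the $\mathcal{F}[\mathbf{1}_{tD}]$ decay, so as a proof of this particular lemma it does not get off the ground.

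The paper's argument is essentially orthogonal to yours. The constraint $|x-y|\ge|x|/2$ is used directly on the factor $\mathcal{F}[C_t](x-y)$: by isotropy one writes $\mathcal{F}[C_t]$ in polar coordinates, and the Bessel bound~\eqref{boundBessel} gives the pointwise estimate $|\mathcal{F}[C_t](\xi)|\le {\rm const}\,|\xi|^{-1/2}t^{(d-1)/2}w_{2,t}^{1/2}$. On the region $|x-y|\ge|x|/2$ this produces a factor $|x|^{-1/2}$, which is then absorbed into $G(dx)$ to form a modified covariance $\bar C(u)=\int e^{i\langle x,u\rangle}|x|^{-1/2}G(dx)$. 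One then \emph{undoes} the Fourier picture in $y$ (Plancherel plus Cauchy--Schwarz bounds $\int \mathcal{F}[\mathbf{1}_{D_t(-w)}](y)\mathcal{F}[\mathbf{1}_{-D_t(u)}](y)\,dy$ by ${\rm const}\,t^d$), returns to spatial variables $u,w$, and applies Lemma~\ref{lm3.1} to $\bar C$ using the spectral condition to get $|\bar C(z)|\le{\rm const}\,|z|^{-(d-1)/2}$. The exponent $3/2$ on $w_{2,t}$ thus comes from $w_{2,t}^{1/2}$ (the pointwise $\mathcal{F}[C_t]$ bound) times $w_{2,t}$ (from $\int C^2$), not from any interaction with $\mathcal{F}[\mathbf 1_{tD}]$.

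A secondary problem: your claim $\|\mathcal F[C_t]\|_{L^1}={\rm const}\cdot w_{2,t}$ is incorrect. Since $\mathcal F[C_t]=G\ast\mathcal F[\mathbf 1_{\{|w|\le{\rm diam}(D)t\}}]$ (up to constants) and $\|\mathcal F[\mathbf 1_{\{|w|\le T\}}]\|_{L^1}$ is scale-invariant, one has $\|\mathcal F[C_t]\|_{L^1}=O(1)$, not $O(w_{2,t})$. This would further derail your power-counting even if the curvature hypothesis were available.
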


\noindent
{\it Proof}. 
First, call
\begin{equation*}
	I:=\int_{\mathbb{R}^d}G(dx)
	\int_{|x-y|\ge|x|/2}dy|\mathcal{F}[C_t](x-y)|\left|\int_{\R^d} C_t(w) e^{i\langle x, w\rangle} \mathcal{F}[\mathbf{1}_{D_t(-w)}](y) dw\right|^2.
\end{equation*}
Using polar coordinates, we can write 
$$
\mathcal{F}[C_t](x)=\int_{\{|u|\le {\rm diam}(D)t\}}C(u)e^{i\langle x,u \rangle}du=t^d\int_0^{{\rm diam}(D)}dr\rho(rt)r^{d-1}\int_{S^{d-1}} e^{irt\langle x,\theta \rangle}d\theta.
$$
Thanks to (\ref{bd})-(\ref{bd2}) and (\ref{boundBessel}) we have for $|x|>1$
\[
\left|\int_{S^{d-1}} e^{i\langle x,\theta \rangle}d\theta\right| ={\rm const} \,|x|^{1-\frac{d}2}\left|J_{\frac{d}{2}-1}(|x|)\right| \leq  {\rm const}\,|x|^{1-\frac{d}2}|x|^{-\frac12} \leq \frac{{\rm const}}{\sqrt{|x|}}.
\]
We deduce
\begin{eqnarray}
	\nonumber&&|\mathcal{F}[C_t](x)|\le \frac{{\rm const}}{\sqrt{|x|}}t^d\int_0^{{\rm diam}(D)}|\rho(rt)|\frac{r^{d-1}}{\sqrt{rt}}dr=\frac{{\rm const}}{\sqrt{|x|}} \int_0^{{\rm diam}(D)t}|\rho(r)|r^{\frac{d-1}{2}+\frac{d}{2}-1}dr \\
	&&\nonumber\le \frac{{\rm const}}{\sqrt{|x|}}  t^{\frac{d}{2}-1}\int_0^{{\rm diam}(D)t}|\rho(r)|r^{\frac{d-1}{2}}dr\le \frac{{\rm const}}{\sqrt{|x|}}  t^{\frac{d-1}{2}}\sqrt{\int_0^{{\rm diam}(D)t}\rho^2(r)r^{d-1}dr}\\
	&&\nonumber=\frac{{\rm const}}{\sqrt{|x|}}t^{\frac{d-1}{2}} w^{1/2}_{R,{\rm diam}(D)t}\leq\frac{{\rm const}}{\sqrt{|x|}}t^{\frac{d-1}{2}} w^{1/2}_{R,t}
\end{eqnarray}
where the last inequality follows from the doubling conditions in \cite{Gorbachev}.
With this estimate, we have that $I$ satisfies:
\begin{eqnarray*}
	\nonumber I&\leq &\int_{\mathbb{R}^d}G(dx)
	\int_{|y-x|\ge|x|/2}dy\left|\mathcal{F}[C_t](x-y)\right|\\
	&&\times \left|\int_{\R^d}dw C_t(w) e^{i\langle x, w\rangle} \mathcal{F}[\mathbf{1}_{D_t(-w)}](y) \right|^2 \notag\\
	\nonumber&\leq&{\rm const}\cdot t^{\frac{d-1}{2}}w^{1/2}_{R,t}\int_{\mathbb{R}^d}\frac{G(dx)}{\sqrt{|x|}}	\int_{\mathbb{R}^d}dy\left|\int_{\R^d}dw C_t(w) e^{i\langle x, w\rangle} \mathcal{F}[\mathbf{1}_{D_t(-w)}](y) \right|^2 \notag\\
	&=&\nonumber {\rm const}\cdot t^{\frac{d-1}{2}}w^{1/2}_{R,t} \nonumber\int_{\R^{2d}}du dw C_t(u)C_t(w) \notag \\
	&&\times \underbrace{\left(\int_{\mathbb{R}^d}\frac{G(dx)}{\sqrt{|x|}}e^{i\langle x, u+w\rangle}\right)}_{:=\bar{C}(u+w)}	\int_{\mathbb{R}^d}dy\mathcal{F}[\mathbf{1}_{D_t(-w)}](y)\mathcal{F}[\mathbf{1}_{-D_t(u)}](y)\notag\\
	&\leq&\nonumber  {\rm const}\cdot t^{\frac{d-1}{2}}w^{1/2}_{R,t}\int_{\R^{2d}}du dw |C_t(u)C_t(w) \bar{C}(u+w)|
	\\ 
	\nonumber &&\times 	\sqrt{{\rm Vol}(tD\cap (tD+w))}\sqrt{{\rm Vol}(tD\cap (tD+u))}\\
	&\leq&\nonumber  {\rm const}\cdot t^{\frac{3}2d-\frac{1}{2}}w^{1/2}_{R,t}\int_{\R^{2d}}du dw |C_t(u)C_t(w) \bar{C}(u+w)|,
\end{eqnarray*} 
where in the third inequality we used Cauchy-Schwarz inequality and Plancherel theorem. 

Recall now that $\int_{\mathbb{R}^d}|x|^{-\frac{d}{2}}G(dx)=\int_0^\infty s^{-\frac{d}2}\mu(ds)<\infty$ by assumption, see (\ref{spectralcondR=2}). 
By definition, the spectral measure of $\bar{C}$ is $\frac{G(d\lambda)}{\sqrt{|\lambda|}}$, see (\ref{fourier}), and the associated isotropic spectral measure
is $\bar{\mu}(ds)=\frac{\mu(ds)}{\sqrt{s}}$. Since $\int_0^\infty s^{-\frac{d}2}\mu(ds)=\int_0^\infty s^{-\frac{d-1}{2}}\bar{\mu}(ds)<\infty$, we deduce from Lemma \ref{lm3.1} that
$\sup_{r\in\R_+}r^{\frac{d-1}{2}}|\bar{\rho}(r)| = \sup_{u\in\R^d}|u|^{\frac{d-1}{2}}|\bar{C}(u)|<\infty$, that is,
$\bar{C}(u)\le {\rm const}\,|u|^{-\frac{d-1}{2}}$. Using the inequality $|ab|\le a^2+b^2$, this yields
\begin{eqnarray*}
	I&\leq&\nonumber {\rm const} \cdot t^{\frac{3}{2}d-\frac{1}{2}}w^{1/2}_{R,t}\int_{\{|u|,|w|\le {\rm diam}(D)t\}}du dw C^2(u)|\bar{C}(u+w)| \\
	&\leq&\nonumber  {\rm const} \cdot t^{\frac{3}{2}d-\frac{1}{2}}w^{1/2}_{R,t}w_{R,{\rm diam}(D)t}\int_{\{|z|\le 2diam(D) t\}}  |\bar{C}(z)|	dz\\
	&\leq& \nonumber {\rm const} \cdot t^{\frac{3}{2}d-\frac{1}{2}}w^{3/2}_{R,t}\int_0^{2diam(D) t}r^{\frac{d-1}{2}} dr=  {\rm const} \cdot t^{2d}w^{3/2}_{R,t}.
\end{eqnarray*}
where in the last inequality we used again doubling conditions in \cite{Gorbachev}.
\qed \\

\begin{lemma}\label{boundII}
	Let $B=(B_x)_{x\in\R^d}$ be a real-valued continuous centered Gaussian field  on $\R^d$, and
	assume that $B$ is stationary and has unit-variance (note that we did not assume isotropy and $d\ge2$).
	Assume that $D\subset \R^d$ is compact with ${\rm Vol}(D)>0$. Recall the definition (\ref{C}) of $C$, the definition (\ref{fourier}) of $G$, the definition (\ref{var chaos bis}) of $w_{2,t}$, the definition (\ref{Ct}) of $C_t$ and the definition (\ref{tDu}) of $D_t(u)$.
	If $|\mathcal{F}[\mathbf{1}_{D}](x)|=O\left(\frac{1}{|x|^{d/2}}\right)$ as $|x|\rightarrow\infty$\footnote{Since we assumed $D$ to be compact, this happens for example when $D=\overline{\mathring{D}}$ and $\partial D$ is smooth with non-vanishing Gaussian curvature, see e.g. \cite{Brandolini}.}, with $\mathcal{F}$ the Fourier transform, 
	and if the analogous of the spectral condition (\ref{spectralcondR=2}) holds (note that here the isotropic spectral measure $\mu$ is not defined, because $B$ is not necessarily isotropic, but $G$ is defined)
	\begin{equation*}
		\int_{\R^d} |x|^{-\frac{d}{2}}G(dx)<\infty,
	\end{equation*}
	then  
	\begin{eqnarray*}
		&&\int_{\mathbb{R}^d}G(dx)
		\int_{|y|\ge|x|/2}dy|\mathcal{F}[C_t](x-y)|\left|\int_{\R^d} C_t(w) e^{i\langle x, w\rangle} \mathcal{F}[\mathbf{1}_{D_t(-w)}](y) dw\right|^2\\
		&&\le {\rm const}\, w_{2,t}^{3/2}\,  t^{2d}.
	\end{eqnarray*}
\end{lemma}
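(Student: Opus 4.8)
The plan is to bound the region $\{|y|\ge |x|/2\}$ by exploiting the decay of $\mathcal{F}[\mathbf{1}_{tD}]$ at infinity, complementing Lemma~\ref{boundI} which handled $\{|x-y|\ge |x|/2\}$; together these two regions cover all of $\R^d$ since one of $|x-y|$, $|y|$ is always at least $|x|/2$. First I would use the representation (\ref{31}) from Lemma~\ref{suitexp} to write the inner $w$-integral as $\int_{\R^d}\mathcal{F}[C_t](x-z)\mathcal{F}[\mathbf{1}_{tD}](y-z)\mathcal{F}[\mathbf{1}_{tD}](z)\,dz$. Using the scaling $\mathcal{F}[\mathbf{1}_{tD}](\xi)=t^d\mathcal{F}[\mathbf{1}_{D}](t\xi)$ together with the hypothesis $|\mathcal{F}[\mathbf{1}_D](\xi)|=O(|\xi|^{-d/2})$ and the uniform bound $|\mathcal{F}[\mathbf{1}_{tD}](\xi)|\le t^d{\rm Vol}(D)$, I expect to extract a pointwise decay factor in $|y|$ (roughly $|y|^{-d/2}$, after using $|y|\ge|x|/2$ to also gain a factor of $|x|^{-d/2}$ which can be integrated against $G$ thanks to the spectral assumption $\int|x|^{-d/2}G(dx)<\infty$). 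The bound $\sup_u|u|^{\frac{d-1}{2}}|\mathcal{F}[C_t](u)|\le {\rm const}\, t^{(d-1)/2}w_{2,t}^{1/2}$ established inside the proof of Lemma~\ref{boundI} (which did not use isotropy beyond producing that estimate, and here is granted as a hypothesis-free consequence of the Bessel bounds — but note the present lemma drops isotropy, so I would instead bound $|\mathcal{F}[C_t]|$ more crudely, e.g.\ $\|\mathcal{F}[C_t]\|_\infty\le \int_{\{|u|\le {\rm diam}(D)t\}}|C(u)|\,du$ and by Cauchy–Schwarz $\le {\rm const}\,t^{d/2}w_{2,t}^{1/2}$) will control the remaining convolution.

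The key steps in order: (1) Substitute (\ref{31}) and expand the modulus squared, so that $I'$ (the quantity to be bounded) becomes an integral over $x$ (against $G$), $y$, $z$, $z'$ of $|\mathcal{F}[C_t](x-y)|\,|\mathcal{F}[C_t](x-z)|\,|\mathcal{F}[C_t](x-z')|\,|\mathcal{F}[\mathbf{1}_{tD}](y-z)|\,|\mathcal{F}[\mathbf{1}_{tD}](z)|\,|\mathcal{F}[\mathbf{1}_{tD}](y-z')|\,|\mathcal{F}[\mathbf{1}_{tD}](z')|$. (2) On $\{|y|\ge|x|/2\}$, use the decay hypothesis on $\mathcal{F}[\mathbf{1}_{tD}]$ to bound one of the factors $|\mathcal{F}[\mathbf{1}_{tD}](z)|$ or $|\mathcal{F}[\mathbf{1}_{tD}](z')|$ by ${\rm const}\,t^d\,(t|z|)^{-d/2}$ (or symmetrically in $z'$), splitting the $z$-integral into the region where $|z|$ is comparable to $|y|$ — there we gain $|y|^{-d/2}\ge ({\rm const}\,|x|)^{-d/2}$ usable against $G$ — versus where $|z|$ is small, in which case $|y-z|$ is comparable to $|y|$ and we use decay of $\mathcal{F}[\mathbf{1}_{tD}](y-z)$ instead. (3) Integrate out the remaining $y,z,z'$ variables using Plancherel/Cauchy–Schwarz (as in Lemma~\ref{boundI}, replacing products $|\mathcal{F}[\mathbf{1}_{tD}]|^2$ by volumes $\le {\rm Vol}(tD)\le {\rm const}\,t^d$) and the $L^1$ bounds on $\mathcal{F}[C_t]$, being careful to track every power of $t$ and every factor of $w_{2,t}$. (4) Collect powers: one expects the $\mathcal{F}[C_t]$ factors to contribute $w_{2,t}^{1/2}\times (w_{2,t}^{1/2})^{?}$ with the indicator-Fourier-transform factors contributing powers of $t^d$ and one saved $t^{-d/2}$, landing on the claimed $t^{2d}w_{2,t}^{3/2}$.

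The main obstacle I anticipate is bookkeeping the exponents correctly so the final bound is exactly $w_{2,t}^{3/2}t^{2d}$ and no worse: there are three $\mathcal{F}[C_t]$ factors and four $\mathcal{F}[\mathbf{1}_{tD}]$ factors in the squared expression, and one must be surgical about which factors are estimated pointwise (to produce decay in $|x|$ for integrability against $G$) versus estimated in $L^2$ (to be integrated out by Plancherel). A secondary subtlety is that, unlike Lemma~\ref{boundI}, isotropy is not assumed here, so I cannot invoke the Bessel-function bound on $|\mathcal{F}[C_t]|$ that gave the sharp $t^{(d-1)/2}$ scaling; instead the decay of $\mathcal{F}[\mathbf{1}_D]$ must do the heavy lifting of producing the integrable weight $|x|^{-d/2}$ against $G$, and I would need to make sure that only \emph{one} such weight is needed (the spectral hypothesis is a single power $|x|^{-d/2}$, not higher) — this is exactly why the cutoff $|y|\ge|x|/2$ is the right geometric split, since it lets me trade the hard-to-control $|x|$-weight for an easy-to-produce $|y|$-weight coming from a single Fourier-transform-of-indicator factor.
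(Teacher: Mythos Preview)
Your high-level strategy matches the paper's: invoke (\ref{31}), then use the decay hypothesis on $\mathcal{F}[\mathbf{1}_D]$ together with the geometric split ($|z|$ versus $|y-z|$ comparable to $|y|\ge|x|/2$) to manufacture a factor $|x|^{-d/2}$, which is then integrated against $G$. However, the execution you sketch differs from the paper's in a way that leaves a real gap.

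The paper does \emph{not} expand the modulus squared and work term by term. Instead it applies Cauchy--Schwarz in $y$ to pull out one factor of $\|\mathcal{F}[C_t]\|_2$, then Cauchy--Schwarz in $z$ (inside the resulting fourth power) to pull out $\|\mathcal{F}[C_t]\|_2^2$, arriving at
\[
II \le {\rm const}\,\|\mathcal{F}[C_t]\|_2^{3} \int_{\R^d} G(dx)\left(\int_{|y|\ge |x|/2}\Big(\big|\mathcal{F}[\mathbf{1}_{tD}]\big|^2*\big|\mathcal{F}[\mathbf{1}_{tD}]\big|^2(y)\Big)^{2}\,dy\right)^{1/2}.
\]
The inner $y$-integral is then split as $\big(\sup_{|y|\ge|x|/2}\cdots\big)^{1/2}\big(\int\cdots\big)^{1/2}$; Young's inequality and Plancherel give $\int\le{\rm const}\,t^{2d}$, while your region split bounds the sup by ${\rm const}\,t^{2d}|x|^{-d}$, so its \emph{square root} contributes exactly $t^{d}|x|^{-d/2}$ --- a single power, as required. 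Plancherel and the doubling condition give $\|\mathcal{F}[C_t]\|_2^{3}={\rm const}\,w_{2,t}^{3/2}$, and the exponents assemble directly.

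Your route --- expand the square and extract decay from ``one of $z$ or $z'$'' --- runs straight into the danger you yourself flag: the $z$- and $z'$-integrals are symmetric, and extracting decay from each yields $|x|^{-d}$, which is not integrable under the stated hypothesis. You say you would use only one side, but you do not specify how to control the other copy, and the tools you name are both wrong: the $L^\infty$ bound $\|\mathcal{F}[C_t]\|_\infty\le{\rm const}\,t^{d/2}w_{2,t}^{1/2}$ wastes a factor $t^{d/2}$ you cannot afford, and $\mathcal{F}[C_t]$ is not known to lie in $L^1(\R^d)$ (the truncation $C_t$ has a jump on the sphere $\{|u|={\rm diam}(D)t\}$), so ``$L^1$ bounds on $\mathcal{F}[C_t]$'' are unavailable. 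The correct estimate throughout is $\|\mathcal{F}[C_t]\|_2$, and the paper's two Cauchy--Schwarz steps are precisely the device that funnels all three $\mathcal{F}[C_t]$ factors into $\|\mathcal{F}[C_t]\|_2^{3}$ and leaves the autoconvolution $|\mathcal{F}[\mathbf{1}_{tD}]|^2*|\mathcal{F}[\mathbf{1}_{tD}]|^2$, whose square-root-of-sup produces exactly one $|x|^{-d/2}$.
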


\noindent
{\it Proof}. First, by (\ref{31}) in Lemma \ref{suitexp}, we can write 
\begin{eqnarray*}
	II:&=&\int_{\mathbb{R}^d}G(dx)
	\int_{|y|\ge|x|/2}dy|\mathcal{F}[C_t](x-y)|\left|\int_{\R^d} C_t(w) e^{i\langle x, w\rangle} \mathcal{F}[\mathbf{1}_{D_t(-w)}](y) dw\right|^2\\
	&=&\int_{\mathbb{R}^d}G(dx)
	\\
	&&\int_{|y|\ge|x|/2}dy|\mathcal{F}[C_t](x-y)|\left|\int_{\mathbb{R}^d} \mathcal{F}[C_t](x-z) \mathcal{F}[\mathbf{1}_{tD}](y-z)\mathcal{F}[\mathbf{1}_{tD}](z)dz\right|^2
\end{eqnarray*}
Using Cauchy-Schwarz inequality two times (first with respect to $y$, then with respect to $z$) and then Plancherel theorem, 
$II$ satisfies, with $\norm{\cdot}_2$ the $L^2$-norm:
\begin{eqnarray*}
	II&\leq&\norm{\mathcal{F}[C_t]}_{2}\int_{\mathbb{R}^d}G(dx)\\
	&&\times \left(\int_{|y|\ge|x|/2}dy \left|\int_{\mathbb{R}^d}dz \mathcal{F}[C_t](x-z) \mathcal{F}[\mathbf{1}_{tD}](y-z)\mathcal{F}[\mathbf{1}_{tD}](z) 
	\right|^4\right)^{1/2} \\
\end{eqnarray*} 
\begin{eqnarray*}
	&\leq&\norm{\mathcal{F}[C_t]}_{2}\int_{\mathbb{R}^d}G(dx)\\
	&&\times \left(\int_{|y|\ge|x|/2}dy \norm{\mathcal{F}[C_t]}^4_{2} \left|\int_{\mathbb{R}^d}dz \left|\mathcal{F}[\mathbf{1}_{tD}]\right|^2(y-z)\left|
	\mathcal{F}[\mathbf{1}_{tD}]\right|^2(z) \right|^2\right)^{1/2}\\
	&=&\norm{\mathcal{F}[C_t]}^3_{2}\int_{\mathbb{R}^d}G(dx)\left(\int_{|y|\ge|x|/2}dy\left| \left|\mathcal{F}[\mathbf{1}_{tD}]\right|^2\ast\left|\mathcal{F}[\mathbf{1}_{tD}]\right|^2(y)\right|^2 \right)^{1/2}\\
	&\leq&\norm{\mathcal{F}[C_t]}^3_{2}\int_{\mathbb{R}^d}G(dx)\left(\sup_{|y|\ge|x|/2}\left|\mathcal{F}[\mathbf{1}_{tD}]\right|^2\ast\left|\mathcal{F}[\mathbf{1}_{tD}]
	\right|^2(y)\right)^{1/2}\\
	&&\times \left(\int_{\mathbb{R}^d}dy\left| \left|\mathcal{F}[\mathbf{1}_{tD}]\right|^2\ast\left|\mathcal{F}[\mathbf{1}_{tD}]\right|^2(y)\right| \right)^{1/2} \\
	&\leq& \norm{\mathcal{F}[C_t]}^3_{2}\int_{\mathbb{R}^d}G(dx)\left(\sup_{|y|\ge|x|/2}\left|\mathcal{F}[\mathbf{1}_{tD}]\right|^2\ast\left|\mathcal{F}[\mathbf{1}_{tD}]
	\right|^2(y)\right)^{1/2}\\
	&&\times \left(\int_{\mathbb{R}^d}\left| \mathcal{F}[\mathbf{1}_{tD}](y)\right|^2dy\right)\\
	&=&  {\rm const} \cdot t^{d}\norm{\mathcal{F}[C_t]}^3_{2}\int_{\mathbb{R}^d}G(dx)\left(\sup_{|y|\ge|x|/2}\left|\mathcal{F}[\mathbf{1}_{tD}]\right|^2\ast\left|\mathcal{F}[\mathbf{1}_{tD}]\right|^2(y)\right)^{1/2},
\end{eqnarray*}
where the last inequality comes from Young convolution inequalities and the last equality from Plancherel theorem.

Since by assumption $|\mathcal{F}[\mathbf{1}_{D}](x)|=O\left(\frac{1}{|x|^{d/2}}\right)$ as $|x|\rightarrow\infty$, in particular $\sup_{x\in\R^d}|x|^{d/2}|\mathcal{F}[\mathbf{1}_{D}](x)|<\infty$.
We deduce, for all $t>0$ and all $y\in\R^d$, that
$$
|\mathcal{F}[\mathbf{1}_{tD}](y)|^2 =t^{2d}|\mathcal{F}[\mathbf{1}_{D}](ty)|^2 \leq {\rm const}\, t^d|y|^{-d}.
$$
This implies  
\begin{eqnarray}
	&& \nonumber \sup_{|y|\ge|x|/2}\left|\mathcal{F}[\mathbf{1}_{tD}]\right|^2\ast\left|\mathcal{F}[\mathbf{1}_{tD}]\right|^2(y)=\sup_{|y|\ge|x|/2}\left|\int_{\mathbb{R}^d}\left|\mathcal{F}[\mathbf{1}_{tD}]\right|^2(y-z)\left|\mathcal{F}[\mathbf{1}_{tD}]\right|^2(z)dz\right| \\
	&\leq& \nonumber {\rm const}\cdot \frac{t^{d}}{|x|^{d}}\sup_{|y|\ge|x|/2}\left|\int_{|z|\ge|x|/4}\left|\mathcal{F}[\mathbf{1}_{tD}]\right|^2(y-z)dz+\int_{|y-z|\ge|x|/4}\left|\mathcal{F}[\mathbf{1}_{tD}]\right|^2(z)dz\right|\\
	&\leq& \nonumber {\rm const}\cdot \frac{t^{d}}{|x|^{d}}  \int_{\mathbb{R}^d}\left| \mathcal{F}[\mathbf{1}_{tD}]\right|^2(z)dz={\rm const} \frac{t^{2d}}{|x|^{d}}.
\end{eqnarray}
We deduce that
\begin{eqnarray*}
	\nonumber II&\leq &{\rm const} \cdot t^{2d}\norm{\mathcal{F}[C_t]}^3_{2}\int_{\mathbb{R}^d}\frac{G(dx)}{|x|^{d/2}}\\
	&=&{\rm const} \cdot t^{2d}w^{3/2}_{R,{\rm diam}(D)t}\int_{\mathbb{R}^d}\frac{G(dx)}{|x|^{d/2}}\le {\rm const} \cdot t^{2d}w^{3/2}_{R,t}\int_{\mathbb{R}^d}\frac{G(dx)}{|x|^{d/2}},
\end{eqnarray*}
where the last equality comes from Plancherel theorem and the last inequality from doubling conditions in \cite{Gorbachev}. 

Since $\int_{\mathbb{R}^d}|x|^{-\frac{d}2}G(dx)<\infty$ by assumption,
our bound for $II$ is
\[
II\leq {\rm const} \cdot t^{2d}w^{3/2}_{R,t}.
\]
and the proof is concluded.
\qed \\

Now we can proceed with the proof of Proposition \ref{prop:step2}.\\

\noindent
{\it Proof of Proposition \ref{prop:step2}}. 
First, we assume without loss of generality that $w_{2,t}\rightarrow\infty$, since otherwise the statement follows from theorem \ref{BM}.
Moreover, throughout all the proof we freely use that $v_{2,t}\asymp w_{2,t}$, see (\ref{**}).

Starting from now, the proof is divided into three steps.\\

{\it Step 1: Reduction of the proof}. 
We claim that
it is enough to check that $Y_{2,t}/\sqrt{{\rm Var}(Y_{2,t})}\rightarrow N(0,1)$
in order to prove Proposition \ref{prop:step2}.
Indeed, since (\ref{spectralcondR=2}) holds and given that $\frac{d}{2}>\frac{d-1}{2}$, we deduce from Lemma \ref{lm3.1} that
$\sup_{r\in\R_+}\{r^{\frac{d-1}{2}}\rho(r)\}<\infty$. Proposition \ref{Prop5.4} implies the statement on $\sigma^2_t$  and justifies that we are left to prove that $Y_{2,t}/\sqrt{{\rm Var}(Y_{2,t})}\rightarrow N(0,1)$.

As done in the proof of Proposition \ref{prop:step1}, in order to prove the convergence $Y_{2,t}/\sqrt{{\rm Var}(Y_{2,t})}\rightarrow N(0,1)$ we make use of the Fourth Moment Theorem (\cite[Theorem 5.2.7]{bluebook});
this requires checking that the only involved contraction goes to zero. 
To do this, we will use the novel ideas from Fourier analysis introduced in Lemma \ref{suitexp}, Lemma \ref{boundI} and Lemma \ref{boundII}.

As in the proof of Theorem \ref{fourth moment theorem}, we can first 
rewrite $Y_{2,t}$ as a double Wiener-It\^o integral with respect to $B$:
$$Y_{2,t}=I_2(f_{t}),\quad
\mbox{where }
f_{t}=\int_{tD}e_x^{\otimes 2}dx,$$
with $e_x$ such that $B_x=I_1(e_x)$.
We know from the  Fourth Moment Theorem stated in section \ref{fmt} 
that $Y_{2,t}/\sqrt{{\rm Var}(Y_{2,t})}\to N(0,1)$ if and only if
$
\norm{f_{t}\otimes_1 f_{t}}_{2}/{\rm Var}(Y_{2,t})\rightarrow 0
$. 

\bigskip

{\it Step 2: A two-term error bound for the norm of the contraction}. Here we apply Lemma \ref{suitexp}.
Noting that for $x,y\in\R^d$ either $|x-y|\ge |x|/2$ or $|y|\ge |x|/2$, we deduce from (\ref{30}) and (\ref{31}) the following two terms error bound:
\begin{align*}
	&\norm{f_{t}\otimes_1 f_{t}}_{2}^2\notag\\
	\leq&\int_{\mathbb{R}^d}G(dx)
	\int_{|x-y|\ge|x|/2}dy\left|\mathcal{F}[C_t](x-y)\right| \left|\int_{\R^d} C_t(w) e^{i\langle x, w\rangle} \mathcal{F}[\mathbf{1}_{D_t(-w)}](y) dw\right|^2\notag\\
	&+\int_{\mathbb{R}^d}G(dx)
	\int_{|y|\ge|x|/2}dy\left|\mathcal{F}[C_t](x-y)\right| \left|\int_{\R^d} C_t(w) e^{i\langle x, w\rangle} \mathcal{F}[\mathbf{1}_{D_t(-w)}](y) dw\right|^2\notag\\
	=&I+II.\notag\\
	\label{32}
\end{align*}

{\it Step 3: The norm of the contraction divided by the variance goes to $0$}.
To conclude the proof of Proposition \ref{prop:step2}, we will now check (see the conclusion of Step 1) that 
$\norm{f_{t}\otimes_1 f_{t}}_{2}/{\rm Var}(Y_{2,t})\rightarrow 0$.
From (\ref{**}), we have
\begin{equation}\label{af}
	{\rm Var}(Y_{2,t})\asymp t^dw_{R,t}.
\end{equation}
We deduce from Step 2, Lemma \ref{boundI} and Lemma \ref{boundII}, that
$$
\frac{\norm{f_{t}\otimes_1 f_{t}}_{2}^2}{{\rm Var}(Y_{2,t})^2}
\leq {\rm const} \cdot \frac{w^{3/2}_{R,t}}{w^{2}_{R,t}}= {\rm const}\cdot \frac{1}{w^{1/2}_{R,t}},
$$
and the right-hand size goes to $0$, since we assumed at the beginning of the proof that $w_{2,t}\rightarrow\infty$.
\qed

\section{Proof of theorem \ref{thm:main} when $R=1$}
\label{rank1section}

This section is devoted to the proof of Theorem \ref{thm:main} in the remaining cases, namely $\varphi$ non-odd, $R=1$ and $R'\neq 3$.

To ease the exposition, we write in the following proposition the statement obtained when, in 
Theorem \ref{thm:main}, we additionally suppose that we are in the cases just mentioned above. 

\begin{proposition}\label{prop:step3}
	Fix $d\geq 2$, let $B=(B_x)_{x\in\R^d}$ be a real-valued continuous centered Gaussian field  on $\R^d$, and
	assume that $B$ is stationary, isotropic and has unit-variance.
	Let $\varphi:\R\to\R$ be not odd and such that $\E[\varphi(N)^2]<\infty$ with $N\sim N(0,1)$.
	Assume $\varphi$ has Hermite rank $R=1$ and let $R'\ge2$, $R'\neq3$, be its second Hermite rank.
	Consider $Y_t$ defined by (\ref{Yt}), where $D$ is compact and ${\rm Vol}(D)>0$. Set $m_t=\E[Y_t]$ and $\sigma_t=\sqrt{{\rm Var}(Y_t)}>0$, and 
	recall the definition  (\ref{mu}) of the isotropic spectral measure $\mu$,
	the definition (\ref{yqt}) of $Y_{q,t}$ and the definition (\ref{var chaos bis}) of $w_{q,t}$.
	Assume that the spectral condition holds
	\begin{equation}\label{spectralcond1}
		\int_0^\infty s^{-d}\mu(ds)<\infty .
	\end{equation}
	and that $|\mathcal{F}[\mathbf{1}_{D}](x)|=o\left(\frac{1}{|x|^{d/2}}\right)$ as $|x|\rightarrow\infty$ \footnote{Since we assumed $D$ to be compact, this happens for example when $D=\overline{\mathring{D}}$ and $\partial D$ is smooth with non-vanishing Gaussian curvature, see e.g. \cite{Brandolini}.}, with $\mathcal{F}$ the Fourier transform. Then \begin{equation*}
		\sigma_t^2 \asymp \left\{\begin{array}{lll}
			t^dw_{R',t}&&\mbox{if $R'\in\{2,4\}$}\\
			t^d&&\mbox{if $R'\ge5$}
		\end{array}
		\right.
	\end{equation*}  and 
	$$
	\frac{Y_t-m_t}{\sigma_t}\overset{\rm law}{\to} N(0,1)\quad \mbox{as $t\to\infty$}.
	$$
\end{proposition}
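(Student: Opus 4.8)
The plan is to reduce the $R=1$ case to a controlled combination of the $R=1$ chaos $Y_{1,t}$ (which is Gaussian) and the $R'$ chaos $Y_{R',t}$, and then to handle the latter exactly as in the even-rank sections. First I would compute variances of the chaotic components: since $C\not\in L^1(\R^d)$ in general, the variance of $Y_{1,t}$ is $t^d\int C(z)g_D(z/t)dz$, and the key first step is to show that, under the spectral condition $\int_0^\infty s^{-d}\mu(ds)<\infty$, this quantity is $O(t^d)$ — in fact, via the Fourier identity ${\rm Var}(Y_{1,t})=\int_{\R^d}\mathcal F[g_{tD}](\lambda)G(d\lambda)=\text{const}\int_{\R^d}t^d|t\lambda|^d|\mathcal F[\mathbf 1_D](t\lambda)|^2|\lambda|^{-d}G(d\lambda)$ as in (\ref{extrank1}), the hypothesis $|\mathcal F[\mathbf 1_D](x)|=o(|x|^{-d/2})$ gives $|t\lambda|^d|\mathcal F[\mathbf 1_D](t\lambda)|^2=o(1)$ pointwise and bounded by $\text{const}\,|t\lambda|^{-d}\cdot|t\lambda|^d=\text{const}$... more carefully, $|t\lambda|^{d}|\mathcal F[\mathbf 1_D](t\lambda)|^2 \le \text{const}$, so the integrand is dominated by $\text{const}\,t^d|\lambda|^{-d}G(d\lambda)$, which is integrable; dominated convergence then yields ${\rm Var}(Y_{1,t})=o(t^d)$. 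Thus the first-chaos term is \emph{negligible} at the scale $t^{d/2}$ when $R'\le 4$ (where $w_{R',t}\to\infty$ or $=\log t$), and is the \emph{only} contribution (of exact order $t^d$) when $R'\ge 5$.

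Next I would split according to $R'$. For $R'\ge 5$: here $C\in L^{R'}(\R^d)$ so the tail $\sum_{q\ge R'}a_q^2{\rm Var}(Y_{q,t})$ is handled by the Breuer–Major machinery (it converges to a constant times $t^d$), and combined with ${\rm Var}(Y_{1,t})=o(t^d)$ — wait, that is too small; rather, since $\varphi$ is not odd there is $a_{2k}\ne 0$ with $2k\ge R'$ or $2k=2$... actually $R'\ge5$ forces the even coefficient to sit at $q\ge 6$ or we could have $R'$ itself even. In any case $\sigma_t^2\asymp t^d$ comes from the short-memory part $q\ge R'$, the $q=1$ term being lower order, and one applies Breuer–Major (Theorem \ref{BM}) to $\sum_{q\ge R'}a_qY_{q,t}$ plus the observation that $t^{-d/2}Y_{1,t}\to 0$ in $L^2$, so the whole sum is asymptotically Gaussian. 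For $R'\in\{2,4\}$ (so $R'$ even), the dominant chaos is $Y_{R',t}$ with ${\rm Var}(Y_{R',t})=R'!\,t^d v_{R',t}\asymp t^d w_{R',t}$ by (\ref{**}); I would first check that the spectral condition $\int s^{-d}\mu(ds)<\infty$ implies $\int s^{-d/R'}\mu(ds)<\infty$ (since $d/R'\le d$ and $\mu$ is finite) and also, crucially, the finiteness of $\int|x|^{-d/2}G(dx)$ needed to invoke Lemma \ref{boundI} and Lemma \ref{boundII} when $R'=2$. Then the Gaussianity of $Y_{R',t}/\sqrt{{\rm Var}(Y_{R',t})}$ follows from Proposition \ref{prop:step1} (if $R'=4$) or Proposition \ref{prop:step2} (if $R'=2$), applied with Hermite rank $R'$ to the function $\varphi - \E[\varphi(N)] - a_1 H_1$.

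Then I would assemble the pieces via a decomposition analogous to the one in Proposition \ref{Prop5.4}: write $Y_t - m_t = a_1 Y_{1,t} + \sum_{q\ge R'} a_q Y_{q,t}$, show $\sigma_t^2 \asymp {\rm Var}\big(\sum_{q\ge R'}a_qY_{q,t}\big)\asymp t^d w_{R',t}$ (using that the $q=1$ term is $o(t^{d/2}\sqrt{w_{R',t}})$ when $R'\le 4$, and using a reduction-to-the-$R'$-th-chaos argument for the tail $q>R'$, which requires $C\in L^M$ for some $M>R'$ — this follows from $\int s^{-d/R'}\mu(ds)<\infty$ together with $R'$ even via Lemma \ref{lm3.1}, giving $|x|^{d/R'}|C(x)|$ bounded hence $C\in L^{R'+1}$). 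Finally, $L^2$-approximation of $(Y_t-m_t)/\sigma_t$ by ${\rm sgn}(a_{R'})Y_{R',t}/\sqrt{{\rm Var}(Y_{R',t})}$ transfers the CLT. The main obstacle I anticipate is the bookkeeping around the first-chaos term: one must be careful that ${\rm Var}(Y_{1,t})$ is genuinely smaller than ${\rm Var}(Y_{R',t})\asymp t^d w_{R',t}$ — this is precisely why the \emph{strict} little-$o$ hypothesis $|\mathcal F[\mathbf 1_D](x)|=o(|x|^{-d/2})$ is imposed for $R=1$ (as opposed to the big-$O$ for $R=2$): it forces ${\rm Var}(Y_{1,t})=o(t^d)$, which is $o(t^d w_{R',t})$ since $w_{R',t}\to\infty$ (for $R'=2$) or $w_{R',t}\asymp\log t\to\infty$ (for $R'=4$). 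Verifying that the excluded case $R'=3$ genuinely fails this comparison (there $w_{3,t}$ stays bounded while ${\rm Var}(Y_{1,t})$ need not be negligible) also explains the hypothesis $(R,R')\ne(1,3)$.
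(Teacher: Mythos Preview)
Your proposal is correct and follows essentially the same route as the paper: define $\widehat\varphi=\varphi-a_0-a_1H_1$ with Hermite rank $R'$, apply Proposition~\ref{prop:step1}, Proposition~\ref{prop:step2}, or Breuer--Major to $\widehat Y_t$ according to whether $R'=4$, $R'=2$, or $R'\ge5$, and then use the Fourier identity (\ref{extrank1}) together with the little-$o$ hypothesis and dominated convergence (via $\int s^{-d}\mu(ds)<\infty$) to get ${\rm Var}(Y_{1,t})=o(t^d)$, which makes the first-chaos contribution negligible in $L^2$. The only cosmetic difference is that the paper approximates $(Y_t-m_t)/\sigma_t$ by $\widehat Y_t/\widehat\sigma_t$ rather than directly by ${\rm sgn}(a_{R'})Y_{R',t}/\sqrt{{\rm Var}(Y_{R',t})}$, but since Propositions~\ref{prop:step1} and~\ref{prop:step2} already contain the reduction to the $R'$th chaos internally, either target works.
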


\begin{proof}
	Set $\widehat{\varphi}(x)=\varphi(x)-a_0-a_1x$. By the very definition of $R'$, the function $\widehat{\varphi}$ has Hermite rank $R'$. Let us define 
	\begin{equation}\label{Yt'}
		\widehat{Y}_t=\int_{tD}\widehat{\varphi}(B_x)dx=\sum_{q=R'}^{\infty}a_qY_{q,t}=Y_t-\E[Y_t]-a_1Y_{1,t}
	\end{equation}
	and its variance
	\begin{equation}\label{sigma'}
		{\widehat{\sigma}_t}^{2}={\rm Var}(\widehat{Y}_t)=\sum_{q=R'}^{\infty}a^2_q{\rm Var}(Y_{q,t})=\sigma_t^2-a^2_1{\rm Var}(Y_{1,t}).
	\end{equation}
	
	The proof is divided into two steps. 
	
	\bigskip
	
	{\it Step 1: CLT for $\widehat{Y}_t$}. We claim that $\widehat{\sigma}_t^2\asymp t^dw_{R',t}$ and 
	\begin{equation}
		\label{clt'}
		\frac{\widehat{Y}_t-m_t}{\widehat{\sigma}_t}\overset{\rm law}{\to} N(0,1)\quad \mbox{as $t\to\infty$}.
	\end{equation}
	First of all, observe that (\ref{spectralcond1}) implies  (\ref{spectralcond}) for $R'\geq 2$. Moreover, note that we can have three different situations:
	\begin{itemize}
		\item If $R'\ge5$, then by (\ref{spectralcond1}) and Lemma \ref{lm3.1} we have $\sup_{r\in\R_+}r^{\frac{d-1}{2}}|\rho(r)| = \sup_{u\in\R^d}|u|^{\frac{d-1}{2}}|C(u)|<\infty$, $C\in L^{R'}(\mathbb{R}^d)$, and the claim follows immediately by Theorem \ref{BM} and the fact that $\varphi$ is not odd.
		\item If $R'=4$, then the claim follows by Proposition \ref{prop:step1}.
		\item If $R'=2$, since $|\mathcal{F}[\mathbf{1}_{D}](x)|=o\left(\frac{1}{|x|^{d/2}}\right)$ as $|x|\rightarrow\infty$ implies $|\mathcal{F}[\mathbf{1}_{D}](x)|=O\left(\frac{1}{|x|^{d/2}}\right)$ as $|x|\rightarrow\infty$, then the claim follows by Proposition \ref{prop:step2}.
	\end{itemize}
	
	\bigskip
	
	{\it Step 2: CLT for $Y_t$}. We claim that $\sigma_t\sim\widehat{\sigma}_t$ as $t\to\infty$ and 
	\begin{equation}\label{red}
		\mathbb{E}\left[\left(\frac{Y_t-m_t}{\sigma_t}-\frac{\widehat{Y}_t}{\widehat{\sigma}_t}\right)^2\right]\rightarrow0.
	\end{equation}
	The proof of Proposition \ref{prop:step3} thus follows as soon as these two claims are shown to be true.
	From (\ref{extrank1}) we have 
	$$t^{-d}{\rm Var}(Y_{1,t})={\rm const}\int_{\mathbb{R}^d}\frac{G(d\lambda)}{|\lambda|^d}|t\lambda|^d|\mathcal{F}[\mathbf{1}_{D}](t\lambda)|^2. $$
	Then, combining (\ref{spectralcond1}), dominated convergence theorem and the fact that $|\mathcal{F}[\mathbf{1}_{D}](x)|=o\left(\frac{1}{|x|^{d/2}}\right)$ as $|x|\rightarrow\infty$, we have that ${\rm Var}(Y_{1,t})=o(t^d)$ as $t\rightarrow\infty$.
	Since in Step 1 we proved that $\widehat{\sigma}_t^2\asymp t^d$ or $\widehat{\sigma}_t^2\asymp t^dw_{R',t}$, we have by (\ref{sigma'}) that $\widehat{\sigma}_t\sim\sigma_t$. It remains to prove (\ref{red}). By orthogonality of chaotic projections we have
	\[
	\mathbb{E}\left[\left(\frac{Y_t-m_t}{\sigma_t}-\frac{\widehat{Y}_t}{\widehat{\sigma}_t}\right)^2\right]=a_1^2\frac{{\rm Var}(Y_{1,t})}{{\rm Var}(Y_t)}+\left(\frac{\widehat{\sigma}_t}{\sigma_t}-1\right)^2.
	\]
	Since ${\rm Var}(Y_{1,t})=o(t^d)$, the first addend converges to $0$. On the other hand, the second term vanishes because ${\widehat{\sigma}_t}\sim\sigma_t$ as $t\to\infty$.
	
	\bigskip

\end{proof}

\section{An example of application of Theorem \ref{thm:main}}\label{Examples-sec}
In this section,
we illustrate a possible use of Theorem \ref{thm:main}.
In order to introduce our class of fields of interest, recall the definition of the Bessel function $J_\nu$ given in Section \ref{besselstuff} and define, for every $\nu\ge0$, the normalized Bessel function function $\rho_\nu:[0,\infty)\rightarrow\R$ as
\[
\rho_\nu(r)=c_{\nu}\frac{J_{\nu}(r)}{r^{\nu}},
\]
where $c_\nu$ is chosen so that $\rho_\nu(0)=1$. 

Note that $\rho_{\frac d2 -1}$ is equal to the function $b_d$ defined in (\ref{bd2}), and in particular $\rho_0=b_2=J_0$ when $d=2$. 

Throughout this section, we define the \textbf{Bessel Gaussian field of order $\nu$ and dimension $d$} 
as the real-valued continuous centered Gaussian field $B^\nu=(B^\nu_x)_{x\in\R^d}$ with covariance function 
$$
\mathbb{E}[B^\nu_xB^\nu_y]=\rho_\nu(|x-y|).
$$
In particular, {\bf $\mathbf{d}$-dimensional Berry's Random Wave Model} is defined as the
Bessel Gaussian field of order $\frac d2 -1$ and dimension $d$.

The existence of the Bessel Gaussian field of order $\nu$ and dimension $d\geq 2$ is neither obvious, nor always true.
The following result provides a complete picture, 
and also shows that $d$-dimensional Berry's Random Wave Model appears to be the critical case for the existence of the Bessel Gaussian field. 

\begin{proposition}
	\label{thm:bessel}
	Fix $d\geq 2$ and $\nu\geq 0$. There exists a Bessel Gaussian field $B^\nu=(B^\nu_x)_{x\in\R^d}$ of order $\nu$ 
	and dimension $d$ if and only if $\nu\ge \frac d2 -1$. In this case, the isotropic spectral measure 
	associated to $B^\nu$ (see (\ref{rho})) is 
	\begin{equation}
		\mu_{\nu}(ds)= \left\{\begin{array}{lll}
			c_{d,\nu}s^{d-1}(1-s^2)^{\nu-\frac d2 }\mathbf {1}_{(0,1)}ds&&\mbox{if $\nu>\frac d2 -1$}\\
			\delta_1(ds)&&\mbox{if $\nu= \frac d2 -1$}
		\end{array}
		\right.,
	\end{equation}
	where $c_{d,\nu}>0$ is chosen so that $\mu_\nu$ is a probability measure.
\end{proposition}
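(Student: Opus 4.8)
The plan is to prove both directions of the equivalence, then compute the spectral measure. For the \emph{only if} direction, suppose $B^\nu$ exists; then $\rho_\nu$ must be a positive definite function on $\R^d$, which by Bochner's theorem (in the isotropic form (\ref{rho})) means $\rho_\nu(r)=\int_0^\infty b_d(rs)\mu(ds)$ for some finite measure $\mu$ on $\R_+$, with $b_d=\rho_{d/2-1}$ as recalled in the excerpt. The obstruction to existence when $\nu<\frac d2-1$ should come from the behavior of $\rho_\nu$ at infinity: using (\ref{boundBessel}), $\rho_\nu(r)=c_\nu J_\nu(r)/r^\nu$ decays like $r^{-\nu-1/2}$, and if $\nu<\frac d2-1$ this decay is \emph{too fast} to be realized by a superposition of the $b_d$'s (which decay like $r^{-(d-1)/2}$ and oscillate). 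I would make this precise by arguing that a nonnegative combination $\int b_d(rs)\mu(ds)$ cannot decay faster than $r^{-(d-1)/2}$ unless $\mu$ is trivial — e.g. by testing against a suitable nonnegative approximate identity in Fourier space, or more directly by invoking Schoenberg's classical characterization of radial positive-definite functions on $\R^d$ (the function $r\mapsto\Omega_d(r):=b_d(r)$ being the extreme point, and $\rho_\nu$ for $\nu<\frac d2-1$ failing to be a mixture). A cleaner route: a radial function is positive definite on $\R^d$ iff its $d$-dimensional radial Fourier transform is a nonnegative measure, and the radial Fourier transform of $\rho_\nu$ is (up to constants) $s^{d-1}(1-s^2)^{\nu-d/2}$ on $(0,1)$, which is a genuine \emph{function} only when $\nu>\frac d2-1$, a finite \emph{measure} (the Dirac mass) when $\nu=\frac d2-1$, and \emph{not} a nonnegative measure when $\nu<\frac d2-1$ because the exponent $\nu-\frac d2$ is $<-1$, making $s^{d-1}(1-s^2)^{\nu-d/2}$ non-integrable near $s=1$ and not representable as a measure.

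For the \emph{if} direction and the explicit formula, the strategy is simply to verify that the proposed $\mu_\nu$ works, i.e. that
\[
\rho_\nu(r)=\int_0^\infty b_d(rs)\,\mu_\nu(ds).
\]
When $\nu=\frac d2-1$ this is the identity $\rho_{d/2-1}(r)=b_d(r)=b_d(r\cdot 1)$, which holds by definition of $b_d$ in (\ref{bd2}); so $\mu_{d/2-1}=\delta_1$ and $B^{d/2-1}$ (the $d$-dimensional Berry model) exists. When $\nu>\frac d2-1$, I would substitute $b_d(rs)=c_d (rs)^{1-d/2}J_{d/2-1}(rs)$ and compute
\[
\int_0^1 c_d(rs)^{1-d/2}J_{d/2-1}(rs)\,c_{d,\nu}s^{d-1}(1-s^2)^{\nu-d/2}\,ds
\]
via the classical Sonine/Gegenbauer integral
\[
\int_0^1 J_\mu(rs)\,s^{\mu+1}(1-s^2)^{\beta}\,ds = 2^\beta\,\Gamma(\beta+1)\,r^{-\beta-1}\,J_{\mu+\beta+1}(r),
\]
valid for $\mu>-1$, $\beta>-1$, with here $\mu=\frac d2-1$ and $\beta=\nu-\frac d2$ (note $\beta>-1$ exactly when $\nu>\frac d2-1$, which is why the construction breaks down below that threshold). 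Collecting powers of $r$ and Gamma factors gives $\rho_\nu(r)=\text{const}\cdot r^{-\nu}J_\nu(r)$, and choosing $c_{d,\nu}$ to normalize $\mu_\nu$ to a probability measure forces the constant to be $c_\nu$, matching $\rho_\nu(0)=1$. Since $\mu_\nu$ is a probability measure on $\R_+$, formula (\ref{rho}) shows $\rho_\nu$ is the covariance of an isotropic stationary unit-variance field, and continuity of $\rho_\nu$ gives the almost sure continuity needed (by a standard Kolmogorov-type argument, or just by citing the hypotheses under which we work).

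The main obstacle I anticipate is the \emph{only if} direction — specifically, rigorously ruling out existence when $\nu<\frac d2-1$. Positive definiteness is a delicate global condition; one cannot conclude from the asymptotics of $\rho_\nu$ alone without care, since oscillatory cancellation can in principle speed up decay. The cleanest fix is to rely on the uniqueness of the inversion in Bochner/Schoenberg: if $B^\nu$ existed, its isotropic spectral measure $\mu$ would be uniquely determined by $\rho_\nu$ through (\ref{rho}), and on $(0,1)$ it would have to equal the explicit (formal) inverse $c_{d,\nu}s^{d-1}(1-s^2)^{\nu-d/2}ds$ computed above (with no mass outside $[0,1]$, since $\rho_\nu$ is the Fourier transform of a function supported in the unit ball — this is the content of the Sonine integral read backwards, together with the fact that $\widehat{\rho_\nu}$ is supported in $\{|x|\le 1\}$). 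But for $\nu<\frac d2-1$ this putative density has $\int_0^1 s^{d-1}(1-s^2)^{\nu-d/2}ds=\infty$, contradicting finiteness of $\mu$. I would phrase this via the inverse radial Fourier (Hankel) transform and the support statement, citing \cite{Schoenberg} for the radial Bochner theorem. The second, more routine, obstacle is bookkeeping the constants in the Sonine integral and in $c_\nu, c_{d,\nu}$; I would not belabor these, noting only that they are pinned down by the two normalizations $\rho_\nu(0)=1$ and $\mu_\nu(\R_+)=1$, which are consistent because $b_d(0)=1$.
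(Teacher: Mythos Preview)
Your \emph{if} direction and the identification of $\mu_\nu$ are correct. The Sonine integral you invoke is a clean alternative to the paper's route, which instead expands $b_d(rt)$ as a power series in $(rt)^2$, interchanges sum and integral by Fubini, and recognizes $\int_0^1 r^{2k}\mu_\nu(dr)$ as the $k$th moment of a $\beta(\tfrac d2,\nu-\tfrac d2+1)$ random variable to recover the series for $\rho_\nu(t)$. Both computations prove the same identity; yours is arguably more direct, while the paper's avoids quoting a special-function formula. The critical case $\nu=\tfrac d2-1$ and the continuity of the field are handled the same way in both.

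Your \emph{only if} direction, however, has a genuine gap. The step ``by uniqueness of the Bochner/Schoenberg representation, $\mu$ would have to equal the formal density $c_{d,\nu}s^{d-1}(1-s^2)^{\nu-d/2}$, which is non-integrable'' does not work as stated: uniqueness asserts that two \emph{finite} measures with the same transform coincide, but it cannot force a hypothetical finite $\mu$ to agree with an object that is not a measure in the first place. Reading the Sonine integral ``backwards'' is only legitimate in the range $\nu>\tfrac d2-1$ where it was established, so it gives no information below the threshold. To salvage your approach you would need to compute the Fourier transform of $x\mapsto\rho_\nu(|x|)$ on $\R^d$ as a tempered distribution for $\nu<\tfrac d2-1$ and show directly that it is not a nonnegative finite measure --- a true statement, but one requiring a separate (Weber--Schafheitlin type, or analytic-continuation) argument that your sketch does not supply. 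The paper sidesteps this issue entirely: it writes $\rho_\nu$ via its power-series coefficients and cites \cite[Proposition~2.2]{Golinski}, which gives a direct criterion on those coefficients ruling out positive definiteness on $\R^d$ when $\nu<\tfrac d2-1$.
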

\begin{proof} 
	Observe that $\rho_\nu$ has representation
	\[
	\rho_\nu(r)=c_{\nu}\frac{J_{\nu}(r)}{r^{\nu}}={\rm const}\times \sum_{j=0}^\infty (-1)^j \frac{\Gamma(\nu +1)(r^2/4)^j}{j!\Gamma(j+\nu+1)}.
	\]
	Combining this representation with \cite[Proposition 2.2]{Golinski}, one has that $\rho_\nu$ is not positive definite when $\nu<\frac d2 -1$, showing that the Bessel Gaussian field does not exist in this case.
	
	The statement for the critical case $\nu=\frac d2 -1$ immediately follows from (\ref{rho}) and (\ref{bd2}), after observing that $\rho_{\frac d2 -1}=b_d$. 
	
	For $\nu> \frac d2 -1$, one can actually check that $\mu_\nu$ given in the statement is the distribution of the square root of the Beta random variable $\beta(\frac d2,\nu -\frac d2+1)$. Then, using Fubini and $b_d=\rho_{\frac d2 -1}$ we have
	
	\begin{eqnarray*}
		&&\int_{0}^{1}b_d(rt)\mu_\nu(dr)={\rm const}\int_{0}^{1}\left(\sum_{k=0}^{\infty}\frac{(-1)^k\Gamma(d/2)}{\Gamma(k+d/2)k!}((rt)^2/4)^k \right)\mu_\nu(dr)\\&=&{\rm const}\sum_{k=0}^{\infty}\frac{(-1)^k\Gamma(d/2)}{\Gamma(k+d/2)k!}\left(\frac{t}{2}\right)^{2k}\int_{0}^{1}r^{2k}\mu_\nu(dr)\\&=&{\rm const}\sum_{k=0}^{\infty}\frac{\Gamma(\nu+1)(-1)^k}{\Gamma(\nu+k+1)k!}\left(\frac{t}{2}\right)^{2k}=\rho_\nu(t),
	\end{eqnarray*}
	where the second-last equality comes from the fact that the $k$-th moment of a random variable $\beta(\frac d2,\nu -\frac d2+1)$ is 
	\[
	\frac{\Gamma(d/2+k)\Gamma(\nu+1)}{\Gamma(d/2)\Gamma(\nu+k+1)}.
	\]
	This means (see (\ref{rho})) that $\rho_\nu$ defines a Bessel Gaussian field when $\nu>\frac{d}{2}-1$ and that its spectral measure is $\mu_\nu$ as defined in the statement.
	
	The fact that any Bessel Gaussian field is continuous can be proved using standard results, see e.g. \cite[Theorem 1.4.1]{Adler}.
\end{proof}
Now let us apply our Theorem \ref{thm:main} to the Bessel Gaussian field with parameter $\nu\ge \frac d2 -1$ and dimension $d\geq 2$. 

\begin{corollary}[Spectral CLT applied to $B^\nu$]
	Fix $d\geq 2$ and $\nu\ge \frac d2 -1$. Let $B^\nu=(B^\nu_x)_{x\in\R^d}$ be a Bessel Gaussian field  on $\R^d$.
	Let $\varphi:\R\to\R$ be not odd and such that $\E[\varphi(N)^2]<\infty$ with $N\sim N(0,1)$, let $R$ be the Hermite rank of $\varphi$
	and consider $Y_t$ defined by (\ref{Yt}), where $B=B^\nu$ and $D$ is compact, $Vol(D)>0$. Set $m_t=\E[Y_t]$ and $\sigma_t=\sqrt{{\rm Var}(Y_t)}>0$.
	If $R=2$, assume that $|\mathcal{F}[\mathbf{1}_{D}](x)|=O\left(\frac{1}{|x|^{d/2}}\right)$ as $|x|\rightarrow\infty$. Then, if $R=2$ or $R\ge4$, we have that $\sigma_t^2\asymp t^d w_{R,t}$ and
	$$
	\frac{Y_t-m_t}{\sigma_t}\overset{\rm law}{\to} N(0,1)\quad \mbox{as $t\to\infty$}.
	$$
\end{corollary}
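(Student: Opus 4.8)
The plan is to obtain the corollary as a combination of Theorem~\ref{thm:main} (in the cases $R=2$ and $R$ even with $R\ge4$) and the Breuer--Major Theorem~\ref{BM} (in the case $R$ odd with $R\ge5$). Every structural hypothesis on $B^\nu$ required by these results -- real-valued, continuous, centered, stationary, isotropic, unit variance -- is supplied by Proposition~\ref{thm:bessel} together with the normalisation $\rho_\nu(0)=1$, while the remaining hypotheses (that $\varphi$ is not odd, and the Fourier-decay condition on $\mathbf 1_D$ needed when $R=2$) are assumed in the statement. So the substance of the proof is: (a) checking the spectral condition~\eqref{spectralcond} for $B^\nu$, and (b) disposing of the odd case, which lies outside the range of Theorem~\ref{thm:main}.

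For step (a) I would substitute the explicit isotropic spectral measure $\mu_\nu$ from Proposition~\ref{thm:bessel} into~\eqref{spectralcond}. If $\nu=\tfrac d2-1$ then $\mu_\nu=\delta_1$ and $\int_0^\infty s^{-d/R}\mu_\nu(ds)=1<\infty$. If $\nu>\tfrac d2-1$ the integral equals a positive constant times $\int_0^1 s^{\,d-1-d/R}(1-s^2)^{\nu-d/2}\,ds$, which converges at $s=0$ because $d-1-\tfrac dR\ge d-1-\tfrac d2=\tfrac d2-1\ge0$ (using $R\ge2$ and $d\ge2$) and converges at $s=1$ because $\nu-\tfrac d2>-1$. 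Hence~\eqref{spectralcond} holds for $B^\nu$ for every $R\ge1$. In particular, when $R=2$ or when $R$ is even with $R\ge4$, Theorem~\ref{thm:main} applies directly -- its parity restriction on $(R,R')$ being vacuous for such $R$, and the $R=2$ Fourier-decay assumption being in force -- and yields both $\sigma_t^2\asymp t^dw_{R,t}$ and the Gaussian limit.

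For step (b), suppose $R$ is odd and $R\ge5$. The crucial point is that the admissible range $\nu\ge\tfrac d2-1$ already forces $\nu>\tfrac dR-\tfrac12$: indeed $\tfrac d2-\tfrac dR\ge\tfrac12$ is equivalent to $d\ge\tfrac R{R-2}$, and $\tfrac R{R-2}\le\tfrac53<2\le d$. The large-argument asymptotics of $J_\nu$ give $|\rho_\nu(r)|\le c\,(1+r)^{-\nu-1/2}$, hence $\int_{\R^d}|C(z)|^R\,dz=w_d\int_0^\infty|\rho_\nu(r)|^Rr^{d-1}\,dr<\infty$ since $R(\nu+\tfrac12)>d$; thus $C\in L^R(\R^d)$, and, $\varphi$ being non-odd, Theorem~\ref{BM} gives $\sigma^2>0$, $\sigma_t^2\asymp t^d$ and the CLT. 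It remains to match this with the claimed equivalence $\sigma_t^2\asymp t^dw_{R,t}$, i.e.\ to show $w_{R,t}\asymp1$. The upper bound $|w_{R,t}|\le\int_{\R^d}|C|^R<\infty$ is immediate; moreover $w_{R,t}\to w_R:=\int_{\R^d}C(z)^R\,dz$, and $w_R\ge0$ because ${\rm Vol}(D)\,w_R=\lim_t\tfrac1{R!}t^{-d}{\rm Var}(Y_{R,t})\ge0$. The real issue is the strict positivity $w_R>0$: for this I would use~\eqref{fourier} and a Fourier computation justified by Fubini (legitimate since $C\in L^R$ and $G$ is finite) to express $w_R$ as a positive multiple of the value at the origin of the $R$-fold convolution of the spectral measure $G$, which is a density strictly positive on the interior of its ball-shaped support, the origin being an interior point (equivalently, $w_R$ is, up to a positive constant, the return density to the origin of an $R$-step random flight whose step distribution has support with convex hull a ball about $0$).

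I expect the genuinely delicate step to be exactly this positivity $w_R>0$ in the odd case: it is what rules out an accidental cancellation that would make the $R$th chaos asymptotically negligible and thereby falsify $\sigma_t^2\asymp t^dw_{R,t}$, and it is the only place where the precise shape of $\mu_\nu$ -- rather than merely the qualitative bound~\eqref{spectralcond} -- is used in an essential way. Everything else is bookkeeping that reduces the corollary to Theorem~\ref{thm:main}, Theorem~\ref{BM} and Proposition~\ref{thm:bessel}.
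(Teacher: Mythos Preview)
Your proposal is correct and follows essentially the same route as the paper's proof: verify the spectral condition~\eqref{spectralcond} by plugging the explicit $\mu_\nu$ from Proposition~\ref{thm:bessel} into the integral, invoke Theorem~\ref{thm:main} where it applies, and fall back on Breuer--Major (Theorem~\ref{BM}) elsewhere. Your case split (Theorem~\ref{thm:main} for all even $R\ge2$, Breuer--Major for odd $R\ge5$) differs cosmetically from the paper's ($R\in\{2,4\}$ via Theorem~\ref{thm:main}, all $R\ge5$ via Breuer--Major using the Bessel asymptotics), but both are valid and the check of~\eqref{spectralcond} is the same computation.

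Where you actually go further than the paper is in flagging the positivity $w_R>0$ for \emph{odd} $R\ge5$. The paper simply writes ``the result follows by Breuer--Major Theorem~\ref{BM}'', but Theorem~\ref{BM} yields only $\sigma_t^2\asymp t^d$, and matching this with the stated conclusion $\sigma_t^2\asymp t^dw_{R,t}$ requires $w_R=\lim_t w_{R,t}\neq0$, which is automatic for even $R$ but not for odd $R$. Your Fourier/random-flight argument --- identifying $w_R$ with $(2\pi)^d$ times the density at the origin of $G^{*R}$, and observing that this density is strictly positive because the integrand $g(\lambda_1)\cdots g(\lambda_{R-1})\,g(-\lambda_1-\cdots-\lambda_{R-1})$ (or its analogue with the spherical measure in the critical case $\nu=\tfrac d2-1$, where one uses that $G^{*2}$ already has a density positive on $\{0<|x|<2\}$) is positive on an open set --- is correct and fills a point the paper leaves implicit. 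So your assessment that this is ``the genuinely delicate step'' is on target; the rest is, as you say, bookkeeping.
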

\begin{proof}
	Using standard asymptotics for Bessel functions (see e.g. \cite{krasikov}), one has 
	\[
	\rho_\nu(r)=c_\nu \frac{\cos(r-a_\nu)}{r^{\nu+1/2}}+O\left(\frac{1}{r^{\nu +3/2}}\right) \quad \mbox{as $r\rightarrow\infty$}
	\]
	for some constants  $c_\nu$ and $a_\nu$. Then, it follows that 
	\[
	\int_0^{2t}|\rho_\nu(r)|^qr^{d-1}dr=O\left( \int_1^{2t}\frac{dr}{r^{q\nu+\frac q2 +1-d}}\right) \quad  \mbox{as $t\rightarrow\infty$}.
	\]
	Since $\nu\geq \frac d2 -1$, when $R\geq 5$ we have that $|\rho|^q(r)r^{d-1}$ is integrable, and  the result follows by Breuer Major Theorem \ref{BM}. For $R=2$ and $R=4$, the result follows by a direct application of Theorem \ref{thm:main}, since the negative moment of order $d/R$ of $\mu_\nu$ exists for every $d\ge2$ and $R\ge2$. 
\end{proof}

\begin{remark}
	For simplicity, in the previous statement we did not consider all the cases which can be solved applying our Theorem \ref{thm:main}. For example, the spectral condition (\ref{spectralcond}) for $R=1$ is not verified by $\mu_\nu$ if $\nu>\frac{d}{2}-1$, but holds in the $d$-dimensional Berry case $\nu=\frac{d}{2}-1$. 
Moreover, relying on a slight variation of the proof of Proposition \ref{prop:step3}, it would have not been very difficult to also cover the non-critical cases where $\nu\in[\frac d2-1,\frac{d}{2}-\frac 12)$. We skip the details for the sake of brevity.
\end{remark}

\section*{Acknowledgments}\smallskip

We would like to thank the referee for a careful reading, constructive
remarks and useful suggestions.
L. Maini was supported by the Luxembourg National Research Fund PRIDE17/1224660/GPS. I. Nourdin was supported by the Luxembourg National Research O18/12582675/APOGee.



\bibliographystyle{plain}

\end{document}